\theoremstyle{plain}
\newtheorem{theorem}{Theorem}[section]
\newtheorem{cor}{Corollary}[section]
\theoremstyle{definition}
\newtheorem{definition}{Definition}[section] % This numbers definitions based on sections
\newtheorem{lemma}{Lemma}[section]
\newtheorem{remark}{Remark}[section]
\renewcommand{\d}{\:\! \mathrm{d}}
\long\def\symbolfootnote[#1]#2{\begingroup
\def\thefootnote{\fnsymbol{footnote}}\footnote[#1]{#2}\endgroup}
\numberwithin{equation}{section}
\begin{document}
\title[Mixed local and nonlocal equations involving singularity and nonregular data] {Existence results for mixed local and nonlocal elliptic equations involving singularity and nonregular data}
\author{Souvik Bhowmick and Sekhar Ghosh}
\address[Souvik Bhowmick]{Department of Mathematics, National Institute of Technology Calicut, Kozhikode, Kerala, India - 673601}
\email{souvikbhowmick2912@gmail.com / souvik\_p230197ma@nitc.ac.in}
\address[ Sekhar Ghosh]{Department of Mathematics, National Institute of Technology Calicut, Kozhikode, Kerala, India - 673601}
\email{sekharghosh1234@gmail.com / sekharghosh@nitc.ac.in}
%\thanks{{\em 2020 Mathematics Subject Classification: } 35R11, 35J75, 35J20}
%\keywords{Mixed operators, Singularity, {\it{duality}} solution, Weak and veryweak solution, Kato-type inequality.}
\maketitle
\begin{abstract}
In this paper, we prove the existence of weak, {\it{veryweak}} and {\it{duality}} solutions to a class of elliptic problems involving singularity and measure data which is given by: $-\Delta u+(-\Delta)^s  u = \frac{f(x)}{u^\gamma} +\mu$ in $\Omega$ with the zero Dirichlet boundary data $u=0$ in $\mathbb R^N \setminus \Omega$. The existence of weak solutions is obtained by approximating a sequence of problems for $0<\gamma\leq1$ and $\gamma>1$. We employ Schauder's fixed point theorem and embeddings of Marcinkiewicz spaces. The novelty of our work is that we prove the existence of a {\it{duality}} solution and its equivalence with weak solutions to the problem $\mathcal{L}u=\mu$. Moreover, we prove a {\it{veryweak}} maximum principle and a Kato-type inequality for the mixed local-nonlocal operator $\mathcal{L}=-\Delta +(-\Delta)^s$, which are crucial tools to guarantee the existence of {\it{veryweak}} solutions to the problem. Using a Kato-type inequality, maximum principle together with sub-super solution method, we prove the existence of {\it{veryweak}} solution for $0<\gamma<1$. Our work extends the studies due to Oliva and Petitta [ESAIM Control Optim. Calc. Var., 22(1):289--308, 2016.] and Petitta [Adv. Nonlinear Stud., 16(1):115–124, 2016.] for the mixed local-nonlocal operator.
\end{abstract}
%\tableofcontents
\textbf{Keywords:} Mixed local and nonlocal operator, Singularity, Radon Measure, Schauder's fixed point theorem, Kato-type inequality, {\it{Duality}} solution, Maximum principle.\\
\textbf{2020 Mathematics Subject Classification: } 35M12, 35J75, 35R06, 35R11, 35J20.

\section{Introduction and main results  Application}\label{sec1}
In this paper, we study the following mixed local-nonlocal elliptic equation involving singularity and measure data.
\begin{equation}
\begin{aligned}
    -\Delta u +(-\Delta)^s  u&=\frac{f(x)}{u^{\gamma}}+\mu   \text{ in } \,\,\Omega,\nonumber\\
     u&>0\,\,  \text{in} \,\,\Omega,\nonumber\\
      u &=0 \,\text{in} \, \mathbb R^N \setminus \Omega,\nonumber
   \end{aligned}
    \end{equation}
    where $\Omega\subset\mathbb{R}^N$ is a bounded domain with Lipschitz boundary $\partial \Omega$, $0<s<1$, $\gamma>0$, $N>2s$, $0<f\in L^m(\Omega)$ for $m>1$ and $\mu$ is a nonnegative, bounded Radon measure. Here the operator $-\Delta +(-\Delta)^s$ is the mixed local-nonlocal elliptic operator, where $(-\Delta)^s$ is the standard fractional Laplacian defined as 
   \begin{equation*}
       (-\Delta)^s u(x)=C_{N,s}\int_{\mathbb R^N} \frac{u(x)-u(y)}{|x-y|^{N+2s}} \d y.
   \end{equation*}
   The above integral is defined as the principle value and $C_{N,s}$ is a normalising constant.

For a generation of researchers studying elliptic PDEs involving singularity the study due to Crandall {\it{et al.}}\cite{CRT1977} and Lazer and Mckenna \cite{LM1991} still remains one of the pioneering works. In \cite{CRT1977}, the authors proved the existence of a solution whereas in \cite{LM1991}, the existence of a $H_0^1$ and $C^1$ solution is established for the following problem 
    \begin{equation}\label{eq1.1 A}
    \begin{aligned}
        -\Delta u & =\frac{f(x)}{u^\gamma}   \text{ in } \,\,\Omega,\\
     u&>0\,\,  \text{in} \,\,\Omega,\\
    u & = 0  \text { on } \partial \Omega,
    \end{aligned}
    \end{equation}
    where $\gamma>0$, $\Omega\subset\mathbb R^N$ with Lipschitz boundary $\partial \Omega$ and $f$ is a nonnegative function over $\Omega$. The celebrated result of \cite{LM1991} is that if $f(x)\in C^\alpha(\bar \Omega)$, $0<\alpha<1$, then one can obtain a unique solution (classical) $u\in C^{2+\alpha}(\Omega) \cap C(\bar \Omega)$ by constructing appropriate subsolution and supersolution to it. Moreover, $u\in H_0^1(\Omega)$ if and only if $\gamma<3$ and if $\gamma>1$ then $u$ fails to be in $C^1(\bar \Omega)$. Boccardo and Orsina \cite{BO2010} impelled the variational tools to obtain the existence and uniqueness results on approximating problems. Yijing and Duanzhi \cite{YD2014} proved that whenever $f\in L^1(\Omega)$ and $\gamma>1$, the problem \eqref{eq1.1 A} admits a unique $H^1_0(\Omega)$ solution if and only if $\int_\Omega fu^{1-\gamma}\d x < +\infty$ for some there $u\in H^1_0(\Omega)$. Furthermore, if $\gamma \geq 3$ then $\int_\Omega u^{1-\gamma}\d x = +\infty$ for all $u\in H_0^1(\Omega)$. Recently, Oliva and Petitta presented an excellent review on the progress of singular problems involving Laplacian \cite{OP2024}. For the existence result of a singular problem involving the $p$-Laplacian, we refer to Canino {\it{et al.}} \cite{CST2016}.

     The nonlocal counterpart to the problem \eqref{eq1.1 A} is given by 
    \begin{equation}\label{eq1.2 B}
    \begin{aligned}
       (-\Delta)^s u & =\frac{f(x)}{u^\gamma}   \text{ in } \,\,\Omega,\\
     u&>0\,\,  \text{in} \,\,\Omega,\\
    u & =0 \text{ on } \mathbb R^N \setminus \Omega,
    \end{aligned}
    \end{equation}
    where $\gamma >0$, $0<s<1$, $\Omega\subset\mathbb R^N$  is bounded domain with Lipschitz boundary $\partial \Omega$ and $f$ is a nonnegative on $\Omega$. Tai and Fang \cite{F2014} studied the problem \eqref{eq1.2 B} when $f(x)=1$ which has further been investigated by Youssfi and Mahmoud \cite{YM2021}. We refer the reader to Canino {\it{et al.}} \cite{CMSS2017} for the existence result for the nonlinear operator, namely the fractional $p$-Laplacian. In \cite{CMSS2017}, the authors established the result by dividing the singular exponent $\gamma$ into three favourable cases, which are $0<\gamma<1,\,\gamma=1$ and $\gamma>1$. Afterwards, the study of singular PDEs involving elliptic operators evolved significantly. It is almost impossible to enlist them here. For a well-acquainted development in the existence theory in the literature on singular problems, we refer to  \cite{T1979, AGM1998, CD2004, C2013, AM2014, CS2016, GMM2017, PGC2017, OP2018, G2018, H2003, BBMP2015, AGS2018, S2017, SGC2019} and the references cited therein.

Another, important phenomena in the study of PDEs occur when the given data is a measure. In general, the existence of a weak solution fail even in the case of a Dirac measure $\delta_a$. The result is addressed in Brezis {\it{et al.}} \cite{BMP2007}, where they considered the following PDE involving measure,
    \begin{align}\label{eq1.3 C}
    -\Delta u + g(x,u) &= \mu ~ \,\, \text{in} \,\,\, \Omega,\\ \nonumber
    u & = 0 ~ \,\, \text{on} \,\,\, \partial\Omega,
\end{align}
where $\Omega \subset \mathbb R^N$ be a bounded domain with smooth boundary and $\mu$ is a nonnegative bounded Radon measure. In \cite{BMP2007}, the authors proved that for $g(x,u)=|u|^{p-1}u$, the problem \eqref{eq1.3 C} has a unique weak solution in $W_0^{1,q}(\Omega)$ for $q<\frac{N}{N-1}.$ Moreover,  if $N \geq 3$ and $ p \geq \frac{N}{N-2}$ then there does not exists any solution even for $\mu=\delta_a$ and if $N \geq 2$ and $p < \frac{N}{N-2}$ then there always exists a solution for any nonnegative measure $\mu$. For $g(x,u)=0$, the problem \eqref{eq1.3 C} is studied by Ponce \cite{A2016}, where $\mu$ is a finite Borel measure. For the nonlinear case with the $p$-Laplacian, we refer to Kilpel{\"a}inen \cite{K2002}. 
 
The following nonlocal problem with measure data is studied by Petitta \cite{P2016}.
\begin{align}\label{eq1.3 D}
    (-\Delta)^s u &= \mu ~ \,\, \text{in} \,\,\, \Omega,\\ \nonumber
    u & = 0 ~ \,\, \text{on} \,\,\, \partial\Omega,
\end{align}
where $\Omega \subset \mathbb R^N$ be a bounded domain with smooth boundary, $0<s<1$ and $\mu$ is a nonnegative bounded Radon measure in $\Omega$. In \cite{P2016}, the author established the existence of duality solution and its equivalence with the weak solution to \eqref{eq1.3 D}. The study of a {\it{duality}} solution to \eqref{eq1.3 D} when $\Omega=\mathbb{R}^N$ and $\frac{1}{2}<s<1$, can be found in Karlsen {\it{et al.}} \cite{KPU2011}. For the existence of a weak solution involving the fractional $p$-Laplacian, we refer to  Abdellaoui {\it{et al.}} \cite{AAB2016}. For a well-presented development and related pioneering contribution, we refer to \cite{MV2014, V2004, KMS2015, BMP2007, P2016, KPU2011, A2016, K2002, DMOP1999, AAB2016} and the references cited therein.

\par We now turn our attention to the existence theory of elliptic problems involving both singularity and measure data. The following Dirichlet problem involving a singular term and measure data was investigated by Oliva and Petitta \cite{OP2016}.
\begin{equation}\label{eq1.4 D}
    \begin{aligned}
        -\Delta u & =\frac{f(x)}{u^\gamma}+\mu   \text{ in } \,\,\Omega,\\
     u&>0\,\,  \text{in} \,\,\Omega,\\
    u & = 0  \text { on } \partial \Omega,
    \end{aligned}
    \end{equation}
    where $\gamma> 0$, $\Omega \subset \mathbb{R}^N$ be a open Lipschitz bounded domain, $f$ is a nonnegative function and $\mu$ is a nonnegative, bounded Radon measure. In \cite{OP2016}, the authors first considered a sequence of problems and proved the existence of a weak solution to \eqref{eq1.4 D} for $0<\gamma\leq1$ and $\gamma>1$ by using the Schauder's fixed point theorem. Moreover, with the help of a Kato-type inequality and sub-super solution method, they guaranteed the existence of a $\it{veryweak}$ solution. The nonlocal counterpart of the work due to Oliva and Petitta \cite{OP2016} is studied by Ghosh {\it{et al.}} \cite{GCG2019} in the nonlocal setting. In \cite{GCG2019}, the authors considered the following problem.
    In the nonlocal case,
    \begin{equation}\label{eq1.5 E}
    \begin{aligned}
       (-\Delta)^s u & =\frac{f(x)}{u^\gamma}+\mu  \text{ in } \,\,\Omega,\\
     u&>0\,\,  \text{in} \,\,\Omega,\\
    u & =0 \text{ on } \mathbb R^N \setminus \Omega,
    \end{aligned}
    \end{equation}
    where $\gamma >0$, $0<s<1$, $\Omega\subset\mathbb R^N$ is bounded domain with Lipschitz boundary $\partial \Omega$, $f$ is a nonnegative function and $\mu$ is a nonnegative, bounded Radon measure. On using the weak convergence method on an appropriate sequence of PDEs, the authors \cite{GCG2019} guaranteed the existence of a weak solution to the problem \eqref{eq1.5 E} for $0<\gamma\leq1$ and $\gamma>1$. Further, they proved a nonlocal Kato-type inequality and employed it with the sub-super solution method to establish the existence of a $\it{veryweak}$ solution. For further studies in this direction, we refer to \cite{BH2021, OP2016, OP2024, PGC2017, GCG2019} and the references therein.

    \par Recently, elliptic PDEs involving mixed local and nonlocal elliptic operators have received a lot of attention. Such problems arise in the biological modelling viz. population dynamics \cite{DV2021}, Brownian motion and L\'evy process \cite{DPV2023}. For a detailed study of the development from an application point of view, we refer to \cite{DV2021, DPV2023, BDVV2022} and the references therein. Consider the following problem involving both the local and nonlocal operators. 
    \begin{equation}\label{eq1.6 F}
             \begin{aligned}
                 -\Delta_p u + (-\Delta_p)^s u & = \frac{f(x)}{u^\gamma}   \text{ in } \,\,\Omega,\\
     u&>0\,\,  \text{in} \,\,\Omega,\\
    u &=0 \text{ on } \mathbb R^N \setminus \Omega,
                 \end{aligned}
         \end{equation}  
         where $\Omega\subset\mathbb R^N$ is bounded domain with Lipschitz boundary $\partial \Omega$, $0<s<1<p<\infty$, $f$ is a nonnegative function and $\mu$ is a nonnegative, bounded Radon measure. For $p=2$, Arora and R{\u{a}}dulescu \cite{AR2023} studied the existence and nonexistence of solutions to the problem \eqref{eq1.6 F}. Garain and Ukhlov \cite{GU2022} investigated the existence result of the problem \eqref{eq1.6 F} for $1<p<\infty$. For a comprehensive list of references on the study of mixed local-nonlocal operators, we refer to \cite{BV2023, DFV2024, G2023, BDVV2021, DM2022, GK2022, LGG2024} and references cited therein.

Before proceeding further, we would now like to present the following table of references to the readers which led to the consideration of the problem.

\begin{table}[H]
    \centering
\begin{tabular}{|c|c|c|c|}
 \hline
 Operators $(\mathcal{L})$ & Singularity & Measure data & Singularly    \\
  & &&and measure\\
  [1.0ex]
  \hline
 $(-\Delta_p)$ &\, \cite{LM1991, CST2016, CRT1977} &\, \cite{A2016, BMP2007, K2002, DMOP1999} &\, \cite{OP2016, PGC2017} \\
 &&&\\
 [1.0ex]
 \hline
 $(-\Delta_p)^s$ & \cite{F2014, CMSS2017, OP2018} & \cite{P2016, KMS2015, AAB2016, KPU2011} &\cite{GCG2019, BH2021}\\
 &&& \\
 [1.0ex]
 \hline
  $(-\Delta_p)+(-\Delta_p)^s$ & \cite{AR2023, GU2022} & \cite{CSYZ2024, BS2023} &  --\\
  &&&
  \\[1.0ex]
 \hline
\end{tabular}
\caption{}\label{table:1}
\end{table}

\iffalse

\begin{table}[H]
    \centering
\begin{tabular}{|c|c|c|c|c|}
 \hline
 Operators $(\mathcal{L})$ & Singularity & Measure data & Singularly   & Singularity and \\
  & &&and measure& Other nonlinearity \\
  [1.0ex]
  \hline
 $(-\Delta_p)$ &\, \cite{LM1991, CST2016, CRT1977} &\, \cite{A2016, BMP2007, K2002, DMOP1999} &\, \cite{OP2016, PGC2017} &\,\cite{H2003, BG2020} and the\\
 &&&&references therein.\\
 [1.0ex]
 \hline
 $(-\Delta_p)^s$ & \cite{F2014, CMSS2017, OP2018} & \cite{P2016, KMS2015, AAB2016, KPU2011} &\cite{GCG2019, BH2021}& \cite{S2017, SGC2019} and the\\
 &&&&references therein. \\
 [1.0ex]
 \hline
  $(-\Delta_p)+(-\Delta_p)^s$ & \cite{AR2023, GU2022} & \cite{CSYZ2024, BS2023} &  --& \cite{DFV2024, G2023} and the\\
  &&&&references therein.
  \\[1.0ex]
 \hline
\end{tabular}
\caption{1}\label{table:2}
\end{table}

\fi

\par Motivated by the above-mentioned studies, we consider the following Dirichlet problem involving the mixed local-nonlocal linear operator with singularity and measure data. 
         \begin{equation}
          \begin{aligned}\label{eq2:problem}
   -\Delta u+(-\Delta)^s u &= \frac{f(x)}{u^\gamma} +\mu   \text{ in } \,\,\Omega,\\
     u&>0\,\,  \text{in} \,\,\Omega,\\
  u&=0\,\, \text{in}\,\,\mathbb  R^N \setminus \Omega,
   \end{aligned}\tag{P}
\end{equation}
where $\Omega\subset\mathbb R^N$ is a bounded domain with Lipschitz boundary $\partial \Omega$, $N \geq 2$, $0 < \gamma $, $0<s<1$, $f$ is a nonnegative function and $\mu$ is a nonnegative, bounded Radon measure. We first prove the existence of a weak solution to the problem \eqref{eq2:problem} by dividing the singular exponent into two cases: $0<\gamma\leq1$ and $\gamma>1$. We employ Schauder's fixed point theorem and the weak convergence method to establish the existence result. The novelty of our work is that we prove the existence of a {\it{duality}} solution and its equivalence with weak solutions. Further, proving a {\it{veryweak}} maximum principle and a Kato-type inequality for the mixed local-nonlocal operator, we prove the existence of {\it{veryweak}} solutions to the problem \eqref{eq2:problem}. In our forthcoming paper, we will study the nonlinear extension to the problem \eqref{eq2:problem}, i.e. when $p\in(1,\infty)$.\\

\noindent We now state the first result of our work.
\begin{theorem}\label{t1.1}
Let $\Omega\subset\mathbb{R}^N$ be a bounded domain with Lipschitz boundary $\partial\Omega$, $s\in(0,1)$ and let $0<f\in L^1(\Omega)$. When $0<\gamma \leq 1$, there exists a weak solution $u\in X_0^{s,q}(\Omega)$ to the problem \eqref{eq2:problem} for every $ q < \frac{N}{N-1}$. If $\gamma > 1$, there exists a weak solution $u\in X_{loc}^{s,q}(\Omega)$ to the problem \eqref{eq2:problem} for every $ q < \frac{N}{N-1}$. (Refer to Section \ref{sec3}).
\end{theorem}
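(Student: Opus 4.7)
The plan is to build the weak solution to \eqref{eq2:problem} as the limit of a suitable sequence of non-singular, bounded-data approximating problems, following the scheme pioneered by Boccardo--Orsina and adapted by Oliva--Petitta and Ghosh et al. to the mixed operator $\mathcal{L}=-\Delta+(-\Delta)^s$. Concretely, I would fix $n\in\mathbb{N}$, set $f_n:=T_n(f)\in L^\infty(\Omega)$, pick a sequence of smooth densities $\mu_n\in L^\infty(\Omega)$ with $\mu_n\rightharpoonup\mu$ narrowly and $\|\mu_n\|_{L^1}\le \|\mu\|_{\mathcal{M}}$, and regularise the singularity by replacing $u^{-\gamma}$ with $(|u|+\tfrac{1}{n})^{-\gamma}$. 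Then one considers
\begin{equation*}
-\Delta u_n+(-\Delta)^s u_n=\frac{f_n(x)}{(u_n^+ + \tfrac{1}{n})^\gamma}+\mu_n\text{ in }\Omega,\qquad u_n=0\text{ in }\mathbb{R}^N\setminus\Omega.
\end{equation*}

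For fixed $n$, existence of $u_n$ follows from Schauder's fixed point theorem applied to the operator $S_n:L^2(\Omega)\to L^2(\Omega)$, $S_n(v)=w$, where $w$ solves $\mathcal{L}w=f_n/(v^++\tfrac{1}{n})^\gamma+\mu_n$; the right-hand side lies in $L^\infty(\Omega)$, so $w\in H_0^1(\Omega)\cap L^\infty(\Omega)$ by the standard $L^\infty$-estimate for $\mathcal{L}$, and the inclusion $H_0^1\hookrightarrow\hookrightarrow L^2$ gives compactness of $S_n$; an a-priori ball in $L^2$ invariant under $S_n$ is produced by testing with $w$ itself and using the $\frac{1}{n}$-regularisation to bound $f_n/(v^++\tfrac{1}{n})^\gamma$ uniformly in $v$ by $n^\gamma\|f\|_\infty$. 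A weak maximum principle for $\mathcal{L}$ then yields $u_n\ge 0$, and a strong minimum principle (together with $f_n\ge 0$ and $\mu_n\ge 0$) gives $u_n>0$ in $\Omega$ with a lower bound $u_n\ge c_\omega>0$ on every $\omega\Subset\Omega$, uniformly in $n$; this is the workhorse that eventually tames the singular term on compact subsets.

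Next comes the crucial uniform estimate. Testing with $T_k(u_n)$ and with powers $(T_k(u_n))^{\alpha}$ in the Stampacchia style, using the natural bilinear form associated with $\mathcal{L}$
\begin{equation*}
\langle \mathcal{L}u,v\rangle=\int_\Omega\nabla u\cdot\nabla v\,\d x+\frac{C_{N,s}}{2}\iint_{\mathbb{R}^{2N}}\frac{(u(x)-u(y))(v(x)-v(y))}{|x-y|^{N+2s}}\,\d x\,\d y,
\end{equation*}
and exploiting the nonnegativity of the singular term together with $\mu_n\ge 0$, I obtain that $u_n$ is bounded in the Marcinkiewicz space $\mathcal{M}^{\frac{N}{N-2}}(\Omega)$ and $|\nabla u_n|$ in $\mathcal{M}^{\frac{N}{N-1}}(\Omega)$; combined with the analogous control of the Gagliardo seminorm $[u_n]_{s,q}$ (obtained by an independent truncation argument on the nonlocal part and the embedding of Marcinkiewicz into $L^q$ for $q$ strictly below the exponent), this yields a uniform bound in $X_0^{s,q}(\Omega)$ for every $q<\frac{N}{N-1}$ whenever $0<\gamma\le 1$. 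For $\gamma>1$ this global estimate fails near $\partial\Omega$; one recovers it only on compact subsets by testing with $(u_n^{\gamma}-c_\omega^{\gamma})_+\varphi$ for $\varphi\in C_c^\infty(\Omega)$ with $\varphi\equiv 1$ on $\omega\Subset\Omega$, and controlling the boundary terms through the uniform lower bound on $\omega$; this gives a bound in $X^{s,q}_{loc}(\Omega)$.

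Finally, one passes to the limit. By the uniform estimates, up to a subsequence $u_n\rightharpoonup u$ in $X_0^{s,q}$ (resp.\ $X_{loc}^{s,q}$) and $u_n\to u$ pointwise a.e.; the pointwise lower bound $u\ge c_\omega>0$ on $\omega$ transfers to the limit. The diffusion terms pass to the limit by weak convergence; the measure term passes by the narrow convergence $\mu_n\rightharpoonup\mu$ (test functions in $C_c^\infty(\Omega)$ suffice); the main delicate point is the singular term $f_n/(u_n+\tfrac{1}{n})^\gamma\to f/u^\gamma$, which I would handle by dominated convergence on each $\omega\Subset\Omega$ using $f_n\le f\in L^1$ and the uniform bound $(u_n+\tfrac{1}{n})^{-\gamma}\le c_\omega^{-\gamma}$, together with the fact that admissible test functions are compactly supported in $\Omega$. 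The step I expect to be the main obstacle is obtaining the uniform $X^{s,q}_{(\mathrm{loc})}$ estimate for $\gamma>1$: the interplay between the local and nonlocal parts of $\mathcal{L}$ under truncations of the form $(u_n^{\gamma}-c^\gamma)_+$ is subtle, because the fractional part does not commute cleanly with truncations, and one must carefully bound the resulting mixed tail terms using the uniform $L^1$ control of the right-hand side together with a Kato-type inequality for the nonlocal operator.
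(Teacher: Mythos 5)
Your overall scheme (approximation, Schauder for existence of $u_n$, maximum/strong minimum principles, uniform Marcinkiewicz estimates, pass to the limit) agrees with the paper's Section~\ref{sec3} for the full range of $\gamma$, and your treatment of the case $0<\gamma\le 1$ is essentially the paper's proof of Lemma~\ref{lmn3.1} and Theorem~\ref{thm3.1<1}. The place where you diverge -- and where, as you yourself flag, the argument is incomplete -- is the $\gamma>1$ case.

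First, you have not addressed the global-in-$\Omega$ regularity that the Definition~\ref{d weak} of a weak solution for $\gamma>1$ actually requires, namely $T_k^{(\gamma+1)/2}(u)\in X_0^{s,2}(\Omega)$ for every $k>0$. The paper establishes this in Lemma~\ref{lmn3.2>1} by choosing $\phi=T_k^{\gamma}(u_n)$ as test function, using the nonnegativity of the nonlocal part, the algebraic identity $\nabla u_n\cdot\nabla T_k^\gamma(u_n)=\frac{4\gamma}{(\gamma+1)^2}\left|\nabla T_k^{(\gamma+1)/2}(u_n)\right|^2$, and then \eqref{sg-emb2} to transfer the gradient bound to the Gagliardo seminorm. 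Without this step you produce, at best, a distributional limit that does not satisfy the stated notion of solution.

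Second, your proposed local test function $(u_n^{\gamma}-c_\omega^{\gamma})_+\varphi$ for the $X^{s,q}_{loc}$ estimate is not going to work as written. It is not a bounded test function (it grows like $u_n^\gamma$), the gradient cross-term $\nabla\varphi\,(u_n^\gamma-c_\omega^\gamma)_+$ is not controlled by any quantity you already have, and, more seriously, the nonlocal tail produced by multiplying by the cutoff $\varphi$ cannot be absorbed by the lower bound $u_n\ge c_\omega$ alone -- you acknowledge this yourself but do not resolve it. The paper sidesteps the difficulty entirely by splitting $u_n=T_1(u_n)+G_1(u_n)$ (Lemma~\ref{lmn3.3>1}): $G_1(u_n)$ is estimated \emph{globally} in $X_0^{s,q}(\Omega)$ by testing with $T_k(G_1(u_n))$, exactly as in the $\gamma\le1$ case, since the singular term is no longer singular where $u_n>1$; while $T_1(u_n)$ is bounded in $L^\infty$ by construction and the local gradient/Gagliardo estimate follows from testing with $T_1^\gamma(u_n)$ together with the lower bound $u_n\ge C_K>0$ on $K\Subset\Omega$, with no cutoff function and hence no tail term to manage. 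You should either adopt this decomposition or prove explicitly how the tail terms generated by $(u_n^\gamma-c_\omega^\gamma)_+\varphi$ are absorbed; as stated, the key estimate for $\gamma>1$ is missing.

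A minor remark: your claim of an ``independent truncation argument on the nonlocal part'' to control $[u_n]_{s,q}$ for $0<\gamma\le1$ is unnecessary; the single test function $T_k(u_n)$ already controls both $\int_\Omega|\nabla T_k(u_n)|^2$ and $\int_\Omega|(-\Delta)^{s/2}T_k(u_n)|^2$ simultaneously (see \eqref{eq3.4 BTN}), and then the paper runs the Marcinkiewicz argument in parallel for $\nabla u_n$ and $(-\Delta)^{s/2}u_n$.
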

Our next aim is to obtain the existence of {\it{veryweak}} solutions to the problem \eqref{eq2:problem}. We first prove the existence of {\it{veryweak}} solutions to the problem \eqref{eq1.3E} by proving its equivalence with {\it{duality}} solutions for the following problem.  
\begin{align}\label{eq1.3E}
   (-\Delta)u+(-\Delta)^s u &= \mu ~ \,\, \text{in} \,\,\, \Omega,\\ \nonumber
    u & = 0 ~ \,\, \text{on} \,\,\, \partial\Omega,
\end{align}
where $\Omega \subset \mathbb R^N$ is a bounded domain with Lipschitz boundary $\partial\Omega$. The literature for the problem \eqref{eq1.3E} is limited. Byun and Song \cite{BS2023} investigate the existence of weak solutions taking $\mu$ as a signed  Borel measure with a finite total mass on $\mathbb R^N$. When $\mu$ is a bounded Borel measure, the existence is established in \cite{CSYZ2024}. To the best of our knowledge, the existence of {\it{duality}} solutions and their equivalence with the weak solutions to the problem \eqref{eq1.3E} is not available in the literature. We now state our second result of this work.
\begin{theorem}\label{t1.2}
 Let $\Omega\subset\mathbb{R}^N$ be a bounded domain with Lipschitz boundary $\partial\Omega$ and $s\in(0,1)$, $\gamma\in(0,1)$. Then there exists a unique $\it{duality~solution}$ $u\in L^1(\Omega)$ to the problem \eqref{eq1.3E} in the sense of Definition \ref{dual def}. In addition, $u\in L^q(\Omega)$ for $q\in(1,\frac{N}{N-2})$ if $s\in(0,\frac{1}{2}]$ and $q\in(\frac{N}{N-2s+1},\frac{N}{N-1})$ if $s\in(\frac{1}{2},1).$ Moreover, every {\it{duality}} solution to the problem \eqref{eq1.3E} is a weak solution to the problem \eqref{eq1.3E} and conversely. (See Section \ref{sec4}).
\end{theorem}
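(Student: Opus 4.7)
The plan is to treat the singular nonlinearity with measure datum in the regime $\gamma\in(0,1)$ via a combination of approximation, uniform Marcinkiewicz estimates, and a duality identity that exploits the self-adjointness of $\mathcal{L}=-\Delta+(-\Delta)^s$. First, I would consider the regularized family
$$\mathcal{L}u_n=\frac{f(x)}{(u_n+\tfrac{1}{n})^\gamma}+\mu_n,\qquad u_n=0\text{ on }\mathbb R^N\setminus\Omega,$$
with $\mu_n\in L^\infty(\Omega)$ a smooth approximation of $\mu$ satisfying $\mu_n\rightharpoonup\mu$ in the sense of measures and $\|\mu_n\|_{L^1}\le\|\mu\|$. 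Existence of each bounded weak solution $u_n$ follows from a fixed-point argument parallel to the proof of Theorem \ref{t1.1}. A comparison with the solution of $\mathcal{L}\omega=f\chi_{\{f\le 1\}}$ supplies a uniform local lower bound $u_n\ge c_K\,\omega>0$ on every compact $K\Subset\Omega$, ensuring that the singular term remains finite in the limit.

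The second step is to build uniform $L^q$ bounds. Testing with the truncations $T_k(u_n)$ and running a Stampacchia-type argument, I expect $u_n$ to lie uniformly in a Marcinkiewicz space: when $s\le\tfrac12$ the local gradient term drives the regularity and one lands in $M^{\frac{N}{N-2}}(\Omega)$, while for $s>\tfrac12$ the nonlocal tail of $(-\Delta)^s$ improves the decay and gives the finer space $M^{\frac{N}{N-2s+1}}(\Omega)$. Embedding these Marcinkiewicz spaces into $L^q(\Omega)$ yields exactly the ranges in the statement. The hypothesis $\gamma<1$ enters here by making $f(x)/u_n^\gamma$ uniformly equi-integrable on compact subsets (thanks to $f\in L^1$ and the local positivity of Step~1), so the full right-hand side is uniformly bounded as a measure.

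Next I would pass to the limit. Up to a subsequence, $u_n\to u$ a.e.\ and in $L^q(\Omega)$, and dominated/Fatou convergence identifies the limit of the singular term as $f(x)/u^\gamma$ in $L^1_{\mathrm{loc}}(\Omega)$. For the duality identity, fix $g\in L^\infty(\Omega)$ and let $v_g$ solve the adjoint problem $\mathcal{L}v_g=g$ in $\Omega$, $v_g=0$ on $\mathbb R^N\setminus\Omega$. Using the self-adjointness of $\mathcal{L}$ together with the known regularity $v_g\in C(\overline{\Omega})\cap L^\infty(\Omega)$, I can test the approximating equation against $v_g$ and pass to the limit to obtain
$$\int_\Omega u\,g\,\d x=\int_\Omega v_g\,\frac{f}{u^\gamma}\d x+\int_\Omega v_g\,\d\mu,$$
which is precisely Definition \ref{dual def}. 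For the equivalence, weak $\Rightarrow$ duality follows by pairing the weak formulation against $v_g$, while duality $\Rightarrow$ weak is obtained by taking $\varphi\in C_c^\infty(\Omega)$ and setting $g=\mathcal{L}\varphi$ so that $v_g=\varphi$. Uniqueness follows by subtracting two duality identities and selecting $g=\mathrm{sgn}(u_1-u_2)$, which forces $\|u_1-u_2\|_{L^1}=0$.

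The main obstacle I anticipate is the admissibility of the adjoint $v_g$ as a test function against the singular term: one needs $v_g$ bounded up to $\partial\Omega$ together with a uniform $L^1_{\mathrm{loc}}$ control on $f/u_n^\gamma$ inherited from Step~1, and for this the interaction of the local and nonlocal pieces near the boundary must be handled carefully. Equally delicate is pinning down the sharp $L^q$ threshold across the two regimes $s\le\tfrac12$ and $s>\tfrac12$, which requires tracking whether the classical Calder\'on--Zygmund scale for $-\Delta$ or the fractional tail scale for $(-\Delta)^s$ dominates the Marcinkiewicz estimate\textemdash a genuine interpolation between two distinct scales characteristic of the mixed operator $\mathcal{L}$, and the place where the singular parameter $\gamma\in(0,1)$ and the measure datum $\mu$ must be balanced simultaneously.
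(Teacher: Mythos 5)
There is a genuine mismatch between what you prove and what Theorem \ref{t1.2} asserts. The theorem concerns the \emph{linear} measure-data problem \eqref{eq1.3E}, $-\Delta u+(-\Delta)^s u=\mu$, with no singular term (the hypothesis $\gamma\in(0,1)$ is inert here); accordingly, Definition \ref{dual def} requires $\int_\Omega ug=\int_\Omega w\,\d\mu$ for $g\in C_c^\infty(\Omega)$ and $\mathcal{L}w=g$. Your regularized family carries the term $f/(u_n+1/n)^\gamma$ and your limiting identity is $\int_\Omega ug=\int_\Omega v_g f u^{-\gamma}+\int_\Omega v_g\,\d\mu$, which is a duality formulation for problem \eqref{eq2:problem}, not the one being claimed. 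The paper's actual proof is much shorter and purely functional-analytic: the map $T(g)=\int_\Omega w\,\d\mu$ is a bounded linear functional on $L^p(\Omega)$ because the adjoint solution $w$ satisfies $\|w\|_{L^\infty}\leq C\|g\|_{L^p}$ for $p>\tfrac{N}{2}$ (via \cite[Theorem 4.7]{BDVV2022}), with the additional restriction $p\in(N,\tfrac{N}{2s-1})$ when $s>\tfrac12$ coming from the $W^{2,p}$ theory of \cite[Theorem 4.1]{SVWZ2023}; Riesz representation then yields a unique $u\in L^{p'}(\Omega)$. The stated $q$-ranges are exactly the conjugate exponents of the admissible $p$'s for the \emph{adjoint} problem. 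They do not arise from a priori estimates on approximating solutions.

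This exposes the second gap: your claimed Marcinkiewicz dichotomy is not substantiated and points the wrong way. Since $N-2s+1>N-2$, the space $M^{\frac{N}{N-2s+1}}$ is \emph{larger} than $M^{\frac{N}{N-2}}$, not finer; moreover a Marcinkiewicz bound $u\in M^{r}$ delivers $L^q$ for all $q<r$, so it can never produce an interval of the form $(\tfrac{N}{N-2s+1},\tfrac{N}{N-1})$ bounded away from $1$. The paper's own truncation argument (Lemma \ref{lmn3.1}) puts $u_n$ in $M^{\frac{N}{N-2}}(\Omega)$ uniformly in $s$, so that route cannot detect the $s>\tfrac12$ restriction at all. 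Finally, in your ``duality $\Rightarrow$ weak'' step, $g=\mathcal{L}\varphi$ for $\varphi\in C_c^\infty(\Omega)$ is not compactly supported (the fractional Laplacian has a nonlocal tail), so it is not admissible in Definition \ref{dual def} without first extending the test class by density; and even then the identity $\int_\Omega u\,\mathcal{L}\varphi=\int_\Omega\varphi\,\d\mu$ is only the \emph{veryweak} formulation, whereas the weak formulation requires $\nabla u$ and $(-\Delta)^{s/2}u$ to exist. The paper sidesteps this by taking a weak solution known to exist from \cite{BS2023,CSYZ2024}, checking it is a duality solution via integration by parts, and invoking uniqueness of duality solutions to identify the two.
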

It is noteworthy to mention here that the results of Theorem \ref{t1.2} hold even if we replace $\mu$ by $f+\mu$, where $f\in L^1(\Omega)$. We now proceed to the final result of the paper, that is we will prove the existence of a {\it{veryweak}} solution to the problem \eqref{eq2:problem}. We follow \cite{OP2016}, where the authors obtained the existence of solutions by applying the sub-super solution method combined with a Kato-type inequality \cite{BMP2007}. For the fractional Kato-type inequality, we refer \cite{LPPS2015}. The main difficulty in our case is the nonavailability of Kato-type inequality. We have proved a Kato-type inequality for the mixed local-nonlocal operator (see Theorem \ref{thm6.1}, \eqref{keto inq 2}). Further, we proved a {\it{veryweak}} maximum principle (see Lemma \ref{eq6.15A}) for the mixed local and nonlocal operator $ -\Delta +(-\Delta)^s$.
The weak maximum principle is established by Biagi et al. \cite[Theorem 1.2]{BDVV2022}. The next result states the existence of {\it{veryweak}} solution to the problem \eqref{eq2:problem}.
\begin{theorem}\label{t1.3}
       Let $\Omega\subset\mathbb{R}^N$ be a bounded domain with Lipschitz boundary $\partial\Omega$ and $s\in(0,1)$, $\gamma\in(0,1)$. Let $f>0$ in $\Bar{\Omega}$ such that $f\in C^{\beta}(\Bar{\Omega})$, then the problem \eqref{eq2:problem} possesses a $\it{veryweak}$ solution in the sense of the Definition \ref{def5.1 VW}. In addition, if $0<f\in L^1(\Omega)\cap L^\infty(\Omega_\eta)$, then the problem \eqref{eq2:problem} possesses a $\it{veryweak}$ solution in the sense of the Definition \ref{def5.1 VW}. (Refer to Section \ref{sec5}). 
\end{theorem}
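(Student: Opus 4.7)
The plan is to follow the Oliva--Petitta scheme \cite{OP2016} in the mixed local--nonlocal setting, splitting the right-hand side into its singular and measure parts, and gluing a subsolution of the singular problem with a supersolution built from the duality solution of the measure problem, then closing the argument by the Kato-type inequality (Theorem \ref{thm6.1}) together with the \emph{veryweak} maximum principle (Lemma \ref{eq6.15A}).

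First I would construct the subsolution. Applying Theorem \ref{t1.1} with $\mu\equiv 0$ (and using that $\gamma\in(0,1)$ places us in the regular regime where the solution has full Sobolev regularity), obtain a positive weak solution $\underline{u}\in X_0^{s,q}(\Omega)$ of
\begin{equation*}
-\Delta \underline{u}+(-\Delta)^s \underline{u}=\frac{f(x)}{\underline{u}^{\gamma}}\quad\text{in }\Omega,\qquad \underline{u}=0\ \text{in }\mathbb{R}^N\setminus\Omega.
\end{equation*}
In the first case, $f\in C^{\beta}(\overline{\Omega})$ with $f>0$ on $\overline{\Omega}$, so the standard comparison/Hopf-type estimates for $\mathcal{L}=-\Delta+(-\Delta)^s$ (available via Biagi et al.\ \cite{BDVV2022}) give a strictly positive lower bound $\underline{u}\ge c\,\mathrm{dist}(x,\partial\Omega)$, which is what we need to control the singular denominator away from zero. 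In the second case $f\in L^1(\Omega)\cap L^\infty(\Omega_\eta)$, we recover the same boundary positivity using that $f$ is bounded on the boundary strip $\Omega_\eta$. Next, I would build the supersolution. By Theorem \ref{t1.2}, let $v\ge 0$ be the duality/weak solution of $\mathcal{L}v=\mu$ in $\Omega$, $v=0$ outside $\Omega$, and set $\overline{u}:=\underline{u}+v$. Then, at least formally,
\begin{equation*}
\mathcal{L}\overline{u}=\frac{f(x)}{\underline{u}^{\gamma}}+\mu\ \ge\ \frac{f(x)}{\overline{u}^{\gamma}}+\mu,
\end{equation*}
since $\overline{u}\ge\underline{u}>0$; this makes $\overline{u}$ a \emph{veryweak} supersolution and $\underline{u}$ a \emph{veryweak} subsolution to \eqref{eq2:problem}.

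To turn the sandwich $\underline{u}\le u\le\overline{u}$ into an actual \emph{veryweak} solution, I would run the standard iterative sub-supersolution scheme: define $u_0=\underline{u}$ and let $u_{n+1}\in L^1(\Omega)$ be the unique duality solution to
\begin{equation*}
\mathcal{L}u_{n+1}=\frac{f(x)}{u_n^{\gamma}}+\mu\quad\text{in }\Omega,\qquad u_{n+1}=0\ \text{in }\mathbb{R}^N\setminus\Omega,
\end{equation*}
whose well-posedness and $L^q$-regularity is guaranteed by Theorem \ref{t1.2} applied to the bounded datum $f/u_n^\gamma+\mu$ (this is where the lower bound on $\underline{u}$ is essential). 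Monotonicity $\underline{u}\le u_n\le u_{n+1}\le\overline{u}$ and uniqueness of the limit will come from the \emph{veryweak} maximum principle of Lemma \ref{eq6.15A}, applied to the differences $u_{n+1}-u_n$ and $\overline{u}-u_{n+1}$, together with the Kato-type inequality \eqref{keto inq 2} for $\mathcal{L}$ which is precisely what makes the truncations $(\cdot)^{\pm}$ compatible with the very weak formulation (testing against solutions of $\mathcal{L}\varphi=\psi$, $\psi\in C_c^\infty$, rather than against $\varphi$ directly). Passing to the monotone limit $u=\lim_n u_n$ and using the dominated convergence theorem together with the uniform bound $\underline{u}\le u_n\le\overline{u}$ and $f/u_n^\gamma\le f/\underline{u}^{\gamma}\in L^1(\Omega)$ (by the Hopf bound on $\underline{u}$) yields a \emph{veryweak} solution in the sense of Definition \ref{def5.1 VW}.

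The main obstacle, as in the purely local case, is justifying the \emph{veryweak} supersolution inequality for $\overline{u}=\underline{u}+v$ rigorously, because $v$ is merely an $L^q$ duality solution (not even in $H^1$) and the nonlocal tail contributes at every pair of points, so the naive addition of equations does not make sense against arbitrary $C_c^\infty$ test functions. The resolution is precisely the equivalence between duality and weak solutions to $\mathcal{L}w=\mu$ that Theorem \ref{t1.2} establishes for the mixed operator: it legitimizes testing against the unique solution $\varphi$ of $\mathcal{L}\varphi=\psi$ with $\psi\in C_c^\infty(\Omega)$, $\varphi\ge 0$ when $\psi\ge 0$ (by the weak maximum principle), and then the Kato-type inequality \eqref{keto inq 2} converts pointwise sign information into distributional inequalities, letting us compare $\underline{u}+v$ with the iterates $u_n$ via the \emph{veryweak} maximum principle. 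Once this is in place, the two integrability hypotheses on $f$ enter only at the very last step, namely ensuring that $f/\underline{u}^\gamma$ is in $L^1(\Omega)$ so that the duality formulation of each $u_{n+1}$ is well defined and that the right-hand sides converge in $L^1$ to $f/u^\gamma+\mu$.
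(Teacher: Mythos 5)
Your sub/super solution construction and the use of the Brezis--Cabr\'e/Hopf bound $\underline{u}\ge c\,\delta(x)$ to ensure $f/\underline{u}^\gamma\in L^1(\Omega)$ match the paper (Theorem \ref{thm5.2}, via Lemma \ref{lm 6.4}), as does the choice $\overline{u}=\underline{u}+v$ with $v$ the duality solution of $\mathcal{L}v=\mu$. However, the step you use to close the argument has a genuine gap. You propose the Picard iteration $u_0=\underline{u}$, $\mathcal{L}u_{n+1}=f/u_n^{\gamma}+\mu$, and claim monotonicity $\underline{u}\le u_n\le u_{n+1}\le\overline{u}$ by the maximum principle. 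But the nonlinearity $t\mapsto f(x)\,t^{-\gamma}$ is strictly \emph{decreasing} in $t$, so one-sided monotonicity of the iterates fails: from $u_1\ge u_0$ you get $f/u_1^{\gamma}\le f/u_0^{\gamma}$, hence $\mathcal{L}u_2\le\mathcal{L}u_1$ and $u_2\le u_1$, and the sign of $\mathcal{L}(u_2-u_0)=f/u_1^{\gamma}-f/u_0^{\gamma}+\mu$ is indeterminate. More generally, if the sequence were increasing then the right-hand sides $f/u_n^{\gamma}+\mu$ would be decreasing, forcing the next iterates to decrease — a contradiction. So the iteration typically oscillates rather than increases, and your application of Lemma \ref{eq6.15A} to $u_{n+1}-u_n$ does not yield a monotone limit.

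The paper avoids this obstacle entirely: in Theorem \ref{thm5.1} it truncates the nonlinearity to
\[
\bar{g}(x,t)=
\begin{cases}
f(x)\underline{u}(x)^{-\gamma}, & t<\underline{u}(x),\\
f(x)\,t^{-\gamma}, & \underline{u}(x)\le t\le \overline{u}(x),\\
f(x)\overline{u}(x)^{-\gamma}, & t>\overline{u}(x),
\end{cases}
\]
so that $\bar g(\cdot,v)\in L^1(\Omega)$ uniformly, and then applies \emph{Schauder's fixed point theorem} to the solution map $G:L^1(\Omega)\to L^1(\Omega)$, $G(v)=$ (unique duality/veryweak solution of $\mathcal{L}u=\bar g(x,v)+\mu$), using \cite[Lemma 2.6]{OP2018} to get compactness in $L^1$. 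Once a fixed point $u$ is found, the Kato-type inequality \eqref{keto inq 2} is used \emph{only} to prove $\underline{u}\le u\le\overline{u}$ a posteriori (the paper's Claim 1), not to propel a monotone iteration. If you wish to repair your argument while keeping the iterative flavour, you would have to use a two-sided (alternating) scheme tailored to decreasing nonlinearities, or switch to the Schauder route as the paper does.
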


The rest of the paper is organized as follows: In Section \ref{sec2}, we recall some fundamental results and develop the necessary tools for the solution space related to our problem. Section \ref{sec3} is devoted to obtaining the existence of weak solutions to the problem considered. In Section \ref{sec4}, we establish the existence of a {\it{duality}} solution and its equivalence with a weak solution. Section \ref{sec5} is engrossed with the existence result for the existence of a $\it{veryweak}$ solution. Finally, in Section \ref{sec6}, we prove a Kato-type inequality for the mixed local and nonlocal operator, a {\it{veryweak}} maximum principle and a weak comparison principle for the mixed operator. 
\section{Preliminaries and solution space setup}\label{sec2}
\noindent We begin this section with the definitions and fundamental properties of Sobolev and fractional Sobolev spaces. Let $\Omega\subset\mathbb{R}^N,\, N\geq2$ be a bounded domain with Lipschitz boundary $\partial\Omega$. 
   \begin{definition}
       For each integer $k\geq0$ and $p\in[1, \infty)$, the Sobolev space $W^{k,p}(\Omega)$ is defined as follows:
   \begin{align*}
       W^{k,p}(\Omega)=\{u \in L^p(\Omega):D^\alpha u \in L^p(\Omega),\,\, \forall\, |\alpha| \leq k\},
       \end{align*}
       where $\alpha$ is a multi-index. The space $W^{k,p}(\Omega)$ is a is a Banach space equipped with the norm 
       \begin{equation}\label{n1}
           \|u\|_{W^{k,p}(\Omega)}=\left(\sum_{|\alpha| \leq k} \|D^\alpha u\|^p_{L^p(\Omega)}\right)^\frac{1}{p}.
       \end{equation} 
       Moreover, it is separable for $1\leq p<\infty$ and reflexive for $1<p<\infty.$ Note that for $k=1$, the norm \eqref{n1} is equivalent to the following norm,
       \begin{equation}\label{n1-1}
           \|u\|_{W^{1,p}(\Omega)}=\|u\|_{L^p(\Omega)}+ \|\nabla u\|_{L^p(\Omega)}. 
       \end{equation}
   \end{definition}
\noindent We follow Di Nezza \textit{et al.} \cite{NPV2012} for the fractional Sobolev spaces, which is defined as   
   \begin{definition}
       For every $0<s<1\leq p<\infty$, the fractional Sobolev space $W^{s,p} (\Omega)$ is defined as
       \begin{equation*}
       \begin{aligned}
           W^{s,p}(\Omega)&=\left\{u \in L^p(\Omega): \frac{|u(x)-u(y)|}{|x-y|^{\frac{N}{p}+s}} \in L^p(\Omega \times \Omega)\right\}.
            \end{aligned}
           \end{equation*}
           The space $W^{s,p} (\Omega)$ is a Banach space for all $p\in[1,\infty)$, equipped with the norm
           \begin{equation}\label{n s}
               \|u\|_{W^{s,p}(\Omega)}=\left(\|u\|^p_{L^p(\Omega)}+ [u]_{W^{s,p}(\Omega)}^p\right)^{\frac{1}{p}},
           \end{equation}
           where $[u]_{W^{s,p}(\Omega)}$ is the Gagliardo seminorm of $u$ which is given by
           \begin{equation}\label{n g}
               [u]_{W^{s,p}(\Omega)}=\bigg(\int_\Omega \int_\Omega \frac{|u(x)-u(y)|^p}{|x-y|^{N+ps}}\d x \d y\bigg)^\frac{1}{p}.
           \end{equation}
              \end{definition}
\noindent Note that the norm \eqref{n s} is equivalent to the following norm,
          \begin{align}\label{n s1}
              \|u\|_{W^{s,p}(\Omega)}=\|u\|_{L^p(\Omega)}+[u]_{W^{s,p}(\Omega)}
          \end{align}
   Moreover, the space $W^{s,p} (\Omega)$ is separable for $1\leq p<\infty$ and reflexive for $1<p<\infty.$ Since, we are considering problems with the Dirichlet boundary data, we define the following closed subspaces of  $W^{1,p} (\Omega)$ and  $W^{s,p} (\Omega)$ respectively.
  \begin{definition}
    The space $W_0^{1,p}(\Omega)$ is defined as the closure of $ C_c^\infty(\Omega)$ in $W^{1,p}(\Omega)$ with respect to the norm in \eqref{n1-1} and the space $W_0^{s,p}(\Omega)$ is defined as the closure of $ C_c^\infty(\Omega)$ in $W^{1,p}(\Omega)$ with respect to the norm in \eqref{n s}, where  $C_c^\infty(\Omega)$ is the set of all smooth functions having compact support in $\Omega$. The spaces $W_0^{1,p}(\Omega)$ and $W_0^{s,p}(\Omega)$ are Banach spaces and are characterized as:
    \begin{align*}
        W_0^{1,p}(\Omega)&=\{u\in W^{1,p}(\Omega):u=0~\text{on}~ \partial \Omega\}\\
        W_0^{s,p}(\Omega)&=\{u\in W^{s,p}(\Omega):u=0~\text{in}~ \mathbb R^N \setminus \Omega\}.
    \end{align*}
\end{definition}
\noindent It is noteworthy to mention here that, we will use the following notion to define Dirichlet boundary data in the fractional Sobolev space.
\begin{definition}
    We say function $u\leq 0$ on the boundary $\partial\Omega$ of a bounded domain $\Omega$, if $u=0$ in $\mathbb{R}^N\setminus\Omega$  and for any $\epsilon>0$, we have
    $$(u-\epsilon)^+\in W_0^{1,p}(\Omega).$$
In particular, $u=0$ on the boundary $\partial\Omega$ if $u\leq 0$ on $\partial\Omega$ and $u\geq 0$ on $\partial\Omega$.
\end{definition}
 \noindent We have the Poincar\'e inequality for all bounded domain $\Omega$,
  \begin{align}\label{lmn2.1 PQ}
        \|u\|_{L^p(\Omega)}\leq C\|\nabla u\|_{L^p(\Omega)}\,\,\, \forall\, u \in W_0^{1,p}(\Omega),
        \end{align}
        for some $C>0$. Thus, the norm \eqref{n1-1} reduces to the following norm in $W^{1,p}_0(\Omega)$,
        \begin{equation}\label{nh1}
            \|u\|_{W^{1,p}_0(\Omega)}=\left(\int_\Omega |\nabla u|^p\right)^\frac{1}{p}.
        \end{equation}
 Similarly, the Gagliardo seminorm \eqref{n g} becomes a norm on $W^{s,p}_0(\Omega)$, that is 
 \begin{equation}\label{nhs}
               \|u\|_{W_0^{s,p}(\Omega)}=\bigg(\int_\Omega \int_\Omega \frac{|u(x)-u(y)|^p}{|x-y|^{N+ps}}\d x \d y\bigg)^\frac{1}{p}.
           \end{equation}

 \begin{lemma}\label{sob eql} \cite{EVANS2022}
   For every $u \in W^{1,p}(\mathbb{R}^N)$ with $1<p<N$, we have 
   \begin{equation}\label{eq2.9}
           \bigg(\int_{\mathbb{R}^N}|u(x)|^{p^*}dx\bigg)^\frac{1}{p^*}\leq C\bigg(\int_{\mathbb{R}^N}|\nabla u(x)|^{p}dx\bigg)^\frac{1}{p},
       \end{equation}
       where $C>0$ is the best embedding constant and $p^*=\frac{Np}{N-p}$. In particular, when $\Omega$ is a bounded domain with Lipschitz boundary, then we have the embedding $W^{1,p}(\Omega) \hookrightarrow L^q(\Omega)$, which is compact for all $q\in[1,p^*]$ and is compact for $1\leq q<p^*$.  The result is also true for the space $W_0^{1,p}(\Omega)$.
   \end{lemma}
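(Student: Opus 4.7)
The plan is to follow the classical Gagliardo–Nirenberg–Sobolev argument as in Evans, first proving the inequality on $\mathbb{R}^N$ for $p=1$ and then bootstrapping to $1<p<N$, and finally deducing the compact embedding on bounded Lipschitz domains via Fréchet–Kolmogorov.

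First, I would establish the inequality for $u\in C_c^\infty(\mathbb{R}^N)$ in the base case $p=1$: by the fundamental theorem of calculus along each coordinate direction, $|u(x)|\le \int_{-\infty}^\infty |\partial_i u|\,\d x_i$ for every $i=1,\dots,N$. Taking the product over $i$, raising to the power $1/(N-1)$, and applying the generalized Hölder inequality iteratively in each variable produces the estimate $\|u\|_{L^{N/(N-1)}(\mathbb{R}^N)}\le \prod_{i=1}^N \|\partial_i u\|_{L^1(\mathbb{R}^N)}^{1/N}\le C\|\nabla u\|_{L^1(\mathbb{R}^N)}$. To pass from $p=1$ to a general $1<p<N$, I would apply the $p=1$ inequality to the auxiliary function $v=|u|^{\gamma}$ with the sharp choice $\gamma=\tfrac{p(N-1)}{N-p}$, compute $\nabla v=\gamma |u|^{\gamma-1}\mathrm{sgn}(u)\nabla u$, and invoke Hölder with exponents $p$ and $p/(p-1)$ to isolate $\|\nabla u\|_{L^p}$ on the right-hand side; the choice of $\gamma$ is precisely what makes the remaining power of $|u|$ on both sides cancel, producing the target exponent $p^*=Np/(N-p)$. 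A standard density argument extends the estimate from $C_c^\infty(\mathbb{R}^N)$ to all $u\in W^{1,p}(\mathbb{R}^N)$.

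For the bounded Lipschitz domain case, I would invoke the existence of a bounded extension operator $E\colon W^{1,p}(\Omega)\to W^{1,p}(\mathbb{R}^N)$ (which uses the Lipschitz regularity of $\partial\Omega$), apply \eqref{eq2.9} to $Eu$, and restrict back to $\Omega$ to obtain the continuous embedding $W^{1,p}(\Omega)\hookrightarrow L^{p^*}(\Omega)$. Interpolation between $L^1(\Omega)$ and $L^{p^*}(\Omega)$ then gives continuity of $W^{1,p}(\Omega)\hookrightarrow L^q(\Omega)$ for all $1\le q\le p^*$. For the $W_0^{1,p}(\Omega)$ version, extension is trivial: zero-extending produces an element of $W^{1,p}(\mathbb{R}^N)$ without any boundary regularity assumption.

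For compactness when $1\le q<p^*$, I would use the Fréchet–Kolmogorov characterization of precompactness in $L^q(\Omega)$. Take a bounded set $B\subset W^{1,p}(\Omega)$ and extend to a bounded set in $W^{1,p}(\mathbb{R}^N)$. The key estimate is translation continuity: for $u\in C_c^\infty(\mathbb{R}^N)$, $\|u(\cdot+h)-u\|_{L^1}\le |h|\,\|\nabla u\|_{L^1}$; by density and rescaling, $\|u(\cdot+h)-u\|_{L^1(\mathbb{R}^N)}\lesssim |h|\,\|\nabla u\|_{L^p(\mathbb{R}^N)}$ on bounded supports, and one interpolates between this $L^1$-modulus bound and the uniform $L^{p^*}$-bound (from the Sobolev embedding just established) to control $\|u(\cdot+h)-u\|_{L^q}$ uniformly on $B$, with modulus tending to $0$ as $|h|\to 0$. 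Combined with the uniform $L^q$-bound and the boundedness of $\Omega$ (which handles the tail-at-infinity condition automatically), Fréchet–Kolmogorov yields precompactness of $B$ in $L^q(\Omega)$.

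The main obstacle I would expect is not the algebra of the Sobolev inequality (which is essentially mechanical once the right $\gamma$ is chosen) but rather the technical construction and control of the extension operator for Lipschitz domains, together with verifying the uniform translation-modulus estimate needed for Fréchet–Kolmogorov; the interpolation step between $L^1$ and $L^{p^*}$ must be done carefully so that the modulus of continuity depends only on $\|u\|_{W^{1,p}}$ and not on the individual function.
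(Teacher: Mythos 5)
Your proposal is correct and is essentially the textbook Gagliardo--Nirenberg--Sobolev / Rellich--Kondrachov argument; the paper itself gives no proof of this lemma, simply citing Evans, and your sketch (the $p=1$ case by iterated H\"older, bootstrapping via $v=|u|^{\gamma}$ with $\gamma=\tfrac{p(N-1)}{N-p}$, the Lipschitz extension operator, and Fr\'echet--Kolmogorov for compactness when $q<p^*$) is exactly the cited standard route. No gaps; note only that the lemma's statement contains a typo --- the first occurrence of ``compact for all $q\in[1,p^*]$'' should read ``continuous,'' which is what your argument in fact proves.
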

   \begin{lemma}\cite[Theorem 6.5]{NPV2012}
   For every $u \in W^{s,p}(\mathbb{R}^N)$ with $1<p<\frac{N}{s}$, we have 
   \begin{equation}
           \bigg(\int_{\mathbb{R}^N}|u(x)|^{p_s^*}dx\bigg)^\frac{1}{p_s^*}\leq C\bigg(\int_{\mathbb{R}^N} \int_{\mathbb{R}^N} \frac{|u(x)-u(y)|^p}{|x-y|^{N+sp}}\d x\d y \bigg)^\frac{1}{p},
       \end{equation}
        where $C>0$ is the best embedding constant and $p_s^*=\frac{Np}{N-ps}$. In particular, when $\Omega$ is a bounded domain with Lipschitz boundary, then we have the embedding $W^{s,p}(\Omega) \hookrightarrow L^q(\Omega)$, which is compact for all $q\in[1,p_s^*]$ and is compact for $1\leq q<p_s^*$. The result is also true for the space $W_0^{s,p}(\Omega)$. 
   \end{lemma}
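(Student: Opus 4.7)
The plan is to follow the classical route of Di Nezza, Palatucci and Valdinoci \cite{NPV2012}. By density of $C_c^\infty(\mathbb{R}^N)$ in $W^{s,p}(\mathbb{R}^N)$, I first prove the inequality for smooth compactly supported functions and then pass to the limit. The proof splits into two layers: first establish the borderline case $p=1$ and then bootstrap to general $p\in(1,N/s)$.

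For $p=1$, I would use a dyadic slicing of $u$. Setting $A_t=\{|u|>t\}$ and truncating at dyadic levels $u_k=\min(\max(|u|-2^k,0),2^k)$, the key estimate to establish is
\[
\sum_{k\in\mathbb{Z}} 2^k \,|A_{2^k}|^{\frac{N-s}{N}} \;\le\; C\,[u]_{W^{s,1}(\mathbb{R}^N)},
\]
obtained via a coarea-type argument applied to the Gagliardo integrand after restricting attention to pairs $(x,y)$ with $x\in A_{2^{k+1}}$ and $y\notin A_{2^k}$. Chebyshev's inequality on the left-hand sum then yields $\|u\|_{L^{1_s^*}(\mathbb{R}^N)}\le C\,[u]_{W^{s,1}}$ with $1_s^*=N/(N-s)$. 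To pass to general $p$, I would apply this $p=1$ estimate to $v=|u|^{\gamma}$ with the sharp choice $\gamma=\frac{p(N-s)}{N-sp}$, exploit the pointwise bound $\bigl||a|^\gamma-|b|^\gamma\bigr|\le \gamma(|a|^{\gamma-1}+|b|^{\gamma-1})|a-b|$, and then apply H\"older's inequality in the Gagliardo double integral to obtain $[v]_{W^{s,1}}\le C\,\|u\|_{L^{p_s^*}}^{\gamma-1}\,[u]_{W^{s,p}}$. Substituting back and absorbing the resulting $L^{p_s^*}$-factor to the left-hand side delivers the desired inequality on $\mathbb{R}^N$.

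For the bounded-domain version, I would invoke a linear extension operator $E\colon W^{s,p}(\Omega)\to W^{s,p}(\mathbb{R}^N)$ with $\|Eu\|_{W^{s,p}(\mathbb{R}^N)}\le C\|u\|_{W^{s,p}(\Omega)}$, which exists because $\partial\Omega$ is Lipschitz. Combining this with the $\mathbb{R}^N$ inequality and H\"older's inequality (since $|\Omega|<\infty$) yields the continuous embedding $W^{s,p}(\Omega)\hookrightarrow L^q(\Omega)$ for every $q\in[1,p_s^*]$. For compactness in the subcritical range $q<p_s^*$, I would verify the Fr\'echet--Kolmogorov criterion in $L^q(\Omega)$: boundedness follows from the continuous embedding, and equicontinuity of translations
\[
\lim_{|h|\to 0}\;\sup_{u\in\mathcal{F}}\,\|u(\cdot+h)-u\|_{L^q(\Omega)}=0
\]
is produced by interpolating between the trivial $L^q$-bound and the fractional modulus of continuity encoded in the Gagliardo seminorm. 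For $W_0^{s,p}(\Omega)$, extension by zero yields a function in $W^{s,p}(\mathbb{R}^N)$ with an equivalent norm, so the same argument applies verbatim.

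The main obstacle I expect is the fractional coarea/isoperimetric step in the base case $p=1$: unlike the local setting, one cannot directly differentiate through level sets, and the dyadic slicing has to be paired delicately with the nonlocal Gagliardo kernel to recover the correct power $(N-s)/N$ of the measures of superlevel sets. Once that estimate is in hand, the remaining stages are essentially adaptations of the classical Sobolev embedding machinery to the fractional setting.
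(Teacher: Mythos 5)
This lemma is not proved in the paper at all: it is quoted verbatim from \cite[Theorem~6.5]{NPV2012}, so the only meaningful comparison is with that source. Your overall architecture (density of $C_c^\infty$, a dyadic level-set estimate as the core, an extension operator for Lipschitz domains, and Fr\'echet--Kolmogorov for compactness) is the standard one, and the base-case estimate $\sum_k 2^k\,|A_{2^k}|^{(N-s)/N}\le C\,[u]_{W^{s,1}}$ is indeed obtainable by pairing $x\in A_{2^{k+1}}$ with $y\notin A_{2^k}$ (so that $|u(x)-u(y)|\ge 2^k$) and a fractional isoperimetric-type bound, modulo the bookkeeping for the overlapping annuli that you correctly identify as delicate.

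The genuine gap is the bootstrap from $p=1$ to general $p$. The claimed intermediate inequality $[\,|u|^\gamma\,]_{W^{s,1}}\le C\,\|u\|_{L^{p_s^*}}^{\gamma-1}\,[u]_{W^{s,p}}$ cannot be obtained by H\"older's inequality in the Gagliardo double integral: after the pointwise bound you must split the kernel as $|x-y|^{-(N+s)}=|x-y|^{-(N+sp)/p}\cdot|x-y|^{-N/p'}$, and the conjugate factor then forces you to control
\begin{equation*}
\int_{\mathbb{R}^N}\int_{\mathbb{R}^N}\frac{\bigl(|u(x)|^{\gamma-1}+|u(y)|^{\gamma-1}\bigr)^{p'}}{|x-y|^{N}}\,\mathrm{d}x\,\mathrm{d}y,
\end{equation*}
which is infinite because $|z|^{-N}$ is integrable neither at the origin nor at infinity (and splitting into near- and far-field regions does not help, since both pieces diverge). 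This is exactly the point where the local chain-rule trick $|\nabla|u|^\gamma|=\gamma|u|^{\gamma-1}|\nabla u|$ has no usable nonlocal analogue. The proof in \cite{NPV2012} avoids the reduction entirely: it runs the dyadic level-set argument directly for every $p\ge 1$, using $|u(x)-u(y)|^p\ge 2^{kp}$ on $A_{2^{k+1}}\times A_{2^k}^{\,c}$ together with the convexity lemmas \cite[Lemmas~6.1--6.3]{NPV2012} to extract $\|u\|_{L^{p_s^*}}$ from $\sum_k 2^{kp}a_{k+1}a_k^{-sp/N}$. You should either adopt that route or replace the $|u|^\gamma$ substitution by Maz'ya's truncation method applied to the pieces $u_k$; as written, the passage to $p>1$ does not close. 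The extension-operator and Fr\'echet--Kolmogorov parts of your sketch are fine.
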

\noindent   The next lemma is crucial for studying our problem.
   \begin{lemma}{\cite[Proposition 2.2]{NPV2012}, \cite[Lemma 2.1]{BSM2022}}
   Let $\Omega\subset\mathbb{R}^N$ be a bounded domain with Lipschitz boundary $\partial\Omega$. Then for $0<s<1 \leq p< \infty$
       there exists $C=C(N,p,s)>0$ such that 
       \begin{equation}\label{eq2.5IM}
       \begin{aligned}
    \|u\|_{W^{s,p}(\Omega)} \leq C\|u\|_{W^{1,p}(\Omega)}, \,\,\, \forall\, u\in W^{1,p}(\Omega).
     \end{aligned}
        \end{equation}
Moreover, for every $u \in W_0^{1,p}(\Omega)$ with $u=0$ in $\mathbb R^N\setminus\Omega$, we have
        \begin{equation}\label{sg-emb}
           \int_{\mathbb R^N} \int_{\mathbb R^N} \frac{|u(x)-u(y)|^p}{|x-y|^{N+sp}} \d x \d y \leq C \int_\Omega |\nabla u|^p \d x 
       \end{equation}
       and in particular, 
        \begin{equation}\label{sg-emb2}
           \int_{\Omega} \int_{\Omega} \frac{|u(x)-u(y)|^p}{|x-y|^{N+sp}} \d x \d y \leq C \int_\Omega |\nabla u|^p \d x \,\,\, \forall \, u \in W_0^{1,p}(\Omega).
           %~\text{with}~u=0~\text{in}~\mathbb R^N\setminus\Omega.
       \end{equation}
       \end{lemma}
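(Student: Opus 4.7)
The plan is to establish the Gagliardo seminorm bound first for functions in $W^{1,p}(\mathbb{R}^N)$, and then transfer the result to the bounded Lipschitz domain $\Omega$ using an extension operator. The cornerstone is the translation estimate: for $u\in W^{1,p}(\mathbb{R}^N)$ and $h\in\mathbb{R}^N$,
\[
\|u(\cdot+h)-u\|_{L^p(\mathbb{R}^N)}^p \leq |h|^p\,\|\nabla u\|_{L^p(\mathbb{R}^N)}^p,
\]
which, for smooth compactly supported $u$, follows from $u(x+h)-u(x)=\int_0^1 \nabla u(x+th)\cdot h\,\d t$ combined with Jensen's inequality and the translation invariance of Lebesgue measure, and then extends by density to all of $W^{1,p}(\mathbb{R}^N)$.

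Next I split the double integral over $\mathbb{R}^N\times\mathbb{R}^N$ into the regions $|x-y|<1$ and $|x-y|\geq 1$. On the far region, use $|u(x)-u(y)|^p\le 2^{p-1}(|u(x)|^p+|u(y)|^p)$ together with $\int_{|z|\geq 1}|z|^{-(N+sp)}\,\d z<\infty$ (which holds since $sp>0$) to bound this piece by $C\|u\|_{L^p(\mathbb{R}^N)}^p$. On the near region, change variables $h=x-y$, apply Fubini and the translation estimate to obtain
\[
\int_{\mathbb{R}^N}\int_{|x-y|<1}\frac{|u(x)-u(y)|^p}{|x-y|^{N+sp}}\,\d x\,\d y \leq \|\nabla u\|_{L^p(\mathbb{R}^N)}^p\int_{|h|<1}|h|^{p(1-s)-N}\,\d h,
\]
and the last integral is finite because $p(1-s)>0$. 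Adding the two pieces yields $\iint_{\mathbb{R}^N\times\mathbb{R}^N}\frac{|u(x)-u(y)|^p}{|x-y|^{N+sp}}\,\d x\,\d y\leq C(N,p,s)\,\|u\|_{W^{1,p}(\mathbb{R}^N)}^p$.

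For \eqref{eq2.5IM} I invoke the Stein/Calder\'on extension operator $E\colon W^{1,p}(\Omega)\to W^{1,p}(\mathbb{R}^N)$, available because $\partial\Omega$ is Lipschitz, satisfying $\|Eu\|_{W^{1,p}(\mathbb{R}^N)}\le C\|u\|_{W^{1,p}(\Omega)}$. Applying the $\mathbb{R}^N$ estimate to $Eu$ and restricting the double integral from $\mathbb{R}^N\times\mathbb{R}^N$ down to $\Omega\times\Omega$ (which only decreases it), then adding $\|u\|_{L^p(\Omega)}^p$ to both sides, gives \eqref{eq2.5IM}. For \eqref{sg-emb}, no extension is needed: the zero extension $\tilde u$ of $u\in W_0^{1,p}(\Omega)$ lies in $W^{1,p}(\mathbb{R}^N)$ with identical $L^p$ and gradient norms, so the $\mathbb{R}^N$ bound together with the Poincar\'e inequality \eqref{lmn2.1 PQ} (to absorb $\|u\|_{L^p(\Omega)}$ into $\|\nabla u\|_{L^p(\Omega)}$) produces precisely \eqref{sg-emb}. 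Finally \eqref{sg-emb2} is immediate since $\Omega\times\Omega\subset\mathbb{R}^N\times\mathbb{R}^N$.

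The chief subtlety is that for a Lipschitz but non-convex $\Omega$, the segment joining two points of $\Omega$ can leave $\Omega$, so the fundamental theorem of calculus representation of $u(x)-u(y)$ along $[x,y]$ cannot be applied on $\Omega$ directly. The Stein extension theorem is the one piece that uses the Lipschitz regularity of $\partial\Omega$ in an essential way; once it is in hand, the remainder of the argument is routine splitting, Fubini and a one-dimensional integrability check, with the density of smooth functions legitimising the translation estimate on the whole of $W^{1,p}$.
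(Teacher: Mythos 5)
Your proof is correct and follows essentially the same route as the sources the paper cites for this lemma (the paper itself gives no proof, deferring to \cite[Proposition 2.2]{NPV2012} and \cite[Lemma 2.1]{BSM2022}): the translation estimate, the near/far splitting of the Gagliardo seminorm at $|x-y|=1$, the Stein extension for \eqref{eq2.5IM}, and zero extension combined with the Poincar\'e inequality for \eqref{sg-emb}. The only caveat, which concerns the statement rather than your argument, is that the constants in \eqref{sg-emb}--\eqref{sg-emb2} necessarily depend on $\Omega$ (through the Poincar\'e constant), as a scaling check confirms.
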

\noindent We now define the solution space for our problem.    
\begin{definition}
  Let $\Omega\subset\mathbb{R}^N$ be a bounded domain with Lipschitz boundary $\partial \Omega$ and let $0<s<1\leq p<\infty$. We define the space $X_0^{s,p}(\Omega)$ as the closure of $C_c^\infty(\Omega)$ with respect to the following norm:
  \begin{align}\label{eq2.7 N}
      \|u\|_{X_0^{s,p}(\Omega)}&=\bigg(\int_\Omega|\nabla u|^p \d x+\int_{\Omega} \int_{\Omega} \frac{|u(x)-u(y)|^p}{|x-y|^{N+sp}}\d x\d y \bigg)^\frac{1}{p},\,\,\,\,\forall\, u\in C_c^\infty(\Omega).
      \end{align} 
      \end{definition}
      \begin{remark}\label{rm 2.1}
      On using the inequalities \ref{sg-emb}, \eqref{sg-emb2} and the Poincar\'e inequality in \eqref{lmn2.1 PQ}, we obtain the following equivalent norms in $X_0^{s,p}(\Omega)$:
      \begin{align*}
          \|u\|_{X_0^{s,p}(\Omega)}&=\bigg(\int_\Omega|\nabla u|^p \d x \bigg)^\frac{1}{p},\,\,\,\forall \, u \in X_0^{s,p}(\Omega) \text{ and }\\
          \|u\|_{X_0^{s,p}(\Omega)}&=\bigg(\int_\Omega|\nabla u|^p \d x+\int_{\mathbb{R}^N} \int_{\mathbb{R}^N} \frac{|u(x)-u(y)|^p}{|x-y|^{N+sp}}\d x\d y \bigg)^\frac{1}{p},\,\,\,\forall \, u \in X_0^{s,p}(\Omega).
      \end{align*}
Thus, the space $X_0^{s,p}(\Omega)$ is characterized as
    \begin{equation}\nonumber 
          X_0^{s,p} (\Omega)={\{u\in W^{1,p} (\Omega): u=0 \text{ in } \mathbb R^N \setminus \Omega} \text{ and } [u]_{W^{s,p}(\Omega)}< + \infty \}.
          \end{equation}
    \end{remark}
    \noindent Note that on using \eqref{eq2.9}, we get the following Sobolev inequality in $X_0^{s,p} (\mathbb{R}^N)$. 
     \begin{equation}\label{eq 2.15}
           \bigg(\int_{\mathbb{R}^N}|u(x)|^{p^*}dx\bigg)^\frac{1}{p^*}\leq C\bigg(\int_{\mathbb{R}^N}|\nabla u(x)|^{p}dx +\int_{\mathbb{R}^N} \int_{\mathbb{R}^N} \frac{|u(x)-u(y)|^p}{|x-y|^{N+sp}}\d x\d y\bigg)^\frac{1}{p},
       \end{equation}
       where $C>0$ is the best embedding constant and $p^*=\frac{Np}{N-p}>\frac{Np}{N-ps}:=p_s^*$. Therefore, the inequality \eqref{eq2.9} and Remark \ref{rm 2.1} assert that
          \begin{equation*}
            \|u\|_{L^p{^*}(\Omega)}=\|u\|_{L^p{^*}(\mathbb R^N)} \leq C\|\nabla u\|_{L^p(\mathbb R^N) }\leq C\|u\|_{X_0^{s,p}(\Omega)},\ \,\,\, \forall\, u \in X_0^{s,p}(\Omega).
             \end{equation*} 
    \begin{theorem}\label{thm cpt}
     Let $0<s<1\leq p<\infty$ and let $\Omega\subset\mathbb{R}^N$ be a bounded domain with Lipschitz boundary $\partial \Omega$. Then we have
             \begin{equation}\label{eq2.8S}
                  \|u\|_{L^p{^*}(\Omega)} \leq C\|u\|_{X_0^{s,p}(\Omega)},\ \,\,\, \forall\, u \in X_0^{s,p}(\Omega).
             \end{equation}
             Moreover, the embedding $X_0^{s,p}(\Omega)\hookrightarrow L^q(\Omega)$ is continuous for $1\leq q\leq p^*$ and is compact $1\leq q< p^*$.
    \end{theorem}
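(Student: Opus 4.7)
The plan is to reduce Theorem \ref{thm cpt} to the classical Sobolev embedding of $W_0^{1,p}(\Omega)$ by exploiting the equivalent norms on $X_0^{s,p}(\Omega)$ listed in Remark \ref{rm 2.1}. The crucial structural observation is that inequality \eqref{sg-emb} guarantees that any function in $W_0^{1,p}(\Omega)$ (extended by zero outside $\Omega$) automatically has finite Gagliardo seminorm, so $X_0^{s,p}(\Omega)$ coincides with $W_0^{1,p}(\Omega)$ as a set with equivalent norms. This permits a clean transfer of the classical embedding results to the mixed setting.

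To prove the Sobolev-type inequality \eqref{eq2.8S}, I would apply Lemma \ref{sob eql} to the zero extension of $u\in X_0^{s,p}(\Omega)$, which lies in $W^{1,p}(\mathbb{R}^N)$. Since this extension vanishes outside $\Omega$, we obtain $\|u\|_{L^{p^*}(\Omega)} = \|u\|_{L^{p^*}(\mathbb{R}^N)} \leq C\|\nabla u\|_{L^p(\mathbb{R}^N)} = C\|\nabla u\|_{L^p(\Omega)}$, and the equivalent norm from Remark \ref{rm 2.1} then yields \eqref{eq2.8S}; this is essentially the calculation already displayed in the paragraph preceding the theorem. For the continuous embedding $X_0^{s,p}(\Omega)\hookrightarrow L^q(\Omega)$ with $1\leq q\leq p^*$, I would combine \eqref{eq2.8S} with H\"older's inequality on the bounded domain $\Omega$, giving
$$\|u\|_{L^q(\Omega)} \leq |\Omega|^{\frac{1}{q}-\frac{1}{p^*}} \|u\|_{L^{p^*}(\Omega)} \leq C\|u\|_{X_0^{s,p}(\Omega)}.$$

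For the compactness statement with $1\leq q< p^*$, I would take a bounded sequence $\{u_n\}\subset X_0^{s,p}(\Omega)$; by the norm equivalence recorded in Remark \ref{rm 2.1}, $\{u_n\}$ is bounded in $W_0^{1,p}(\Omega)$, and the classical Rellich--Kondrachov theorem (Lemma \ref{sob eql}) then supplies a subsequence converging strongly in $L^q(\Omega)$ for every $q<p^*$. I do not anticipate a genuine obstacle in any of these steps, since all the required analytic machinery has been developed earlier in the section. The only point demanding careful justification is the equivalence of the $X_0^{s,p}$-norm with the gradient norm $\|\nabla\cdot\|_{L^p(\Omega)}$, which is precisely the content of \eqref{sg-emb} together with the Poincar\'e inequality \eqref{lmn2.1 PQ}; once this is in hand, everything else reduces to invoking classical results.
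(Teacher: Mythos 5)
Your proposal is correct and follows essentially the same route the paper itself takes: the display immediately preceding Theorem~\ref{thm cpt} already derives the inequality \eqref{eq2.8S} by applying Lemma~\ref{sob eql} to the zero extension of $u$ and invoking the norm equivalence from Remark~\ref{rm 2.1}, and the compactness claim is likewise an immediate consequence of the set-and-norm identification of $X_0^{s,p}(\Omega)$ with $W_0^{1,p}(\Omega)$ (via \eqref{sg-emb}, \eqref{sg-emb2} and the Poincar\'e inequality) together with the classical Rellich--Kondrachov theorem. Your write-up simply makes explicit the steps the paper leaves implicit, so there is nothing to add or correct.
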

  \noindent The following theorem characterises the space $X^{s,p}_0(\Omega)$.
            \begin{theorem}\label{thm prop}
                Let $0<s<1\leq p<\infty$ and let $\Omega\subset\mathbb{R}^N$ be a bounded domain with Lipschitz boundary $\partial \Omega$. Then for all $p\in[1,\infty)$, the space $X^{s,p}_0(\Omega)$ is a Banach space endowed with the norm \eqref{eq2.7 N}. It is separable for all $p\in[1,\infty)$ and is reflexive for all $p\in(1,\infty)$. In particular, when $p=2$, the space $X_0^{s,2}(\Omega)$ reduces to a Hilbert space with respect to the inner product, 
                \begin{equation}
                    \langle u,v\rangle_{X_0^{s,2}(\Omega)}=\int_{\Omega} \nabla u\cdot\nabla v \d x+\int_{\Omega} \int_{\Omega} \frac{(u(x)-u(y))(v(x)-v(y))}{|x-y|^{N+2s}} \d x \d y,
                \end{equation}
                where ``$\cdot$" denotes the standard scalar product in $\mathbb{R}^N$.
            \end{theorem}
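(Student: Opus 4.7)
The plan is to verify completeness, separability and reflexivity, and the Hilbert-space structure for $p=2$ in three coordinated steps, leveraging the equivalent norms listed in Remark \ref{rm 2.1}.

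For completeness, I would take a Cauchy sequence $(u_n) \subset X_0^{s,p}(\Omega)$. By the first equivalent norm in Remark \ref{rm 2.1}, the sequence $(u_n)$ is Cauchy in $W_0^{1,p}(\Omega)$, so there exists $u \in W_0^{1,p}(\Omega)$ (extended by zero outside $\Omega$) with $u_n \to u$ in $W_0^{1,p}(\Omega)$. Passing to an a.e.\ pointwise convergent subsequence $u_{n_k} \to u$ and applying Fatou's lemma to the double integral gives
\[
\int_\Omega\int_\Omega \frac{|u(x)-u(y)|^p}{|x-y|^{N+sp}} \d x \d y \leq \liminf_{k\to\infty} [u_{n_k}]_{W^{s,p}(\Omega)}^p < \infty,
\]
so $u \in X_0^{s,p}(\Omega)$. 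The same Fatou argument applied to $u_{n_k} - u_m$ shows $\|u - u_m\|_{X_0^{s,p}(\Omega)} \to 0$ as $m \to \infty$, giving convergence in the full norm \eqref{eq2.7 N}.

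For separability and reflexivity, I would introduce the linear isometry
\[
T \colon X_0^{s,p}(\Omega) \longrightarrow L^p(\Omega;\mathbb{R}^N) \times L^p(\Omega \times \Omega, \d\mu_s), \qquad T(u) = \Bigl(\nabla u,\; u(x)-u(y)\Bigr),
\]
where $\d\mu_s = |x-y|^{-N-sp}\d x \d y$. That $T$ is an isometry is immediate from \eqref{eq2.7 N}. By Step 1 the image $T(X_0^{s,p}(\Omega))$ is a closed subspace of the product space, which is separable for $1 \leq p < \infty$ and reflexive for $1 < p < \infty$. Separability and reflexivity pass to closed subspaces, and hence back along $T^{-1}$ to $X_0^{s,p}(\Omega)$.

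For the Hilbert-space case $p=2$, I would directly verify that the bilinear form $\langle\cdot,\cdot\rangle_{X_0^{s,2}(\Omega)}$ is symmetric, bilinear, and positive definite. Symmetry and bilinearity are immediate; for positive definiteness, $\langle u,u \rangle_{X_0^{s,2}(\Omega)} = 0$ forces $\nabla u = 0$ a.e.\ in $\Omega$, so $u$ is constant on the connected components of $\Omega$, and the Dirichlet condition $u=0$ in $\mathbb{R}^N \setminus \Omega$ together with $u \in W_0^{1,2}(\Omega)$ forces $u \equiv 0$. The identity $\langle u,u\rangle_{X_0^{s,2}(\Omega)} = \|u\|_{X_0^{s,2}(\Omega)}^2$ matches the inner product to the norm, and combining with Step 1 yields the Hilbert space structure.

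The most delicate point is the Fatou-based extraction in Step 1: one must take care to identify the a.e.\ limit of the subsequence with the $W_0^{1,p}$-limit in order to pass the Gagliardo seminorm bound to the limit. The remaining arguments are largely bookkeeping, given the equivalences in Remark \ref{rm 2.1} and the standard properties of $L^p$ spaces.
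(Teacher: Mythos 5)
Your proposal is correct and follows essentially the same strategy as the paper: in particular your isometry $T(u) = (\nabla u, (u(x)-u(y))|x-y|^{-(N/p+s)})$ into $L^p(\Omega)\times L^p(\Omega\times\Omega)$ is exactly the map the paper uses, and the transfer of separability and reflexivity along a linear isometry onto a closed subspace is the same bookkeeping. Where you differ: the paper dispatches completeness with the one-liner \textquotedblleft by definition, $X_0^{s,p}(\Omega)$ is a Banach space\textquotedblright{} (leaning on the fact that $X_0^{s,p}(\Omega)$ is \emph{defined} as the closure of $C_c^\infty(\Omega)$ in the norm \eqref{eq2.7 N}, hence complete as a closed subspace of a completion), whereas you prove completeness concretely by showing the Cauchy sequence has a $W_0^{1,p}$-limit and then pushing the Gagliardo seminorm bound to that limit via Fatou. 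Your extra work is not wasted: it simultaneously justifies that the abstract completion can be identified with a genuine space of functions (the characterization recorded in Remark \ref{rm 2.1}), which the paper leaves implicit. Similarly, for $p=2$ the paper just invokes that $W_0^{1,2}$ and $W_0^{s,2}$ are Hilbert spaces, while you verify the inner-product axioms directly; both are fine, and your positive-definiteness argument (vanishing gradient plus the zero exterior condition force $u\equiv 0$) is sound. One small remark: you should also note that the image $T(X_0^{s,p}(\Omega))$ being closed is precisely where completeness enters (an isometric image of a complete space is closed), so your Step 1 is logically prior to Step 2 rather than parallel to it — but you in fact cite Step 1 there, so this is only a matter of emphasis.
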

            \begin{proof}
                By definition, $X^{s,p}_0(\Omega)$ is a Banach space. For any $u\in X^{s,p}_0(\Omega)$, choose $A_u(x)=\nabla u(x)$ and $B_u(x,y)=\frac{u(x)-u(y)}{|x-y|^{\frac{N}{p}+s}}$ and define the map $T: X^{s,p}_0(\Omega) \rightarrow L^p(\Omega)\cross L^p(\Omega \cross \Omega)$ such that
                $$T(u)=(A_u,B_u).$$
             Since, $\|Tu\|_{L^p(\Omega)\cross L^p(\Omega \cross \Omega)}=\|u\|_{X^{s,p}_0(\Omega)}$  $\forall\,u\in X^{s,p}_0(\Omega)$, we obtain $T$ is an isometry into the closed subspace of $L^p(\Omega)\cross L^p(\Omega \cross \Omega))$. Thus we get $X^{s,p}_0(\Omega)$ is is reflexive for all $p\in(1,\infty)$ and is separable for all $p\in[1,\infty)$. Finally, using the fact that $W_0^{1,2}(\Omega)$ and $W_0^{s,2}(\Omega)$ are Hilbert spaces, we conclude $X^{s,2}_0(\Omega)$ is a Hilbert space.
            \end{proof}
\begin{remark}
It is noteworthy to mention here that for $p=2$ (\cite[Proposition 3.6]{NPV2012}),
$$[u]_{W_0^{s,2}(\Omega)}=\left(\int_\Omega \int_\Omega \frac{|u(x)-u(y)|^2}{|x-y|^{N+2s}}\d x \d y\right)^\frac{1}{2}=\left(\int_\Omega |(-\Delta)^\frac{s}{2} u|^2 \d x\right)^\frac{1}{2},$$
up to a constant. The following equality holds up to a constant factor (see \cite{NPV2012}). We also refer to \cite{K2017} for different notions of fractional Laplacian and their equivalence.
$$\int_\Omega \int_\Omega \frac{(u(x)-u(y))(\phi(x)-\phi(y))}{|x-y|^{N+2s}}\d x \d y=\int_\Omega (-\Delta)^\frac{s}{2} u \cdot(-\Delta)^\frac{s}{2} \phi\d x,~\forall\,\phi\in W_0^{s,2}(\Omega).$$
\end{remark}
\noindent We now recall the Definitions of Marcinkiewicz space and the H\"older space. The Marcinkiewicz space (or weak $L^q(\Omega)$) is denoted by $M^q(\Omega)$ and is defined for every $0<q<\infty$ as the space of all measurable functions $f:\Omega \rightarrow \mathbb R$ such that the corresponding distribution function $\phi_f(t)=m({\{x\in\Omega: |f(x)|>t}\})$ satisfies the following estimate:
\begin{equation*}
\phi_f(t)=m({\{x\in\Omega: |f(x)|>t}\})\leq \frac{C}{t^q} \,\,\,\,\,\, \forall \,t\geq t_0,
\end{equation*}
where $t_0>0$, $C>0$ and $m$ refers the Lebesgue measure. When $\Omega$ is bounded, we have $M^q(\Omega) \subset M^p(\Omega),\,\,\,\forall\, q\geq p$. Furthermore, the following continuous embedding holds. 
\begin{equation}\label{embed2.1}
L^q(\Omega)\hookrightarrow M^q(\Omega) \hookrightarrow L^{q-\epsilon}(\Omega),
 \end{equation}
 for every $1<q<\infty$ and for all $0<\epsilon<q-1$. For $0<\beta\leq 1$ and $k\in\mathbb{N}\cup\{0\}$, the H\"older space $C^{k,\beta}(\bar{\Omega})$, \cite{EVANS2022} consists of all functions $u\in C^k(\bar\Omega)$ such that $\|u\|_ {C^{k,\beta}(\bar\Omega)}< \infty,$ where 
 $$\|u\|_{C^{k,\beta}(\bar\Omega)}= \sum_{|\alpha|\leq k} \sup_{x\in \bar \Omega} |D^\alpha u(x)|+\sum_{|\alpha|=k}\sup_{x,y\in \Omega,x\neq y} {\left\{ \frac{|D^\alpha u(x)-D^\alpha u(y)|}{|x-y|^\beta}\right\}}.$$
 We denote $\mathcal{M}(\Omega)$ as the space of all finite Radon measures on $\Omega$ and for $\mu \in \mathcal{M}(\Omega)$, we define the total variation norm of $\mu$ by $\|\mu\|_{\mathcal{M}(\Omega)}=\int_\Omega \d|\mu|$. 
\begin{definition}\label{def2.1 M}
We say a sequence $(\mu_n)$ in $\mathcal{M}(\Omega) $ converges to $\mu\in \mathcal{M}(\Omega)$, i.e. $\mu_n \rightharpoonup \mu$ in the sense of measure (\cite{F1999}) if for all $\phi \in C_c^\infty(\Omega)$, we have
\begin{center}
    $\int_\Omega \phi d \mu_n \rightarrow \int_\Omega \phi d\mu.$
    \end{center}
   \end{definition}
 \noindent  For is an open bounded subset $\Omega\subset\mathbb{R}^N$ with boundary $\partial\Omega$, we define, $\delta(x):=dist(x,\partial \Omega)$.
We conclude this section with the Definition of the following two truncation functions. For $k>0$, we define
\begin{equation*}
\begin{aligned}
    T_k(x)=&\max{\{-k, \min{\{k,x}\}\}} \text{ and } G_k(x)=(|x|-k)^+ sign(x), ~ \forall \,x \in \mathbb{R}.
\end{aligned}
\end{equation*}\\
Observe that $T_k(x)+G_k(x)=x$ for all $x \in\mathbb{R}$.

\section{Existence of weak solution to the problem \ref{eq2:problem}}\label{sec3}
In this section, we establish the existence of weak solutions to the problem \eqref{eq2:problem}. We begin with the definition of weak solutions to \eqref{eq2:problem}.
\begin{definition}\label{d weak}
For $0<\gamma \leq 1$, we say $u\in X_0^{s,1}(\Omega)$ is a weak solution of $\eqref{eq2:problem}$ if 
\begin{equation}\label{eq2.8:rel K}
    \text{ for every } K \Subset \Omega, \text{ there exists } C_K>0 \text{ such that } u\geq C_K >0 \end{equation} and 
    \begin{equation}\label{eq2.9:WF}
    \int_\Omega \nabla u\cdot\nabla \phi+\int_\Omega (-\Delta)^\frac{s}{2} u\cdot(-\Delta)^\frac{s}{2} \phi=\int_\Omega \frac{f \phi}{u^\gamma} + \int_\Omega \phi d\mu\,\,\,\,\,\, \forall ~\phi \in C_c^\infty(\Omega).
\end{equation}
For $\gamma>1$, we say $u\in X^{s,1}_{loc}(\Omega)$ is a weak solution to \eqref{eq2:problem} if $u$ satisfy \eqref{eq2.8:rel K} and \eqref{eq2.9:WF} with $T_k ^\frac{\gamma+1}{2}(u) \in X_0^{s,2}(\Omega) $ for every $k>0$.
\end{definition}
\noindent Due to the presence of a Radon measure in the weak formulation (refer Definition \ref{d weak}), we proceed with the weak convergence method. Let us consider the following sequence of problems.
\begin{equation}\label{eq2.10:SWP}
\begin{aligned}
    -\Delta u_n +(-\Delta)^s & u_n=\frac{f_n}{(u_n + \frac{1}{n})^{\gamma}}+\mu_n \text{ in } \Omega,\\
     & u_n =0 \text{ in } \mathbb R^N \setminus \Omega,
   \end{aligned}
    \end{equation}
    where $(\mu_n)$ is a sequence of nonnegative, bounded, smooth functions such that $\mu_n\rightharpoonup\mu$ in the sense of Definition \ref{def2.1 M} and $f_n$ is the truncation at level $n$ of $f$. We now consider the weak formulation of the equation \eqref{eq2.10:SWP}.
    \begin{equation}\label{eq2.11:SWF}
        \int_\Omega \nabla u_n\cdot\nabla \phi+\int_\Omega (-\Delta)^\frac{s}{2} u_n\cdot(-\Delta)^\frac{s}{2} \phi=\int_\Omega \frac{f_n \phi}{(u_n +\frac{1}{n})^\gamma} + \int_\Omega \phi d\mu_n\,\,\,\,\,\, \forall~ \phi \in C_c^\infty(\Omega).
    \end{equation}
We first prove the existence of a solution to \eqref{eq2.10:SWP}. We begin with the following lemma.    
    \begin{lemma} \label{lmm2.5}
        Problem \eqref{eq2.10:SWP} admits a nonnegative weak solution $u_n \in X_0 ^{s,2}(\Omega) \cap L^{\infty}(\Omega).$
    \end{lemma}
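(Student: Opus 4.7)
The plan is to establish existence of $u_n$ for the regularised problem \eqref{eq2.10:SWP} by Schauder's fixed point theorem on $L^2(\Omega)$, with an $L^\infty$ bound obtained afterwards by a Stampacchia-type truncation argument. Observe first that because $f_n=T_n(f)$ and $\mu_n$ is smooth and bounded, and because the singular factor satisfies $(v^{+}+\tfrac1n)^{\gamma}\geq n^{-\gamma}$, any ``frozen'' right-hand side of the form $\frac{f_n}{(v^{+}+1/n)^{\gamma}}+\mu_n$ belongs to $L^{\infty}(\Omega)$ with a bound $M_n:=n^{\gamma+1}+\|\mu_n\|_{L^\infty(\Omega)}$ that is independent of $v\in L^{2}(\Omega)$. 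This observation is the cornerstone of the argument.

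For each $v\in L^{2}(\Omega)$ I define $T(v):=w$, where $w$ is the unique solution of the linear mixed problem
\begin{equation*}
-\Delta w+(-\Delta)^{s}w=\frac{f_n}{(v^{+}+1/n)^{\gamma}}+\mu_n \text{ in }\Omega,\qquad w=0\text{ in }\mathbb{R}^{N}\setminus\Omega.
\end{equation*}
Existence and uniqueness of $w\in X_0^{s,2}(\Omega)$ follow from Lax--Milgram applied to the bilinear form
\begin{equation*}
\mathcal{B}(u,\phi)=\int_{\Omega}\nabla u\cdot\nabla\phi\,\d x+\int_{\Omega}(-\Delta)^{s/2}u\,(-\Delta)^{s/2}\phi\,\d x,
\end{equation*}
which by Theorem \ref{thm prop} is an inner product on the Hilbert space $X_0^{s,2}(\Omega)$, together with continuity of the linear functional $\phi\mapsto \int_\Omega g\phi$ for $g\in L^{\infty}(\Omega)\subset L^{2}(\Omega)$ via Theorem \ref{thm cpt}. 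Testing with $w$ itself and using Cauchy--Schwarz, Sobolev, and the uniform-in-$v$ bound $M_n$ of the source, I get $\|w\|_{X_0^{s,2}(\Omega)}\leq R_n$ for a constant $R_n$ depending only on $n$, $\Omega$, $\|f\|_{L^1}$, and $\|\mu_n\|_{L^\infty}$. Hence $T$ sends the whole space $L^{2}(\Omega)$ into the ball $B_{R_n}$ of $X_0^{s,2}(\Omega)$, which by the compact embedding $X_0^{s,2}(\Omega)\hookrightarrow L^{2}(\Omega)$ is relatively compact in $L^{2}(\Omega)$; so $T$ is compact.

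Continuity of $T\colon L^{2}(\Omega)\to L^{2}(\Omega)$ is the routine part: if $v_k\to v$ in $L^{2}(\Omega)$ then $v_k^{+}\to v^{+}$ a.e.\ along a subsequence, and the pointwise-bounded quotient $\frac{f_n}{(v_k^{+}+1/n)^{\gamma}}$ converges dominatedly to $\frac{f_n}{(v^{+}+1/n)^{\gamma}}$ in every $L^{p}(\Omega)$; subtracting the equations for $w_k=T(v_k)$ and $w=T(v)$ and testing with $w_k-w$ gives $w_k\to w$ in $X_0^{s,2}(\Omega)$, hence in $L^{2}(\Omega)$. Schauder's theorem then produces a fixed point $u_n\in X_0^{s,2}(\Omega)$ with $-\Delta u_n+(-\Delta)^{s}u_n=\frac{f_n}{(u_n^{+}+1/n)^{\gamma}}+\mu_n$. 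To upgrade $u_n^{+}$ to $u_n$, I test the equation with $u_n^{-}\in X_0^{s,2}(\Omega)$; nonnegativity of the right-hand side combined with the well-known facts $\int_{\Omega}\nabla u_n\cdot\nabla u_n^{-}=-\int_{\Omega}|\nabla u_n^{-}|^{2}$ and $\int_{\Omega}(-\Delta)^{s/2}u_n\cdot(-\Delta)^{s/2}u_n^{-}\leq -\|u_n^{-}\|^{2}_{W_0^{s,2}(\Omega)}$ (the latter from $(u_n(x)-u_n(y))(u_n^{-}(x)-u_n^{-}(y))\leq -|u_n^{-}(x)-u_n^{-}(y)|^{2}$) forces $u_n^{-}\equiv 0$.

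Finally, for the $L^{\infty}$ bound I perform a Stampacchia-type truncation: testing the equation with $G_k(u_n)\in X_0^{s,2}(\Omega)$ and using that $\int\nabla u_n\cdot\nabla G_k(u_n)=\int|\nabla G_k(u_n)|^{2}$ together with $\int (-\Delta)^{s/2}u_n\,(-\Delta)^{s/2}G_k(u_n)\geq \|G_k(u_n)\|_{W_0^{s,2}}^{2}\geq 0$, I obtain
\begin{equation*}
\int_{\Omega}|\nabla G_k(u_n)|^{2}\,\d x\leq M_n\int_{\{u_n>k\}}G_k(u_n)\,\d x.
\end{equation*}
Sobolev embedding (Lemma \ref{sob eql}) and Hölder on the right-hand side give a decay estimate of the measure of the super-level sets $\{u_n>k\}$ which, by the classical Stampacchia lemma, yields $\|u_n\|_{L^{\infty}(\Omega)}\leq C(n,N,\Omega,M_n)$. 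This completes the proof. The only step requiring real care is the Kato-type sign in the fractional part when showing $u_n\geq 0$ and when discarding the nonlocal contribution on super-level sets; both follow from the elementary pointwise inequalities for $(-\Delta)^{s}$ recalled in Section \ref{sec2}. All other estimates depend on $n$, which is acceptable at this stage; $n$-uniform bounds are the business of the subsequent sections.
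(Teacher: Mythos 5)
Your proposal is correct and follows essentially the same route as the paper: Schauder's fixed point theorem on $L^2(\Omega)$ with the frozen singular term, Lax--Milgram for the auxiliary linear problem, a uniform (in $v$, not in $n$) $L^\infty$ bound on the source, compactness via the embedding $X_0^{s,2}(\Omega)\hookrightarrow L^2(\Omega)$, and continuity via dominated convergence. The only cosmetic differences are that you use $v^+$ in place of the paper's $|v|$ when freezing the nonlinearity, and that you spell out the two steps the paper delegates to references — the $L^\infty$ bound via Stampacchia truncation (which the paper cites from Stampacchia \cite{S1995}) and the nonnegativity via testing with $u_n^-$ (which the paper obtains by invoking the maximum principle of \cite[Theorem 1.2]{BDVV2022}); both of your direct arguments are valid.
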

    \begin{proof}
     To prove the existence results, we use Schauder's fixed point theorem. For each $n\in \mathbb N$, we define 
    \begin{equation}\nonumber
    G:L^2(\Omega) \rightarrow L^2(\Omega),
    \end{equation} by $G(v)=w$. The map $G$ is well-defined. Indeed, from the Lax-Milgram lemma, we deduce that for every $v\in L^2(\Omega)$ there exists a unique weak solution $w\in X^{s,2}_0(\Omega)$ to the following problem:
    \begin{equation}\label{eq2.12:map}
\begin{aligned}
    -\Delta w +(-\Delta)^s & w=\frac{f_n}{(|v| + \frac{1}{n})^{\gamma}}+\mu_n\,\,  \text{in} \,\,\Omega,\\
     & w =0 \,\text{in} \, \mathbb R^N \setminus \Omega.
   \end{aligned}
    \end{equation}
    We remark that $X^{s,2}_0(\Omega)$ can be taken as the test functions space in the weak formulation \eqref{eq2.11:SWF}, since, $C_c^\infty(\Omega)$ is dense in $L^2(\Omega)$. To apply Schauder's fixed point theorem, we first prove that $G$ is bounded over $L^2(\Omega).$ Recall the following Poincar\'e inequalities,
    \begin{equation}
        \lambda_1 \int_\Omega |w|^2 \leq \int_\Omega |\nabla w|^2 \label{eq2.13:P L}
            \end{equation}~ \text{and}
            \begin{equation}
            \lambda_{1,s}\int_\Omega |w|^2 \leq \int_\Omega |(-\Delta)^\frac{s}{2}w|^2, \label{eq2.14:P FL}
             \end{equation}
             where $\lambda_1$ and $\lambda_{1,s}$ are the principle eigenvalue corresponding to the operators $(-\Delta)$ and $(-\Delta)^s$ respectively. Adding the inequalities \eqref{eq2.13:P L}, \eqref{eq2.14:P FL} and then using H\"older's inequality, we obtain
\begin{align}\nonumber
    (\lambda_1 +\lambda_{1,s})\int_\Omega |w|^2 & \leq \int_\Omega |\nabla w|^2 + \int_\Omega |(-\Delta)^\frac{s}{2}w|^2\\
   & =\int_\Omega \nabla w \cdot \nabla w + \int_\Omega (-\Delta)^\frac{s}{2}w \cdot (-\Delta)^\frac{s}{2}w \nonumber \\
   & =\int_\Omega \frac{f_n w}{(|v| +\frac{1}{n})^\gamma} + \int_\Omega \mu_n w \nonumber\\
   & \leq \int_\Omega n.n^\gamma |w| +C(n)\int_\Omega |w|\nonumber\\
   &= (n^{\gamma +1} +C(n)) \int _\Omega |w| \nonumber\\
   &\leq C(n^{\gamma +1} +C(n))\left(\int _\Omega |w|^2\right)^\frac{1}{2}.\label{eq2.15 inq}\,\,\,\,\
   \end{align}
    Therefore, we obtain
    \begin{equation}\label{eq2.16:G bd}
        \|w\|_{L^2(\Omega)} \leq \frac{C(n^{\gamma +1} +C(n))}{(\lambda_1 +\lambda_{1,s})}=C(n,\gamma), 
    \end{equation}
where $C(n,\gamma)$ is independent of $v$. We now prove the continuity of $G$ in $L^2(\Omega)$. Let $(v_k)$ be a sequence converging to $v$ in $L^2(\Omega)$. From the Lebesgue dominated convergence theorem, we have
\begin{equation}\nonumber
    \Bigg| \Bigg|\left(\frac{f_n}{(|v_k|+\frac{1}{n})^\gamma}+\mu_n\right)-\left(\frac{f_n}{(|v|+\frac{1}{n})^\gamma}+\mu_n\right) \Bigg| \Bigg|_{L^2(\Omega)} \longrightarrow 0 \text{ as }  k \rightarrow \infty,
\end{equation}
Therefore, using the uniqueness of weak solution, we obtain that  $G(v_k):=w_k$ converges to $G(v):=w$. Thus $G$ is continuous in $L^2(\Omega)$. Now inequalities \eqref{eq2.15 inq} and \eqref{eq2.16:G bd} give
\begin{align}\nonumber
    \int_\Omega |\nabla w|^2 + \int_\Omega |(-\Delta)^\frac{s}{2}w|^2 & \leq C(n^{\gamma +1}+C(n))\left(\int_\Omega |w|^2\right)^\frac{1}{2}\\
    &\leq C(n^{\gamma +1}+C(n))C(n,\gamma)\nonumber \\ 
    & =C(n,\gamma).\nonumber
    \end{align}
   Therefore, we have
   \begin{align} \nonumber
    \int_\Omega |\nabla G(v)|^2 + \int_\Omega |(-\Delta)^\frac{s}{2}G(v)|^2 =\int_\Omega |\nabla w|^2 + \int_\Omega |(-\Delta)^\frac{s}{2}w|^2 & \leq C(n,\gamma).
\end{align}
Thus, we get $$\|w\|_{X^{s,2}_0(\Omega)} =\|G(v)\|_{X^{s,2}_0(\Omega)} \leq  C(n,\gamma), \text{ for any  }v \in L^2(\Omega).$$
Therefore, using the compact embedding lemma, we conclude that $G(L^2(\Omega))$ is relatively compact in $ L^2(\Omega)$. Hence, we get that for each $n\in\mathbb{N}$, the map $G$ has a fixed point $u_n \in L^2(\Omega)$ which is a weak solution to \eqref{eq2.10:SWP} in $X_0^{s,2}(\Omega)$. The boundedness of $u_n$ follows from \cite{S1995}. Finally, since, $\left (\frac{f_n}{(|u_n| + \frac{1}{n})^{\gamma}}+\mu_n \right ) \geq 0$ then from the maximum principle \cite[Theorem 1.2]{BDVV2022} for $(-\Delta +(-\Delta)^s)$, we deduce that $u_n \geq 0$. This completes the proof.
\end{proof}
We now prove the boundedness of $(u_n)$ on relatively compact subsets of $\Omega$.
\begin{lemma}\label{lmm2.6}
    The sequence $(u_n)$ is such that  for every $K \Subset \Omega$ there exists $C_K$ (independent of $n$) such that $u_n \geq C_K > 0$, for {\it{a.e.}} in $K$, and for every $n$, where $ n \in \mathbb N. $
\end{lemma}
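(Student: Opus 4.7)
The plan is to show that the sequence $(u_n)$ is monotone non-decreasing in $n$ and then to apply the strong maximum principle to $u_1$ in order to bound $(u_n)$ from below by a strictly positive constant on each compact subset of $\Omega$.

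For the monotonicity step, I would assume (as is standard in this approximation scheme) that $(f_n)$ and $(\mu_n)$ are chosen non-decreasing in $n$: take $f_n = T_n(f)$ and let $(\mu_n)$ be an increasing smooth mollification of $\mu$. Then I would subtract the weak formulations \eqref{eq2.11:SWF} for $u_n$ and $u_{n+1}$ and test against $(u_n - u_{n+1})^+$, which is admissible in $X_0^{s,2}(\Omega)$ by the chain-rule for the positive part together with the density of $C_c^\infty(\Omega)$ used in the proof of Lemma \ref{lmm2.5}. The local bilinear form yields $\int_\Omega |\nabla(u_n - u_{n+1})^+|^2\d x$, and for the nonlocal part I would invoke the pointwise inequality $(a-b)(a^+ - b^+) \geq (a^+ - b^+)^2$, which produces the Gagliardo seminorm of $(u_n - u_{n+1})^+$. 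Hence the left-hand side dominates $\|(u_n - u_{n+1})^+\|_{X_0^{s,2}(\Omega)}^2$. On the set $\{u_n > u_{n+1}\}$ one has $u_n + \tfrac1n > u_{n+1} + \tfrac1{n+1}$, which combined with $f_n \le f_{n+1}$ yields
\[
\frac{f_n}{(u_n+\tfrac1n)^\gamma} - \frac{f_{n+1}}{(u_{n+1}+\tfrac1{n+1})^\gamma} \leq 0,
\]
and together with $\mu_n \leq \mu_{n+1}$ forces the right-hand side to be nonpositive. Thus $(u_n - u_{n+1})^+ \equiv 0$, i.e.\ $u_n \leq u_{n+1}$ a.e.

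For the positivity base case, Lemma \ref{lmm2.5} gives $u_1 \in X_0^{s,2}(\Omega) \cap L^\infty(\Omega)$ with
\[
-\Delta u_1 + (-\Delta)^s u_1 \;=\; \frac{f_1}{(u_1+1)^\gamma} + \mu_1 \;\geq\; 0 \quad\text{in } \Omega,
\]
and $u_1 = 0$ in $\mathbb{R}^N \setminus \Omega$. Since $f>0$ a.e., the quantity $f_1 = \min\{f,1\}$ is strictly positive on a set of positive measure, so the right-hand side does not vanish identically. I would then invoke the interior continuity of bounded weak solutions and the strong maximum principle for the mixed operator $-\Delta + (-\Delta)^s$, both now available in the mixed-operator literature (see in particular \cite{BDVV2022}), to conclude that $u_1 > 0$ pointwise in $\Omega$. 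Consequently, for every $K \Subset \Omega$ the constant $C_K := \inf_K u_1$ is strictly positive and independent of $n$, and Step~1 gives $u_n \geq u_1 \geq C_K$ a.e.\ in $K$ for every $n \in \mathbb{N}$.

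The main obstacle will be having at hand the strong maximum principle (and enough interior regularity to interpret it pointwise) for $-\Delta+(-\Delta)^s$ applied to $u_1$ with merely bounded right-hand side; the weak maximum principle of \cite[Theorem 1.2]{BDVV2022} quoted above gives nonnegativity, and the strong version promotes this to strict positivity. As a robust alternative, one can compare $u_1$ from below with $\varepsilon \varphi_1$, where $\varphi_1>0$ is the (continuous, strictly positive) first eigenfunction of $\mathcal{L}=-\Delta+(-\Delta)^s$ on $\Omega$, which directly furnishes a positive barrier on compacta. A secondary, routine point is the admissibility of $(u_n - u_{n+1})^+$ as a test function, which is handled by the same density argument as in Lemma \ref{lmm2.5}.
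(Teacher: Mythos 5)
Your overall strategy (establish a monotone comparison, then apply a strong maximum principle to the first element) is in the right spirit, but the monotonicity step as you propose it has a genuine gap, and the paper's proof circumvents precisely this difficulty by a different decomposition.

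You assume one can choose the approximating sequence $(\mu_n)$ non-decreasing in $n$; this is not possible in general. If $(g_n)$ is a non-decreasing sequence of nonnegative $L^1$ functions with $\sup_n\int_\Omega g_n<\infty$, then $g_n\uparrow g$ pointwise with $g\in L^1(\Omega)$, and $g_n\to g$ in $L^1(\Omega)$ by monotone convergence; hence $g_n\rightharpoonup g\,\d x$, an absolutely continuous measure. So no increasing smooth approximation can converge weakly-$*$ to a measure $\mu$ with nontrivial singular part (e.g.\ $\mu=\delta_a$, the case highlighted in the introduction). Consequently "$\mu_n\le\mu_{n+1}$" cannot be granted, and your comparison on the set $\{u_n>u_{n+1}\}$ breaks down: the difference of the $\mu$-terms has no sign. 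This is not a minor technicality but the core obstruction, since the whole point of the problem is to handle genuinely singular measures.

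The paper avoids this by not attempting to order $(u_n)$ at all. It introduces an auxiliary sequence $(w_n)$ solving the \emph{measure-free} problem $-\Delta w_n+(-\Delta)^s w_n=f_n(w_n+\tfrac1n)^{-\gamma}$ with $w_n=0$ in $\mathbb R^N\setminus\Omega$. Monotonicity of $(w_n)$ requires only $f_n\le f_{n+1}$ (automatic from $f_n=T_n(f)$) and $(a+\tfrac1n)^{-\gamma}\le(a+\tfrac1{n+1})^{-\gamma}$ — no condition on $\mu_n$. Then $w_n\ge w_1$, and the strong maximum principle applied to $w_1$ (which is bounded, with bounded nonnegative nontrivial right-hand side) gives $w_1\ge M_K>0$ on each $K\Subset\Omega$. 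Finally, the inequality $u_n\ge w_n$ is obtained by subtracting the two weak formulations and testing with $(u_n-w_n)^-$; here the \emph{only} property of $\mu_n$ that is used is $\mu_n\ge0$, since $\int_\Omega\mu_n(u_n-w_n)^-\le0$. Thus the measure term simply drops out with the right sign, rather than needing to be controlled by monotonicity. Your base-case argument and your alternative via the first eigenfunction $\varphi_1$ are fine and in line with what the paper uses elsewhere, but the monotonicity of $(u_n)$ should be replaced by the two-step comparison $u_n\ge w_n\ge w_1$.
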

\begin{proof}
  Consider the following sequence of problems:
\begin{equation}
    \begin{aligned}\label{eq2.17 SSP}
    -\Delta w_n +(-\Delta)^s & w_n=\frac{f_n}{(w_n + \frac{1}{n})^{\gamma}}\,\,  \text{in} \,\,\Omega,\\
     & w_n =0 \,\,\,\, \,\text{in} \, \mathbb R^N \setminus \Omega.
   \end{aligned}
   \end{equation}
 We show that for every weak solution $w_n$ of \eqref{eq2.17 SSP} and for all $K \Subset \Omega$, there exists a constant $C_K>0$, (independent of $n$), such that $w_n \geq C_K >0$ {\it{a.e.}} in $K$. Suppose $w_n$ and $w_{n+1}$ are two weak solutions of \eqref{eq2.17 SSP}, respectively. Then, we have 
\begin{align}
    \int_\Omega \nabla w_n\cdot \nabla\phi +\int_\Omega (-\Delta)^\frac{s}{2}w_n\cdot (-\Delta)^\frac{s}{2}\phi  &=  \int_\Omega f_{n}\bigg(w_n +\frac{1}{n}\bigg)^{-\gamma}\phi\label{eq2.18 NSS}\\
    &\leq \int_\Omega f_{n+1}\bigg(w_{n} +\frac{1}{n+1}\bigg)^{-\gamma}\phi.\label{eq2.19 NSSP}
    \end{align}
 and 
 \begin{equation}
 \begin{aligned}\label{eq2.20}
    \int_\Omega \nabla w_{n+1}\cdot \nabla\phi +\int_\Omega (-\Delta)^\frac{s}{2}w_{n+1}\cdot (-\Delta)^\frac{s}{2}\phi  &=  \int_\Omega f_{n+1}\bigg(w_{n+1} +\frac{1}{n+1}\bigg)^{-\gamma}\phi.
    \end{aligned}
 \end{equation}
Subtracting \eqref{eq2.20} from \eqref{eq2.19 NSSP} and putting $\phi=(w_n - w_{n+1})^+$ as test function, we deduce
\begin{align*}
 0 & \leq \int_\Omega \nabla(w_n-w_{n+1})\cdot \nabla(w_n-w_{n+1})^+ +\int_\Omega (-\Delta)^\frac{s}{2}(w_n-w_{n+1})\cdot (-\Delta)^\frac{s}{2}(w_n-w_{n+1})^+ \\ 
 & =\int_\Omega |\nabla(w_n-w_{n+1})^+|^2 + \int_\Omega |(-\Delta)^\frac{s}{2}(w_n-w_{n+1})^+|^2 \\ 
 & \leq  \int_\Omega f_{n+1}\bigg[\bigg(w_n +\frac{1}{n+1}\bigg)^{-\gamma}-\bigg(w_{n+1} +\frac{1}{n+1}\bigg)^{-\gamma}\bigg](w_n-w_{n+1})^+ \leq 0.
 \end{align*}
 Therefore, $\|(w_n-w_{n+1})^+\|_{X^{s,2}_0(\Omega)}=0$, which implies $(w_n-w_{n+1})^+=0$ {\it{a.e.}} in $\Omega$. Hence, we conclude that the sequence $(w_n)$ is monotonically increasing, $w_{n+1}\geq w_n~\forall~ n\in\mathbb{N}$. In particular, $w_n\geq w_1$, where $w_1\in L^\infty(\Omega)$ (refer to Garain and Ukhlov \cite{GU2022}) is a weak solution to the following problem
 \begin{equation}\label{eq2.21}
     -\Delta w_1 + (-\Delta)^sw_1=\frac{f_1}{(w_1+1)^\gamma}.
 \end{equation}
 Clearly,  $\frac{f_1}{(w_1+1)^\gamma} \in L^\infty(\Omega)$ by the definition of $f_1$. Let $\|w_1\|_{L^\infty(\Omega)} \leq M$. Then we derive (in a weak sense) 
 \begin{align*}
     -\Delta w_1 + (-\Delta)^sw_1 =\frac{f_1}{(w_1+1)^\gamma} & \geq \frac{f_1}{(1+\|w_1\|_{L^\infty(\Omega)})^\gamma}\geq \frac{f_1}{(1+M)^\gamma} >0.
 \end{align*}
 Since $\frac{f_1}{(1+M)^\gamma}\not\equiv 0$ in $K$, we get $w_1 > 0$ by using the strong maximum principle over $(-\Delta+((-\Delta)^s))$\cite{PFR2019}. Since $K\Subset\Omega$, then there exists a constant $M_K>0$ such that $w_1 \geq M_K > 0$. Thus using the monotinicity of $(w_n)$ we get $w_n \geq w_1 \geq M_K > 0$ for all $n\in\mathbb{N}$. 

We conclude the proof with the fact that $u_n \geq w_n$ {\it{a.e.}} in $K$. 
Indeed, subtracting \eqref{eq2.18 NSS} from \eqref{eq2.11:SWF} and taking $\phi=(u_n-w_n)^-$ as the test function, we get
\begin{align*}
    0&\geq -\int_\Omega \|\nabla(u_n-w_{n})^-\|^2 -\int_\Omega \|(-\Delta)^\frac{s}{2}(u_n-w_{n})^-\|^2\\ 
    &=\int_\Omega  \nabla(u_n-w_n)\cdot \nabla(u_n-w_n)^- +\int_\Omega (-\Delta)^\frac{s}{2}(u_n-w_n) \cdot (-\Delta)^\frac{s}{2}(u_n-w_n)^{-}\\
     &=\int_\Omega f_{n}\bigg[\bigg(u_n +\frac{1}{n}\bigg)^{-\gamma}-\bigg(w_{n} +\frac{1}{n}\bigg)^{-\gamma}\bigg](u_n-w_{n})^- +\int_\Omega \mu_n (u_n-w_n)^{-}\\ 
     &\geq \int_\Omega f_{n}\bigg[\bigg(u_n +\frac{1}{n}\bigg)^{-\gamma}-\bigg(w_{n} +\frac{1}{n}\bigg)^{-\gamma}\bigg](u_n-w_{n})^-  \geq 0.
\end{align*}
 Thus, we obtain $u_n\geq w_n$ {\it{a.e.}} in $\Omega$. Therefore, for every $K\Subset\Omega$, there exists $C_K>0$ such that $u_n\geq C_K>0$ {\it{a.e.}} in $K$. This completes the proof.
 \end{proof}
\noindent We now prove the existence of a weak solution to the problem \eqref{eq2:problem}. We divide the proof in two cases with $0<\gamma\leq 1$ and $\gamma>1$.
\subsection{Existence result when \texorpdfstring{$0<\gamma\leq1$}{0<γ≤1}}\text{ }\\ 
We establish the existence results by using the embedding of the Marcinkiewicz space \eqref{embed2.1} together with the following lemma. 
 \begin{lemma}\label{lmn3.1}
     For each $n\in\mathbb{N}$, let $u_n$ be a weak solution to the problem \eqref{eq2.10:SWP} for $0<\gamma \leq 1$. Then $u_n$ is bounded in $X_0^{s,q}(\Omega)$ for every $q<\frac{N}{N-1}$.
 \end{lemma}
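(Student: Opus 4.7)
The plan is to adapt the classical Boccardo--Gallou\"et truncation method to the mixed local-nonlocal setting. The first step is to test the weak formulation \eqref{eq2.11:SWF} with $\phi = T_k(u_n)$, which is admissible since $u_n \in X_0^{s,2}(\Omega)\cap L^\infty(\Omega)$ by Lemma \ref{lmm2.5}. The local part contributes $\int_\Omega |\nabla T_k(u_n)|^2 \d x$, and the fractional part is controlled below by the Gagliardo seminorm of $T_k(u_n)$ via the elementary pointwise inequality $(T_k(a)-T_k(b))^2 \leq (a-b)(T_k(a)-T_k(b))$, which is valid because $T_k$ is monotone and $1$-Lipschitz. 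This yields
\begin{equation*}
\|T_k(u_n)\|_{X_0^{s,2}(\Omega)}^2 \leq \int_\Omega \frac{f_n\, T_k(u_n)}{(u_n+1/n)^\gamma} \d x + \int_\Omega T_k(u_n)\, \d \mu_n.
\end{equation*}
For $0<\gamma\le 1$, splitting on $\{u_n\le k\}$ and $\{u_n>k\}$ shows that the integrand in the singular term is pointwise dominated by $(k+1)^{1-\gamma} f_n$, so the first integral is at most $C k^{1-\gamma}\|f\|_{L^1(\Omega)}$, while the second is at most $k\|\mu\|_{\mathcal{M}(\Omega)}$. Consequently $\|T_k(u_n)\|_{X_0^{s,2}(\Omega)}^2 \le C k$ for every $k\geq 1$, uniformly in $n$.

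The second step extracts Marcinkiewicz estimates. The Sobolev embedding from Theorem \ref{thm cpt} gives $\|T_k(u_n)\|_{L^{2^*}(\Omega)}^2 \leq C k$, whence Chebyshev's inequality yields
\begin{equation*}
|\{u_n>k\}| \le k^{-2^*} \|T_k(u_n)\|_{L^{2^*}(\Omega)}^{2^*} \le C k^{-N/(N-2)},
\end{equation*}
so $(u_n)$ is uniformly bounded in $M^{N/(N-2)}(\Omega)$. For the gradient, the standard two-term splitting
\begin{equation*}
|\{|\nabla u_n|>\lambda\}| \leq \frac{1}{\lambda^2}\int_\Omega |\nabla T_k(u_n)|^2 \d x + |\{u_n>k\}| \leq C\Bigl(\frac{k}{\lambda^2}+k^{-N/(N-2)}\Bigr)
\end{equation*}
is optimized by the balancing choice $k=\lambda^{(N-2)/(N-1)}$, delivering $|\{|\nabla u_n|>\lambda\}|\leq C\lambda^{-N/(N-1)}$. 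Hence $|\nabla u_n|\in M^{N/(N-1)}(\Omega)$ uniformly in $n$.

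The final step converts these weak-type bounds into strong ones. By the embedding \eqref{embed2.1}, $|\nabla u_n|$ is bounded in $L^q(\Omega)$ for every $q<N/(N-1)$, so $u_n$ is bounded in $W_0^{1,q}(\Omega)$ via Poincar\'e's inequality. Finally, the comparison \eqref{sg-emb} upgrades this $W_0^{1,q}$ bound to a uniform bound in $X_0^{s,q}(\Omega)$, as required. The main technical point to watch is the handling of the nonlocal term in the energy estimate: one has to verify that the truncation inequality produces a genuine lower bound by the full Gagliardo seminorm of $T_k(u_n)$ when the double integral is taken over $\mathbb{R}^N\times\mathbb{R}^N$ (with $u_n$ extended by zero outside $\Omega$); once this is in hand, the rest of the argument parallels the purely local computations of \cite{OP2016} and the nonlocal ones of \cite{GCG2019}.
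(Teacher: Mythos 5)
Your proof is correct and follows the same Boccardo--Gallou\"et truncation strategy as the paper up to the point of estimating $|\{u_n>k\}|$ and $|\{|\nabla u_n|>\lambda\}|$, but then you take a genuinely cleaner route to the conclusion. The paper proves a second, parallel Marcinkiewicz estimate on $|(-\Delta)^{s/2}u_n|$ (splitting $\{|(-\Delta)^{s/2}u_n|\geq t\}$ into $\{u_n<k\}$ and $\{u_n\geq k\}$ exactly as for the gradient) before invoking \eqref{embed2.1} for both quantities, whereas you observe that once $u_n$ is bounded in $W_0^{1,q}(\Omega)$, the comparison inequality \eqref{sg-emb} (equivalently, Remark \ref{rm 2.1}) automatically controls the Gagliardo seminorm, so a single Marcinkiewicz estimate on the gradient suffices. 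This is more economical, and it also quietly sidesteps a weak point in the paper's estimate \eqref{eq3.10C}, where the identity $\int_{I_3}|(-\Delta)^{s/2}u_n|^2=\int_{I_3}|(-\Delta)^{s/2}T_k(u_n)|^2$ is asserted but does not hold in general (unlike $\nabla T_k(u_n)=\nabla u_n\chi_{\{u_n<k\}}$, truncation does not commute with the nonlocal half-power). Your handling of the nonlocal term in the energy estimate is also more careful than the paper's: you use the pointwise inequality $(T_k(a)-T_k(b))^2\leq (a-b)(T_k(a)-T_k(b))$ to obtain the correct \emph{inequality} $[T_k(u_n)]_{W^{s,2}}^2\leq\int_\Omega(-\Delta)^{s/2}u_n\cdot(-\Delta)^{s/2}T_k(u_n)$ in \eqref{eq3.1 BTF}, where the paper writes an equality. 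Both proofs yield the same conclusion, but yours is tighter and avoids the avoidable step.
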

 \begin{proof}
 We first claim that, $$\int_\Omega|\nabla T_k(u_n)|^2 +\int_\Omega |(-\Delta)^\frac{s}{2}T_k(u_n)|^2 \leq Ck,~\forall \, k\geq 1, $$  where $C>0$ is a fixed constant. Let us choose $\phi=T_k(u_n)$ as a test function in the weak formulation \eqref{eq2.11:SWF}. Then we get
\begin{align}\label{eq3.1 BTF}
    \int_\Omega |\nabla T_k(u_n)|^2+\int_\Omega |(-\Delta)^\frac{s}{2} T_k(u_n)|^2 & =\int_\Omega \nabla u_n \cdot \nabla T_k(u_n)+\int_\Omega (-\Delta)^\frac{s}{2} u_n\cdot(-\Delta)^\frac{s}{2} T_k(u_n)\nonumber\\ 
    & = \int_\Omega \frac{f_n T_k(u_n)}{(u_n +\frac{1}{n})^\gamma} + \int_\Omega \mu_n T_k(u_n).
\end{align}
Now,            
\begin{align}\label{sg3.2}
    \int_\Omega \frac{f_n T_k(u_n)}{(u_n +\frac{1}{n})^\gamma} &=\int_{(u_n +\frac{1}{n})\leq k}\frac{f_n T_k(u_n)}{(u_n +\frac{1}{n})^\gamma} +\int_{(u_n +\frac{1}{n})\geq k} \frac{f_n T_k(u_n)}{(u_n +\frac{1}{n})^\gamma}\nonumber\\
    & \leq \int_{(u_n +\frac{1}{n})\leq k}\frac{f_n u_n}{(u_n +\frac{1}{n})^\gamma}+\int_{(u_n +\frac{1}{n})\geq k}\frac{f k}{k^\gamma} \nonumber\\
   &=\int_{(u_n +\frac{1}{n})\leq k}\frac{f_n u_n^\gamma u_n^{1-\gamma}}{(u_n + \frac{1}{n})^\gamma}+k^{1-\gamma} \int_{(u_n +\frac{1}{n})\geq k} f  \nonumber\\ 
   & \leq \int_{(u_n +\frac{1}{n})\leq k} f u_n^{1-\gamma}+k^{1-\gamma} \int_{(u_n +\frac{1}{n})\geq k} f  \nonumber\\
   & \leq \int_{(u_n +\frac{1}{n})\leq k} f k^{1-\gamma}+k^{1-\gamma} \int_{(u_n +\frac{1}{n})\geq k} f  \nonumber\\ 
   & =k^{1-\gamma} \int_\Omega f  \nonumber\\ 
   & = C k^{1-\gamma}  \nonumber\\
   &\leq Ck, \,\,\,\, \forall \,k \geq 1.
   \end{align}
   Also,
   \begin{align}\label{sg3.3}
    \int_\Omega \mu_n T_k(u_n) \leq k\|\mu_n\|_{L^1(\Omega)} \leq Ck.
    \end{align} 
Thus, using the inequalities \eqref{sg3.2} and \eqref{sg3.3} in \eqref{eq3.1 BTF}, we get
\begin{equation}\label{eq3.4 BTN}
\int_\Omega |\nabla T_k(u_n)|^2+\int_\Omega |(-\Delta)^\frac{s}{2} T_k(u_n)|^2 \leq Ck, \,\,\,\forall \, k \geq 1. 
\end{equation}
From the mixed Sobolev inequality \eqref{eq2.8S} with $p=2$, we obtain
\begin{align}\label{eq3.5 BSI}
 \frac{1}{C^2}\bigg(\int_\Omega |T_k(u_n)|^{2^*}\bigg)^\frac{2}{{2^*}} & \leq\int_\Omega |\nabla T_k(u_n)|^2+\int_\Omega |(-\Delta)^\frac{s}{2}T_k(u_n)|^2 \leq Ck, \,\,\,\, \forall \, k\geq 1. 
 \end{align}
We now split the proof in the line of obtaining the boundedness of $\nabla u_n$ and ($-\Delta)^\frac{s}{2}u_n$ in $M^\frac{N}{N-1}(\Omega),$ that is we prove that
\begin{align*}
    m(\{|\nabla u_n| \geq t\}) &\leq \frac{C}{t^\frac{N}{N-1}}~\text{ and }
    m(\{|(-\Delta)^\frac{s}{2} u_n| \geq t\}) \leq \frac{C}{t^\frac{N}{N-1}},~\forall~ t \geq 1,
\end{align*}
where $C$ is constant and $m$ refers to the Lebesgue measure. We use the following set of inclusions: 
\begin{align*}
{\{|\nabla u_n| \geq t}\} & ={\{|\nabla u_n| \geq t,u_n<k}\} \cup {\{|\nabla u_n| \geq t,u_n \geq k}\}\\ 
& \subset {\{|\nabla u_n| \geq t,u_n<k}\} \cup {\{u_n \geq k}\} \subset \Omega
\end{align*} 
and
\begin{align*}
{\{|(-\Delta)^\frac{s}{2} u_n| \geq t}\} & ={\{|(-\Delta)^\frac{s}{2} u_n| \geq t,u_n<k}\} \cup {\{|(-\Delta)^\frac{s}{2} u_n| \geq t,u_n \geq k}\}\\ 
& \subset {\{|(-\Delta)^\frac{s}{2} u_n| \geq t,u_n<k}\} \cup {\{u_n \geq k}\} \subset \Omega.
\end{align*}
Thus, by using the monotone and sub-additive properties of Lebesgue measure $m$, we obtain
\begin{align}\label{eq3.6 MI}
    m({\{|\nabla u_n| \geq t}\}) \leq m({\{|\nabla u_n| \geq t,u_n<k}\}) + m({\{u_n \geq k}\})
\end{align}
and 
\begin{align}\label{eq3.7 MI}
    m({\{|(-\Delta)^\frac{s}{2} u_n| \geq t}\}) \leq m({\{|(-\Delta)^\frac{s}{2} u_n| \geq t,u_n<k}\}) + m({\{u_n \geq k}\}).
\end{align}
We now estimate the terms in the above two inequalities \eqref{eq3.6 MI} and \eqref{eq3.7 MI}. Let
\begin{align*}
    I_1&=\{x \in \Omega:u_n \geq k\},\\
    I_2&=\{|\nabla u_n| \geq t,u_n<k\}~\text{ and }\\
    I_3&=\{|(-\Delta)^\frac{s}{2}u_n| \geq t,u_n<k\}.
\end{align*}
$\bullet$ {\bf{Estimate on $I_1$.}} Recall the Definition of $T_k$. Observe that $T_k(u_n)=k$ on $I_1$. Therefore, using \eqref{eq3.5 BSI}, we get
 \begin{align*}
     C^2Ck \geq \bigg(\int_{I_1}|T_k(u_n)|^{2^*}\bigg)^\frac{2}{2^*}=\bigg(\int_{I_1} k^{2^*}\bigg)^\frac{2}{2^*}=k^2 m(I_1)^\frac{2}{2^*}=k^2 m(\{u_n\geq k\})^\frac{2}{2^*},~\forall\, k\geq 1.
 \end{align*}
 Thus, we have
 \begin{align}\label{eq3.8A}
    m(\{u_n\geq k\}) & \leq \frac{C}{k^\frac{2^*}{2}} =\frac{C}{k^\frac{N}{N-2}},\,\,\,\forall \, k \geq 1.
 \end{align}
Therefore, ($u_n$) is bounded in the Marcinkiewicz space $M^\frac{N}{N-2}(\Omega)$.\\
$\bullet$ {\bf{Estimate on $I_2$.}} On using \eqref{eq3.4 BTN}, we get 
 \begin{align}\label{eq3.9B}
     m\bigg(\{|\nabla u_n| \geq t,u_n<k\}\bigg) &\leq \frac{1}{t^2} \int_{I_2}|\nabla u_n|^2 =\frac{1}{t^2} \int_{I_2}|\nabla T_k(u_n)|^2\nonumber\\ 
      & \leq \frac{1}{t^2} \bigg[ \int_{\Omega}|\nabla T_k(u_n)|^2+\int_{\Omega}|(-\Delta)^\frac{s}{2}T_k(u_n)|^2\bigg]\nonumber\\
      &\leq \frac{Ck}{t^2}, \,\,\,\forall\, k \geq 1.
      \end{align}
$\bullet$ {\bf{Estimate on $I_3$.}}
      Similar to the above estimate, using \eqref{eq3.4 BTN}, we have 
 \begin{align}\label{eq3.10C}
     m\bigg(\{|(-\Delta)^\frac{s}{2}u_n| \geq t,u_n<k\}\bigg) &\leq \frac{1}{t^2} \int_{I_3}|(-\Delta)^\frac{s}{2}u_n|^2=\frac{1}{t^2} \int_{I_3}|(-\Delta)^\frac{s}{2}T_k(u_n)|^2\nonumber\\ 
      & \leq \frac{1}{t^2} \bigg[ \int_{\Omega}|\nabla T_k(u_n)|^2+\int_{\Omega}|(-\Delta)^\frac{s}{2}T_k(u_n)|^2\bigg]\nonumber\\
      &\leq \frac{Ck}{t^2}, \,\,\,\forall\, k \geq 1.
      \end{align}    
 Now, combining the inequalities \eqref{eq3.8A}, \eqref{eq3.9B} with \eqref{eq3.6 MI}, we deduce
 \begin{align}\label{eq3.11D}
    m({\{|\nabla u_n| \geq t}\}) &\leq m({\{|\nabla u_n| \geq t,u_n<k}\}) + m({\{u_n \geq k}\})\nonumber\\ 
    & \leq \frac{Ck}{t^2}+ \frac{C}{k^\frac{N}{N-2}},\,\,\,\forall~ k\, \geq 1.
     \end{align}
Again the inequalities \eqref{eq3.8A}, \eqref{eq3.10C} combined with \eqref{eq3.7 MI}, give
 \begin{align}\label{eq3.12E}
    m({\{|(-\Delta)^\frac{s}{2} u_n| \geq t}\}) &\leq m({\{|(-\Delta)^\frac{s}{2} u_n| \geq t,u_n<k}\}) + m({\{u_n \geq k}\})\nonumber\\ 
    & \leq \frac{Ck}{t^2}+ \frac{C}{k^\frac{N}{N-2}}, \,\,\,\forall~ k\, \geq 1.
     \end{align}
Choosing $k=t^\frac{N-2}{N-1}$ in \eqref{eq3.11D} and \eqref{eq3.12E}, we get
   \begin{align}
       m({\{|\nabla u_n| \geq t}\})&\leq \frac{Ct^\frac{N-2}{N-1}}{t^2}+ \frac{C}{t^\frac{N}{N-1}} \nonumber\\
       & \leq \frac{C}{t^\frac{N}{N-1}},\,\,\,\forall\, t \geq 1
       \end{align}
       and 
        \begin{align}
       m({\{|(-\Delta)^\frac{s}{2} u_n| \geq t}\})&\leq \frac{Ct^\frac{N-2}{N-1}}{t^2}+ \frac{C}{t^\frac{N}{N-1}} \nonumber\\
       & \leq \frac{C}{t^\frac{N}{N-1}},\,\,\,\forall\, t \geq 1.
       \end{align}
       Therefore, we conclude that $\nabla u_n$ and $(-\Delta)^\frac{s}{2} u_n$ are bounded in the Marcinkiewicz space $M^\frac{N}{N-1}(\Omega)$. Hence, using the embedding in \eqref{embed2.1}, we get the sequence $(u_n)$ is bounded in the space $X_0^{s,q}(\Omega)$ for every $q < \frac{N}{N-1}.$ This completes the proof.
       \end{proof}
       \begin{theorem}\label{thm3.1<1}
           Let $0<\gamma \leq 1$. Then there exists a weak solution $u\in X_0^{s,q}(\Omega)$ to the problem \eqref{eq2:problem} for every $ q < \frac{N}{N-1}.$  
       \end{theorem}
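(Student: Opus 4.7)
The plan is to build $u$ as a weak limit of the approximating solutions $u_n$ produced by Lemma \ref{lmm2.5}, and then pass to the limit in the weak formulation \eqref{eq2.11:SWF}. By Lemma \ref{lmn3.1}, the sequence $(u_n)$ is uniformly bounded in $X_0^{s,q}(\Omega)$ for every $q\in(1,\tfrac{N}{N-1})$. Fixing such a $q$, reflexivity of $X_0^{s,q}(\Omega)$ (Theorem \ref{thm prop}) and the compact embedding $X_0^{s,q}(\Omega)\hookrightarrow L^q(\Omega)$ (Theorem \ref{thm cpt}) yield, up to a subsequence, some $u\in X_0^{s,q}(\Omega)$ with $u_n\rightharpoonup u$ in $X_0^{s,q}(\Omega)$, $u_n\to u$ strongly in $L^q(\Omega)$ and pointwise a.e.\ in $\Omega$. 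A standard diagonal extraction over a countable set of exponents $q\nearrow \tfrac{N}{N-1}$ produces a single $u$ belonging to $X_0^{s,q}(\Omega)$ for every $q<\tfrac{N}{N-1}$.

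Since by Lemma \ref{lmm2.6} we have $u_n\geq C_K>0$ a.e.\ on each $K\Subset\Omega$ uniformly in $n$, the pointwise convergence transfers this bound to the limit: $u\geq C_K>0$ a.e.\ on $K$, so condition \eqref{eq2.8:rel K} is satisfied. Next I pick an arbitrary test function $\phi\in C_c^\infty(\Omega)$ and pass to the limit term by term in \eqref{eq2.11:SWF}. The local diffusion term is handled by the weak convergence $\nabla u_n\rightharpoonup\nabla u$ in $L^q(\Omega)$ together with $\nabla\phi\in L^{q'}(\Omega)$. For the nonlocal term, the weak convergence of $u_n$ in $X_0^{s,q}(\Omega)$ gives $(-\Delta)^{s/2}u_n\rightharpoonup(-\Delta)^{s/2}u$ in $L^q(\Omega)$, while $(-\Delta)^{s/2}\phi$ lies in every Lebesgue class on $\mathbb{R}^N$ for $\phi\in C_c^\infty(\Omega)\subset\mathcal{S}$, so the pairing passes to the limit. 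The measure term $\int_\Omega\phi\,d\mu_n\to\int_\Omega\phi\,d\mu$ is immediate from the convergence $\mu_n\rightharpoonup\mu$ in the sense of Definition \ref{def2.1 M}.

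The delicate step, and the only place the singular structure enters, is the convergence of $\int_\Omega f_n\phi\,(u_n+\tfrac1n)^{-\gamma}$. Set $K=\mathrm{supp}(\phi)\Subset\Omega$. On $K$, Lemma \ref{lmm2.6} gives the uniform lower bound $u_n+\tfrac1n\geq C_K$, hence
\[
\left|\frac{f_n\,\phi}{(u_n+\tfrac1n)^\gamma}\right|\leq \frac{\|\phi\|_{L^\infty}}{C_K^\gamma}\,f\quad\text{a.e.\ on }K,
\]
and the right-hand side is an $L^1$ majorant independent of $n$. Since $f_n\to f$ a.e.\ in $\Omega$, $u_n\to u$ a.e.\ in $\Omega$, and $u>0$ a.e.\ on $K$, the integrand converges pointwise to $f\phi/u^\gamma$; Lebesgue's dominated convergence theorem then yields convergence of the integrals. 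Combining the four convergences identifies $u$ as a weak solution in the sense of Definition \ref{d weak}, which finishes the proof.

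The main obstacle is precisely this last passage to the limit, because a priori $1/u_n^\gamma$ could blow up. The uniform strict positivity on compacta furnished by Lemma \ref{lmm2.6} is exactly what is needed to tame it; once this is in hand, the rest is a linear passage to the limit, powered by the quantitative estimates of Lemma \ref{lmn3.1} and the embeddings into Marcinkiewicz spaces.
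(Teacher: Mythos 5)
Your proposal follows essentially the same route as the paper's proof: weak compactness from Lemma \ref{lmn3.1}, pointwise a.e.\ convergence from the compact embedding, the uniform local lower bound from Lemma \ref{lmm2.6} to furnish an $L^1$ majorant, and dominated convergence to handle the singular term, with the linear and measure terms passing trivially. The only additions — the explicit diagonal argument to obtain one limit $u$ for all $q$ and the explicit verification of condition \eqref{eq2.8:rel K} — are details the paper leaves implicit, so the two arguments are materially the same.
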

       \begin{proof}
       From Lemma \ref{lmn3.1}, it follows that there exists $u\in X_0^{s,q}(\Omega)$ such that the sequence $(u_n)$ converges weakly to $u\in X_0^{s,q}(\Omega)$ for every $q<\frac{N}{N-1}$. That is,
       %From the above Lemma \ref{lmn3.1}, we say that there exists $u$ such that the sequence $(u_n)$ converges weakly to $u$ in $X_0^{s,q}(\Omega)$ for every $ q < \frac{N}{N-1}$. This implies that for $\phi \in C_c^\infty(\Omega)$ we have,
       \begin{equation}\nonumber
           \lim_{n\rightarrow \infty} \int_\Omega \nabla u_n \cdot \nabla \phi+\int_\Omega (-\Delta)^\frac{s}{2} u_n \cdot (-\Delta)^\frac{s}{2} \phi = \int_\Omega \nabla u \cdot \nabla \phi+\int_\Omega (-\Delta)^\frac{s}{2} u \cdot (-\Delta)^\frac{s}{2} \phi, ~\forall\,\phi \in C_c^\infty(\Omega)
       \end{equation}
       Moreover, by compactness embedding $X_0^{s,q}(\Omega)\hookrightarrow L^1(\Omega)$, we  assert that $u_n$ converges to $u$ strongly in $L^1(\Omega)$ and $u_n$ converges to $u$ {\it{a.e.}} in $\Omega$. Now take $\phi \in C_c^\infty(\Omega)$ and $K=\{x\in \Omega:\phi(x)\neq 0\} \subset supp(\phi)$. Thus from Lemma \ref{lmm2.6}, there exists $C_K$ such that $u_n\geq C_K>0$. Therefore,
       \begin{align*}
           0 \leq \bigg| \frac{f_n\phi}{(u_n+\frac{1}{n})^\gamma}\bigg| \leq \frac{\|\phi\|_{L^\infty(\Omega)}f}{C_K^\gamma}.
       \end{align*}
       Applying the Lebesgue dominated converges theorem, we get 
       \begin{equation*}
           \lim_{n\rightarrow \infty} \int_\Omega \frac{f_n\phi}{(u_n+\frac{1}{n})^\gamma} =\int_\Omega \frac{f\phi}{u^\gamma}.
       \end{equation*}
       Thus, we can pass to the limit as $n \rightarrow \infty$ in the last term of \eqref{eq2.11:SWF} involving $\mu_n$. Therefore, passing to the limit as $n\rightarrow\infty$ in \eqref{eq2.11:SWF}, we obtain that there exists a weak solution to the problem \eqref{eq2:problem} in $X_0^{1,q}(\Omega)$ for every $q < \frac{N}{N-1}$.
       \end{proof}
\subsection{Existence result when \texorpdfstring{$\gamma>1$}{γ>1}}\text{ }\\
Since $\gamma>1$ is the strong singular case, we aim to obtain local estimates on $u_n$ in $X_0^{s,2}(\Omega)$. We first establish a global estimate on $T_k^\frac{\gamma+1}{2}(u_n)$ in $X_0^{s,2}(\Omega)$ in a weak sense to make sense of the boundary values of $u$.
     \begin{lemma}\label{lmn3.2>1}
        Let $u_n$ be a weak solution of the problem \eqref{eq2.10:SWP} for $\gamma > 1$. Then  $T_k^\frac{\gamma+1}{2}(u_n)$ is bounded in $X_0^{s,2}(\Omega)$ for every fixed $k>0$. 
     \end{lemma}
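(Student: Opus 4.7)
The plan is to use $T_k^{\gamma}(u_n)$ as a test function in the weak formulation \eqref{eq2.11:SWF} and to apply a pointwise algebraic inequality that converts the resulting bilinear forms into the square of a Gagliardo seminorm of $T_k^{(\gamma+1)/2}(u_n)$. Since $u_n \in X_0^{s,2}(\Omega) \cap L^{\infty}(\Omega)$ by Lemma \ref{lmm2.5} and the map $t \mapsto T_k^{\gamma}(t)$ is Lipschitz with constant $\gamma k^{\gamma-1}$, we have $T_k^{\gamma}(u_n) \in X_0^{s,2}(\Omega)$, so it is an admissible test function by the density of $C_c^{\infty}(\Omega)$ in $X_0^{s,2}(\Omega)$.

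\textbf{Local term.} Inserting $\phi = T_k^{\gamma}(u_n)$ in the classical Dirichlet form and using the chain rule for Sobolev functions gives
\begin{equation*}
\int_\Omega \nabla u_n \cdot \nabla T_k^{\gamma}(u_n)\,\d x \;=\; \gamma\!\int_\Omega T_k^{\gamma-1}(u_n)\,|\nabla T_k(u_n)|^2\,\d x \;=\; \frac{4\gamma}{(\gamma+1)^2}\!\int_\Omega \bigl|\nabla T_k^{(\gamma+1)/2}(u_n)\bigr|^2\,\d x .
\end{equation*}

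\textbf{Nonlocal term.} For the fractional bilinear form, I would invoke the elementary inequality
\begin{equation*}
(a-b)\bigl(T_k^{\gamma}(a)-T_k^{\gamma}(b)\bigr)\;\geq\; \frac{4\gamma}{(\gamma+1)^2}\Bigl(T_k^{(\gamma+1)/2}(a)-T_k^{(\gamma+1)/2}(b)\Bigr)^{2}, \qquad a,b\geq 0,
\end{equation*}
which is the classical power-function inequality $(a-b)(a^{\gamma}-b^{\gamma})\geq \frac{4\gamma}{(\gamma+1)^2}(a^{(\gamma+1)/2}-b^{(\gamma+1)/2})^{2}$ when $a,b\in[0,k]$, trivially holds when $a,b\geq k$, and in the mixed case $a>k>b$ follows from $a-b\geq k-b$ combined with the classical inequality applied to $(k,b)$. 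Integrating this pointwise bound against $|x-y|^{-(N+2s)}$ over $\Omega\times\Omega$ yields
\begin{equation*}
\int_\Omega (-\Delta)^{\frac{s}{2}}u_n\,(-\Delta)^{\frac{s}{2}} T_k^{\gamma}(u_n)\,\d x \;\geq\; \frac{4\gamma}{(\gamma+1)^2}\int_\Omega \bigl|(-\Delta)^{\frac{s}{2}} T_k^{(\gamma+1)/2}(u_n)\bigr|^{2}\,\d x .
\end{equation*}

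\textbf{Right-hand side.} Since $T_k(u_n)\leq u_n \leq u_n+\tfrac{1}{n}$, we have $T_k^{\gamma}(u_n)\leq (u_n+\tfrac{1}{n})^{\gamma}$, so the singular term is controlled uniformly by
\begin{equation*}
\int_\Omega \frac{f_n\,T_k^{\gamma}(u_n)}{(u_n+\tfrac{1}{n})^{\gamma}}\,\d x \;\leq\; \|f\|_{L^1(\Omega)} .
\end{equation*}
For the measure term, $T_k^{\gamma}(u_n)\leq k^{\gamma}$ gives $\int_\Omega T_k^{\gamma}(u_n)\,\d\mu_n \leq k^{\gamma}\|\mu_n\|_{L^1(\Omega)}\leq C k^{\gamma}$ by the uniform bound on $(\mu_n)$.

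\textbf{Conclusion.} Combining the three pieces and using the equivalent norm on $X_0^{s,2}(\Omega)$ from Remark \ref{rm 2.1} gives
\begin{equation*}
\frac{4\gamma}{(\gamma+1)^2}\,\bigl\|T_k^{(\gamma+1)/2}(u_n)\bigr\|_{X_0^{s,2}(\Omega)}^{2} \;\leq\; \|f\|_{L^1(\Omega)} + C\,k^{\gamma},
\end{equation*}
which yields the desired uniform bound in $n$ for each fixed $k>0$. The main subtlety I anticipate is justifying the algebraic inequality across the truncation threshold (the mixed case $a>k>b$) and verifying that $T_k^{\gamma}(u_n)$ has the regularity needed to serve as a test function; once these technical points are in hand the estimate is straightforward.
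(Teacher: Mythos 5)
Your proof is correct and reaches the same estimate as the paper, but it handles the nonlocal bilinear form by a genuinely different route. The paper first observes that $\int_\Omega (-\Delta)^{s/2} u_n \cdot (-\Delta)^{s/2} T_k^\gamma(u_n) \geq 0$ (since $t\mapsto T_k^\gamma(t)$ is nondecreasing), drops that term entirely, obtains only the gradient bound \eqref{eq3.18H}, and then recovers the fractional estimate \eqref{eq3.19I} \emph{a posteriori} by applying the embedding \eqref{sg-emb2} to $T_k^{(\gamma+1)/2}(u_n) \in W_0^{1,2}(\Omega)$. You instead keep the nonlocal term and bound it from below directly by the Gagliardo seminorm of $T_k^{(\gamma+1)/2}(u_n)$, via the pointwise algebraic inequality $(a-b)\bigl(T_k^\gamma(a)-T_k^\gamma(b)\bigr) \geq \frac{4\gamma}{(\gamma+1)^2}\bigl(T_k^{(\gamma+1)/2}(a)-T_k^{(\gamma+1)/2}(b)\bigr)^2$, and your case analysis across the truncation threshold (both in $[0,k]$, both $\geq k$, mixed) is correct. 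The paper's route is shorter and showcases the special structure of the mixed space $X_0^{s,2}(\Omega)$, where the local gradient dominates the fractional seminorm; it would not transfer to a purely nonlocal problem. Your route is more self-contained, closer in spirit to the purely nonlocal treatment in \cite{GCG2019}, and would survive if the $-\Delta$ term were removed, at the cost of verifying the pointwise inequality for the truncated power. Both yield the same conclusion, and your justification that $T_k^\gamma(u_n)$ is an admissible test function (Lipschitz composition of a function in $X_0^{s,2}(\Omega)\cap L^\infty(\Omega)$) is a welcome explicit step that the paper leaves implicit.
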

     \begin{proof} Note that $\int_\Omega (-\Delta)^\frac{s}{2} u_n\cdot(-\Delta)^\frac{s}{2} T^\gamma_k(u_n)\geq 0,~\forall\, k>0$. Thus, taking $\phi=T^\gamma_k(u_n)$ as the test function in \eqref{eq2.11:SWF}, we get
         \begin{align}\label{eq3.15E}
              \int_\Omega \nabla u_n\cdot\nabla T^\gamma_k(u_n) &\leq\int_\Omega \nabla u_n\cdot\nabla T^\gamma_k(u_n)+\int_\Omega (-\Delta)^\frac{s}{2} u_n\cdot(-\Delta)^\frac{s}{2} T^\gamma_k(u_n)\nonumber\\
              &=\int_\Omega \frac{f_n T^\gamma_k(u_n)}{(u_n +\frac{1}{n})^\gamma} + \int_\Omega \mu_n T^\gamma_k(u_n) 
         \end{align}
 Also,
\begin{align}\label{eq3.16F}
    \int_\Omega \nabla u_n \cdot \nabla T^\gamma_k(u_n)&=\gamma \int_\Omega \nabla u_n\cdot\nabla T_k(u_n)T^{\gamma-1}_k(u_n)\nonumber\\
    &=\gamma \int_\Omega T^{\frac{\gamma-1}{2}}_k(u_n)\nabla u_n \cdot T^{\frac{\gamma-1}{2}}_k(u_n)\nabla T_k(u_n)\nonumber\\
    &=\gamma \int_\Omega T^{\frac{\gamma-1}{2}}_k(u_n)\nabla T_k(u_n) \cdot T^{\frac{\gamma-1}{2}}_k(u_n)\nabla T_k(u_n)\nonumber\\ 
    &=\gamma \bigg(\frac{2}{\gamma+1}\bigg)^2 \int_\Omega \nabla T^{\frac{\gamma+1}{2}}_k(u_n) \cdot \nabla T^{\frac{\gamma+1}{2}}_k(u_n)\nonumber\\
    &=\frac{4\gamma}{(\gamma+1)^2} \int_\Omega |\nabla T^{\frac{\gamma+1}{2}}_k(u_n)|^2.
\end{align}
Now we estimate the right-hand side of \eqref{eq3.15E} to obtain
\begin{align}\nonumber
    \int_\Omega \frac{f_n T^\gamma_k(u_n)}{(u_n +\frac{1}{n})^\gamma} + \int_\Omega \mu_n T^\gamma_k(u_n) & \leq \int_\Omega \frac{f_n.u_n^\gamma}{(u_n +\frac{1}{n})^\gamma} + \int_\Omega \mu_n k^\gamma\\ \nonumber
& \leq \int_\Omega f_n + k^\gamma \int_\Omega \mu_n\\ \nonumber
&\leq \int_\Omega |f|+ k^\gamma \int_\Omega \mu_n\\ \nonumber
& =\|f\|_{L^1(\Omega)}+k^\gamma \|\mu_n\|_{L^1(\Omega)}\\ 
& \leq C(k,\gamma)k^\gamma.\label{eq3.17G}
\end{align}
Therefore,
\begin{align}\label{eq3.18H}
    \int_\Omega |\nabla T^{\frac{\gamma+1}{2}}_k(u_n)|^2 \leq C(k,\gamma)k^\gamma.
\end{align}
    Similarly, using \eqref{sg-emb2}, we obtain 
    \begin{align}\label{eq3.19I}
   \int_\Omega |(-\Delta)^\frac{s}{2}T^{\frac{\gamma+1}{2}}_k(u_n)|^2\leq C(k,\gamma)k^\gamma.
    \end{align}
    Therefore,
    \begin{equation}\label{eq3.20J}
    \int_\Omega |\nabla T^{\frac{\gamma+1}{2}}_k(u_n)|^2 +\int_\Omega |(-\Delta)^\frac{s}{2}T^{\frac{\gamma+1}{2}}_k(u_n)|^2\leq C(k,\gamma)k^\gamma.
    \end{equation}
    Hence, we get $T^{\frac{\gamma+1}{2}}_k(u_n)$ is bounded in $X_0^{s,2}(\Omega)$ for all $k>0$, completing the proof.
    \end{proof}
    The next lemma consisting of local estimates on $u_n$ plays a crucial role in obtaining the existence result as it allows us to pass the limit in \eqref{eq2.11:SWF} as  $n \rightarrow \infty$.
    \begin{lemma}\label{lmn3.3>1}
        Let $u_n$ be a solution to the problem \eqref{eq2.10:SWP} for $\gamma > 1$. Then $u_n$ is bounded in $X_{loc}^{s,q}(\Omega)$ for every $q< \frac{N}{N-1}$.
    \end{lemma}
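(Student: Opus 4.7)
The strategy is to localize the Marcinkiewicz argument from Lemma \ref{lmn3.1}, exploiting the uniform local positivity of $u_n$ provided by Lemma \ref{lmm2.6} to render the singular nonlinearity locally integrable. First, fix $\omega \Subset \Omega$ and choose $\omega'$ with $\omega \Subset \omega' \Subset \Omega$. By Lemma \ref{lmm2.6} there is $C_{\omega'}>0$, independent of $n$, with $u_n \geq C_{\omega'}$ on $\omega'$, so on $\omega'$
$$\frac{f_n}{(u_n+\tfrac{1}{n})^\gamma} \leq \frac{f}{C_{\omega'}^\gamma} \in L^1(\omega'),$$
and $F_n := \frac{f_n}{(u_n+1/n)^\gamma}+\mu_n$ is uniformly bounded in $L^1(\omega')$. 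In addition, combining Lemma \ref{lmn3.2>1} with the Sobolev embedding $X_0^{s,2}(\Omega) \hookrightarrow L^{2^*}(\Omega)$ applied to $T_k^{(\gamma+1)/2}(u_n)$ yields, by a standard level-set argument, $|\{u_n \geq k\}| \leq C k^{-N/(N-2)}$, so $u_n$ is uniformly bounded in $M^{N/(N-2)}(\Omega)$ and in particular in $L^1(\Omega)$.

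Next, I would pick $\psi \in C_c^\infty(\omega')$ with $0\leq\psi\leq 1$ and $\psi\equiv 1$ on $\omega$, and test \eqref{eq2.11:SWF} with $\phi = \psi^2 T_k(u_n) \in X_0^{s,2}(\Omega)$ (admissible by density since $u_n \in X_0^{s,2}(\Omega)\cap L^\infty(\Omega)$ by Lemma \ref{lmm2.5}). The local gradient contribution splits as $\int \psi^2 |\nabla T_k(u_n)|^2 + 2\int T_k(u_n)\psi \nabla u_n\cdot\nabla\psi$; with $F(t):=\int_0^t T_k(r)\,dr$ (so $T_k(u_n)\nabla u_n = \nabla F(u_n)$, $0\leq F(t)\leq kt$), an integration by parts reduces the cross term to $-\int F(u_n)\Delta(\psi^2)$, bounded by $C k\|u_n\|_{L^1(\omega')}\leq Ck$ thanks to the $L^1$ bound above. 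For the nonlocal piece $\int (-\Delta)^{s/2}u_n\cdot(-\Delta)^{s/2}(\psi^2 T_k(u_n))$ I would pass to the double-integral representation and decompose
$$\psi^2(x)T_k(u_n)(x)-\psi^2(y)T_k(u_n)(y) = \psi^2(x)\bigl(T_k(u_n)(x)-T_k(u_n)(y)\bigr)+T_k(u_n)(y)\bigl(\psi^2(x)-\psi^2(y)\bigr);$$
after symmetrization in $x,y$ and using the monotonicity inequality $(u_n(x)-u_n(y))(T_k(u_n)(x)-T_k(u_n)(y))\geq (T_k(u_n)(x)-T_k(u_n)(y))^2$, the first piece controls $\int_\omega|(-\Delta)^{s/2}T_k(u_n)|^2$ up to lower-order terms, while the second is absorbed by Cauchy--Schwarz using $|\psi^2(x)-\psi^2(y)|\lesssim \min\{|x-y|,1\}$, the bound $|T_k(u_n)|\leq k$, and the uniform $L^1$ bound on $u_n$. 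Combined with $\int F_n\psi^2 T_k(u_n) \leq k\|F_n\|_{L^1(\omega')}\leq Ck$, this delivers
$$\int_\omega |\nabla T_k(u_n)|^2 + \int_\omega |(-\Delta)^{s/2}T_k(u_n)|^2 \leq Ck, \qquad \forall\,k\geq 1,$$
with $C$ depending on $\omega,\omega'$ but not on $n$.

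From this localized bound I would replicate verbatim the Marcinkiewicz machinery of Lemma \ref{lmn3.1}: the mixed Sobolev inequality \eqref{eq2.8S} (applied to $\psi T_k(u_n)$) gives $\|T_k(u_n)\|_{L^{2^*}(\omega)}^2 \leq Ck$, hence $m(\{u_n\geq k\}\cap \omega) \leq C k^{-N/(N-2)}$; the splitting $\{|\nabla u_n|\geq t\}\cap\omega \subset \{|\nabla u_n|\geq t,u_n<k\}\cap\omega \cup \{u_n\geq k\}\cap\omega$ together with the optimal choice $k=t^{(N-2)/(N-1)}$ (and the analogue for $(-\Delta)^{s/2}u_n$) then yields
$$m\bigl(\{|\nabla u_n|\geq t\}\cap \omega\bigr) + m\bigl(\{|(-\Delta)^{s/2}u_n|\geq t\}\cap\omega\bigr) \leq \frac{C}{t^{N/(N-1)}},$$
i.e.\ $\nabla u_n$ and $(-\Delta)^{s/2}u_n$ belong uniformly to $M^{N/(N-1)}(\omega)$. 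Invoking the embedding \eqref{embed2.1} finally gives $u_n$ bounded in $X^{s,q}(\omega)$ for every $q<N/(N-1)$, establishing the claim.

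The main obstacle is the nonlocal cross term stemming from the cutoff: unlike the Laplacian, $(-\Delta)^s$ does not interact smoothly with multiplication by $\psi^2$, so the double integral must be split and re-symmetrized, with the ``oscillation piece'' $T_k(u_n)(y)(\psi^2(x)-\psi^2(y))$ carefully controlled by the Lipschitz regularity of $\psi^2$ together with the uniform $L^1$ (and $M^{N/(N-2)}$) bound on $u_n$ derived from Lemma \ref{lmn3.2>1}; the remaining local-gradient and right-hand-side computations are routine variants of the $\gamma\leq 1$ case.
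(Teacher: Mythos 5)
Your strategy is genuinely different from the paper's. The paper never introduces a cutoff: it splits $u_n = T_1(u_n) + G_1(u_n)$, handles $G_1(u_n)$ globally by testing with $T_k(G_1(u_n))$ (since on $\{u_n>1\}$ the singular denominator is bounded below by $1$), and handles $T_1(u_n)$ by testing with $T_1^\gamma(u_n)$, where the factor $T_1^{\gamma-1}(u_n)\geq C_K^{\gamma-1}$ on $K\Subset\Omega$ from Lemma~\ref{lmm2.6} turns a global inequality into a local $H^1$ estimate. In this way the nonlocal bilinear form is only ever paired against truncations of $u_n$, for which the monotonicity inequality keeps the sign favourable, and no nonlocal cross term ever appears. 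Your route of testing with $\psi^2 T_k(u_n)$ is a priori plausible, but it forces you to control the cutoff--nonlocal interaction, and this is precisely where the paper's decomposition buys a decisive simplification.

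There is a concrete gap in your treatment of that cross term. After symmetrizing you must bound
\[
\frac12\int_{\mathbb R^N}\int_{\mathbb R^N}\frac{\bigl(u_n(x)-u_n(y)\bigr)\bigl(T_k(u_n)(x)+T_k(u_n)(y)\bigr)\bigl(\psi^2(x)-\psi^2(y)\bigr)}{|x-y|^{N+2s}}\,dx\,dy,
\]
and you propose to do so via Cauchy--Schwarz, $|T_k(u_n)|\leq k$, $|\psi^2(x)-\psi^2(y)|\leq C\min(|x-y|,1)$, and the uniform $L^1$ (and $M^{N/(N-2)}$) bound on $u_n$. These ingredients give at best a bound of the type
\[
Ck\int_{\omega'}|u_n(x)|\int_{\mathbb R^N}\frac{\min(|x-y|,1)}{|x-y|^{N+2s}}\,dy\,dx,
\]
and the inner integral diverges as soon as $s\geq \tfrac12$, since near the diagonal the integrand behaves like $|x-y|^{1-N-2s}$. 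The $M^{N/(N-2)}$ information is of no help here: the failure is at the diagonal, not at infinity, and neither integrability in $L^q(\Omega)$ nor the estimate on $T_k^{(\gamma+1)/2}(u_n)$ from Lemma~\ref{lmn3.2>1} gives any uniform fractional-Sobolev control on $u_n$ itself. To rescue the cutoff approach one would have to exploit genuine cancellation --- for instance, split $u_n(x)-u_n(y)$ into its $T_k$ and $G_k$ parts so as to recognise $\frac12\mathcal E(T_k(u_n)^2,\psi^2)$ and $k\,\mathcal E(G_k(u_n),\psi^2)$, integrate by parts against the bounded function $(-\Delta)^s(\psi^2)$, and then absorb the residual term by Young's inequality with the weighted seminorm coming from the symmetrized main term. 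None of this is in the proposal, so as written the argument is incomplete for $s\geq\tfrac12$ and does not deliver the claimed local estimate $\int_\omega|\nabla T_k(u_n)|^2+\int_\omega|(-\Delta)^{s/2}T_k(u_n)|^2\leq Ck$. I would strongly suggest switching to the $T_1/G_1$ decomposition, which avoids the cutoff and hence the problem altogether.
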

    \begin{proof}
    Recall that $u_n=T_1(u_n)+G_1(u_n).$ Thus we divide the proof into two parts showing $G_1(u_n)$ and $T_1(u_n)$ are bounded in $X_0^{s,q}(\Omega)$ for every $q< \frac{N}{N-1}$.\\
        \textbf{Step I:} We claim $G_1(u_n)$ is bounded in $X_0^{s,q}(\Omega)$ for every $q< \frac{N}{N-1}$.\\
        By the Definition $G_k$ and $T_k$, we have
        
\iffalse
        \begin{align}\label{eq3.21}
            G_1(u_n)=(u_n-1)^+=\begin{cases}
             0, \,\,\,\,\,\,\,\,&~\text{if}\,\,\, 0 \leq u_n \leq 1, \\ 
             (u_n -1),&~\text{if}\,\,\, u_n >1.
            \end{cases}
        \end{align} 
\fi
        
        \begin{align}\label{eq3.22}
            \nabla G_1(u_n)&= \begin{cases}
               0 \,\,\,\,\,\,\,\,&~\text{if}\,\,\, 0 \leq u_n \leq 1 \\ 
               \nabla(u_n -1)&~\text{if}\,\,\, u_n >1
            \end{cases} \nonumber\\
            &= \begin{cases}
               0\,\,\,\,\,\,\,\,&~\text{if}\,\,\, 0 \leq u_n \leq 1, \\ 
               \nabla u_n&~\text{if}\,\,\, u_n >1,
            \end{cases}
        \end{align}
        and 
        \begin{align}\label{eq3.26C}
            \nabla T_k(G_1(u_n))=\begin{cases}
                \nabla u_n &~ \text{if} ~ 1<u_n \leq k+1, \\
                0 &~ \text{if} ~ u_n \leq 1 ~\text{or}~ u_n>k+1.
            \end{cases}
        \end{align}
        Similarly,
          \begin{align}\label{eq3.23}
            (-\Delta)^\frac{s}{2}G_1(u_n)&= \begin{cases}
               0, \,\,\,\,\,\,\,\,&~\text{if}\,\,\, 0 \leq u_n \leq 1, \\ 
               (-\Delta)^\frac{s}{2} u_n&~\text{if}\,\,\, u_n >1.
            \end{cases}
        \end{align}
        and
        \begin{align}\label{eq3.27D}
            (-\Delta)^\frac{s}{2} T_k(G_1(u_n))=\begin{cases}
                (-\Delta)^\frac{s}{2} u_n& ~ \text{if} ~ 1<u_n \leq k+1, \\
                0 &~ \text{if} ~ u_n, \leq 1 ~\text{or}~ u_n>k+1.
            \end{cases}
        \end{align}
        Choose $\phi=T_k(G_1(u_n))$, $k>1$ as a test function in \eqref{eq2.11:SWF}. Then we get
        \begin{align*}
    &\int_\Omega |\nabla T_k(G_1(u_n))|^2+\int_\Omega |(-\Delta)^\frac{s}{2} T_k(G_1(u_n))|^2 \nonumber \\ 
    &=\int_{ 1<u_n \leq k+1} |\nabla (u_n)|^2+\int_{1<u_n \leq k+1} |(-\Delta)^\frac{s}{2} (u_n)|^2 \nonumber \\ 
    & =\int_\Omega \nabla u_n \cdot \nabla T_k(G_1(u_n))+\int_\Omega (-\Delta)^\frac{s}{2} u_n \cdot (-\Delta)^\frac{s}{2} T_k(G_1(u_n)) \nonumber \\ 
    & = \int_\Omega \frac{f_n T_k(G_1(u_n))}{(u_n +\frac{1}{n})^\gamma} + \int_\Omega \mu_n T_k(G_1(u_n))\nonumber\\
    &= \int_{\left(u_n +\frac{1}{n}\right)\leq k}\frac{f_n T_k(G_1(u_n))}{(u_n +\frac{1}{n})^\gamma} +\int_{(u_n +\frac{1}{n}) \geq k} \frac{f_n T_k(G_1(u_n))}{(u_n +\frac{1}{n})^\gamma} +\int_\Omega \mu_n T_k(G_1(u_n)) \nonumber \\ 
    & \leq \int_{\left(u_n +\frac{1}{n}\right)\leq k}\frac{f_n T_k(G_1(u_n))}{(u_n +\frac{1}{n})^\gamma}+\int_{(u_n +\frac{1}{n})\geq k}\frac{f k}{k^\gamma} +k\|\mu_n\|_{L^1(\Omega)} \nonumber \\ 
   &=\int_{1<u_n<\left(u_n +\frac{1}{n}\right)\leq k}\frac{f_n T_k(G_1(u_n))}{(u_n + \frac{1}{n})^\gamma}+k^{1-\gamma} \int_{(u_n +\frac{1}{n})\geq k} f +k\|\mu_n\|_{L^1(\Omega)} \nonumber \\ 
   & \leq \int_{\left(u_n +\frac{1}{n}\right)\leq k} f k+k^{1-\gamma} \int_{(u_n +\frac{1}{n})\geq k} f +k\|\mu_n\|_{L^1(\Omega)} \nonumber \\ 
   & \leq Ck, \,\,\,\forall \,k > 1.
\end{align*}
   Thus, we obtain 
   \begin{align}\label{eq3.31H}
        &\int_\Omega |\nabla T_k(G_1(u_n))|^2+\int_\Omega |(-\Delta)^\frac{s}{2} T_k(G_1(u_n))|^2 
       \leq Ck, \,\,\,\forall \,k \geq 1.
   \end{align}
    Moreover, from \eqref{eq3.20J} and the mixed Sobolev inequality \eqref{eq2.8S} with $p=2$, we obtain
   \begin{align}\label{eq3.33}
       \frac{1}{C^2}\bigg(\int_\Omega |T_k^\frac{\gamma+1}{2}(u_n)|^{2^*}\bigg)^\frac{2}{{2^*}} & \leq\int_\Omega |\nabla T^\frac{\gamma+1}{2}_k(u_n)|^2+\int_\Omega |(-\Delta)^\frac{s}{2}T^\frac{\gamma+1}{2}_k(u_n)|^2\nonumber\\
       &\leq C(k,\gamma)k^\gamma, ~\forall\, k \geq 1.
      \end{align}
         We now proceed as in Lemma \ref{lmn3.1} to show that $\nabla G_1(u_n)$ and $(-\Delta)^\frac{s}{2}G_1(u_n)$ are bounded in the Mareinkiewicz space $M^\frac{N}{N-1}(\Omega)$. Clearly, the following inclusions hold.
         \begin{align}\nonumber
           {\{|\nabla u_n| \geq t, u_n>1}\} & ={\{|\nabla u_n| \geq t,1<u_n \leq k+1}\} \cup {\{|\nabla u_n| \geq t,u_n > k+1}\}\\ \nonumber
& \subset {\{|\nabla u_n| \geq t,1<u_n \leq k+1}\} \cup {\{u_n > k+1}\} \subset \Omega.
        \end{align}
        and
        \begin{align}\nonumber
            \left\{|(-\Delta)^\frac{s}{2} u_n| \geq t, u_n>1\right\}
& \subset {\{|(-\Delta)^\frac{s}{2} u_n| \geq t,1<u_n \leq k+1}\} \cup {\{u_n > k+1}\} \subset \Omega.
        \end{align}
        Thus, by using the properties of Lebesgue measure $m$, we get
        \begin{align}\label{eq3.24A}
            m({\{|\nabla u_n| \geq t, u_n>1}\})\leq m({\{|\nabla u_n| \geq t,1<u_n \leq k+1}\}) + m({\{u_n > k+1}\})
        \end{align}
        and
        \begin{align}\label{eq3.25B}
            m({\{|(-\Delta)^\frac{s}{2} u_n| \geq t, u_n>1}\})\leq & m({\{|(-\Delta)^\frac{s}{2} u_n| \geq t,1<u_n \leq k+1}\})+ m({\{u_n > k+1}\}). 
        \end{align}
 Let us denote
  \begin{align*}
      J_1&={\{|\nabla u_n| \geq t,1<u_n \leq k+1}\},\\
      J_2&={\{|(-\Delta)^\frac{s}{2} u_n| \geq t,1<u_n \leq k+1}\}~\text{and}\\
      J_3&=\{x\in \Omega: u_n>k+1\}.
  \end{align*} 
  Now on using \eqref{eq3.31H} and the definition of $J_1$, we get
   \begin{align}\label{eq3.34I}
        m({\{|\nabla u_n| \geq t,1<u_n \leq k+1}\}) & \leq \frac{1}{t^2} \int_{J_1} |\nabla u_n|^2 \nonumber \\ 
        &  \leq \frac{1}{t^2}\left[ \int_{1<u_n \leq k+1}|\nabla u_n|^2 + \int_{1<u_n \leq k+1}|(-\Delta)^\frac{s}{2}u_n|^2\right] \nonumber \\ 
      &\leq \frac{Ck}{t^2}, \,\,\forall\, k ~\geq 1. 
   \end{align}
  Similarly, on $J_2$, we have
   \begin{align}\label{eq3.35J}
        m({\{|(-\Delta)^\frac{s}{2} u_n| \geq t,1<u_n \leq k+1}\}) & \leq \frac{1}{t^2} \int_{J_2} |(-\Delta)^\frac{s}{2} u_n|^2 \nonumber \\ 
        &  \leq \frac{1}{t^2}\left[ \int_{1<u_n \leq k+1}|\nabla u_n|^2 + \int_{1<u_n \leq k+1}|(-\Delta)^\frac{s}{2}u_n|\right] \nonumber \\ 
      &=\frac{Ck}{t^2}, \,\,\forall\, k \geq 1.
   \end{align}
  Finally, using the fact $T_k(u_n)=k$ on $J_3$ and using \eqref{eq3.33}, we obtain
       \begin{align}\nonumber
            C^2Ck^\gamma \geq \bigg(\int_{J_3} |T_k^\frac{\gamma+1}{2}(u_n)|^{2^*}\bigg)^\frac{2}{{2^*}}=\left( \int_{J_3} k^\frac{(\gamma+1){2^*}}{2} \right)^\frac{2}{2^*} = k^{\gamma+1}m(\{u_n>k+1\})^\frac{2}{2^*},~\forall~ k \geq 1,
            \end{align}
            which implies
            \begin{align}\label{eq3.36k}
                m(\{u_n>k+1\})\leq \frac{C}{k^\frac{2^*}{2}} =\frac{C}{k^\frac{N}{N-2}},\,\,\forall \, k \geq 1.
            \end{align}
        Thus we get $(u_n-1)$ is  bounded in the Marcinkiewicz Space $M^\frac{N}{N-2}(\Omega)$ which implies $G_1(u_n)$ is bounded in the Marcinkiewicz Space $M^\frac{N}{N-2}(\Omega)$. \\
 Now use the estimates \eqref{eq3.34I}, \eqref{eq3.36k} in \eqref{eq3.24A} and then put $k=t^\frac{N-2}{N-1}$ to obtain
       \begin{align}\label{eq3.37L}
            m({\{|\nabla u_n| > t, u_n>1}\}) &\leq m({\{|\nabla u_n| \geq t,1<u_n \leq k+1}\}) + m({\{u_n > k+1}\}) \nonumber \\
             & \leq \frac{Ck}{t^2} + \frac{C}{k^\frac{N}{N-2}}\nonumber\\
             &\leq \frac{C}{t^\frac{N}{N-1}}\,\,\forall \,t \geq 1.
       \end{align}
       Again, using the estimates \eqref{eq3.35J}, \eqref{eq3.36k} in \eqref{eq3.25B} and then putting $k=t^\frac{N-2}{N-1}$, we deduce
       \begin{align}\label{eq3.38M}
           m({\{|(-\Delta)^\frac{s}{2} u_n| > t, u_n>1}\}) & \leq m({\{|(-\Delta)^\frac{s}{2} u_n| \geq t,1<u_n \leq k+1}\}) + m({\{u_n > k+1}\}) \nonumber \\
             & \leq \frac{Ck}{t^2} + \frac{C}{k^\frac{N}{N-2}}\nonumber\\
             & \leq \frac{C}{t^\frac{N}{N-1}}\,\,\forall\, t \geq 1.
       \end{align}
       Therefore, we get $\nabla G_1(u_n)$ and $(-\Delta)^\frac{s}{2}G_1(u_n)$ are bounded in $M^\frac{N}{N-1}(\Omega)$. Thanks to the embedding \ref{embed2.1}, we conclude that $G_1(u_n)$ is  bounded in $X_0^{s,q}(\Omega)$ for every $q<\frac{N}{N-1}$.\\
       \textbf{Step II:} $T_1(u_n)$ is bounded in $X_{loc}^{s,q}(\Omega)$ for every $q<\frac{N}{N-1}.$ \\
       In this case, we focus on $u_n\leq 1$ and under this assumption, we prove that
      \begin{equation}\label{T_1 bdd}
          \int_K |\nabla T_1(u_n)|^2+\int_K |(-\Delta)^\frac{s}{2} T_1(u_n)|^2 \leq C, ~\forall\, K \Subset \Omega,
        \end{equation}
       where $C>0$ is constant. Putting, $\phi=T^\gamma_1(u_n)$ as a test function in \eqref{eq2.11:SWF}, we get
       \begin{align}
           \int_\Omega \nabla u_n \cdot \nabla T^\gamma_1(u_n) & \leq \int_\Omega \nabla u_n \cdot \nabla T^\gamma_1(u_n)+\int_\Omega (-\Delta)^\frac{s}{2} u_n \cdot (-\Delta)^\frac{s}{2} T^\gamma_1(u_n)\nonumber \\
          & =\int_\Omega \frac{f_n T^\gamma_1(u_n)}{(u_n +\frac{1}{n})^\gamma} + \int_\Omega \mu_n T^\gamma_1(u_n)\nonumber\\
          & \leq \int_\Omega \frac{f_n u_n^\gamma}{(u_n +\frac{1}{n})^\gamma} + \int_\Omega \mu_n \nonumber\\ 
& \leq \int_\Omega f_n + \int_\Omega \mu_n \nonumber\\ 
&\leq \int_\Omega |f|+ \int_\Omega \mu_n \nonumber\\ 
& =\|f\|_{L^1(\Omega)}+\|\mu_n\|_{L^1(\Omega)}\nonumber\\
&\leq C. \label{eq3.44C}
             % \Rightarrow  \int_\Omega \nabla u_n \cdot \nabla T^\gamma_1(u_n) \leq \int_\Omega \frac{f_n T^\gamma_1(u_n)}{(u_n +\frac{1}{n})^\gamma} + \int_\Omega \mu_n T^\gamma_1(u_n).
      \end{align}
    Now $\forall\, K \Subset \Omega$, we have $u_n \geq C_K>0$. Therefore, the left-hand side of \eqref{eq3.44C} can be estimated as
     \begin{align}\label{eq3.43B}
         \int_\Omega \nabla u_n \cdot \nabla T^\gamma_1(u_n) & =\gamma\int_\Omega \nabla u_n \cdot \nabla T_1(u_n) T_1^{\gamma-1}(u_n)\nonumber\\
        & =\gamma\int_\Omega |\nabla T_1(u_n)|^2 T_1^{\gamma-1}(u_n) \nonumber\\ 
         & \geq \gamma\int_K |\nabla T_1(u_n)|^2 T_1^{\gamma-1}(u_n) \nonumber\\
         & \geq \gamma  C_K^{\gamma- 1} \int_K |\nabla T_1(u_n)|^2.
      \end{align}
      Thus, using the inequalities \eqref{eq3.44C} and \eqref{eq3.43B}, we get
      \begin{align}
          \int_K |\nabla T_1(u_n)|^2 \leq C.
      \end{align}
     Therefore, using \eqref{sg-emb}, we conclude that 
      \begin{align}
       \int_K |\nabla T_1(u_n)|^2+\int_K |(-\Delta)^\frac{s}{2} T_1(u_n)|^2 \leq C,   
      \end{align}
      which implies $u_n\in X_{loc}^{s,q}(\Omega)$ is bounded for every $q< \frac{N}{N-1}$. This completes the proof. 
    \end{proof}
\begin{theorem}\label{thm3.2>1}
     Let $\gamma > 1$. Then there exists a weak solution $u\in X_{loc}^{s,q}(\Omega)$ to the problem \eqref{eq2:problem} for every $ q < \frac{N}{N-1}.$ 
\end{theorem}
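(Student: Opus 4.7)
The plan is to mimic the structure of Theorem \ref{thm3.1<1} while exploiting the two a~priori estimates obtained in Lemmas \ref{lmn3.2>1} and \ref{lmn3.3>1}. From Lemma \ref{lmn3.3>1}, the sequence $(u_n)$ is bounded in $X^{s,q}_{loc}(\Omega)$ for every $q<\tfrac{N}{N-1}$, so a diagonal extraction yields a function $u\in X^{s,q}_{loc}(\Omega)$ and a (relabelled) subsequence with $u_n\rightharpoonup u$ in $X^{s,q}_{loc}(\Omega)$. The compact embedding of $X^{s,q}_{loc}(\Omega)$ into $L^1_{loc}(\Omega)$ together with the usual Rellich--Kondrachov argument then gives, up to a further subsequence, $u_n\to u$ strongly in $L^1_{loc}(\Omega)$ and {\it{a.e.}} in $\Omega$. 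The pointwise monotonicity argument of Lemma \ref{lmm2.6} transfers to the limit so that for every $K\Subset\Omega$ there exists $C_K>0$ (independent of $n$) with $u\geq u_n\wedge w_1\geq C_K>0$ in $K$, and in particular $u>0$ a.e.\ in $\Omega$. Combining this with Lemma \ref{lmn3.2>1}: the bound $\|T^{(\gamma+1)/2}_k(u_n)\|_{X^{s,2}_0(\Omega)}\leq C(k,\gamma)k^{\gamma/2}$ plus reflexivity of $X^{s,2}_0(\Omega)$ and $a.e.$\ convergence of the truncations forces $T^{(\gamma+1)/2}_k(u_n)\rightharpoonup T^{(\gamma+1)/2}_k(u)$ in $X^{s,2}_0(\Omega)$ for every $k>0$, so the required boundary condition $T^{(\gamma+1)/2}_k(u)\in X^{s,2}_0(\Omega)$ built into Definition \ref{d weak} is recovered.

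Next I would pass to the limit in the approximate weak formulation \eqref{eq2.11:SWF} term by term, using a fixed test function $\phi\in C_c^\infty(\Omega)$ whose support is a compact set $K\Subset\Omega$. The two operator terms $\int_\Omega \nabla u_n\cdot\nabla\phi$ and $\int_\Omega (-\Delta)^{s/2}u_n\cdot(-\Delta)^{s/2}\phi$ pass to the limit by the weak convergence in $X^{s,q}_{loc}(\Omega)$, since $\nabla\phi$ and $(-\Delta)^{s/2}\phi$ lie in $L^{q'}(\Omega)$ for every $q'>N$ and are supported (essentially) near $K$. For the singular term I will exploit the uniform lower bound $u_n\geq C_K>0$ on $K$, which yields the $L^1$ domination
\begin{equation*}
\left|\frac{f_n\,\phi}{(u_n+\tfrac1n)^\gamma}\right|\leq \frac{\|\phi\|_{L^\infty(\Omega)}\,f}{C_K^{\,\gamma}}\in L^1(\Omega),
\end{equation*}
so Lebesgue's dominated convergence theorem, combined with the a.e.\ convergence $u_n\to u$ and the pointwise convergence $f_n\to f$, gives the passage to the limit to $\int_\Omega f\phi\,u^{-\gamma}$. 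The measure term passes to the limit by the very definition of weak-$\ast$ convergence $\mu_n\rightharpoonup\mu$ (Definition \ref{def2.1 M}), since $\phi\in C_c^\infty(\Omega)$.

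Finally, I would assemble the three ingredients above into the full weak formulation \eqref{eq2.9:WF}, noting that the local positivity $u\geq C_K>0$ on each $K\Subset\Omega$ guarantees \eqref{eq2.8:rel K}, the $X^{s,q}_{loc}(\Omega)$ membership gives the regularity prescribed in Definition \ref{d weak}, and the $X^{s,2}_0$ bound on $T^{(\gamma+1)/2}_k(u)$ encodes the Dirichlet boundary datum. The main technical obstacle is the passage to the limit in the singular right-hand side when $\gamma>1$: the only available global information on $u_n$ is the truncated bound from Lemma \ref{lmn3.2>1}, which is not sufficient by itself; the argument genuinely relies on the \emph{local} uniform positivity $u_n\geq C_K$ supplied by Lemma \ref{lmm2.6}. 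A secondary subtlety, which I would handle by the diagonal procedure mentioned above, is to guarantee that the same subsequence works for every $q<\tfrac{N}{N-1}$ and for every $k>0$ simultaneously, so that both the interior estimates of Lemma \ref{lmn3.3>1} and the boundary information of Lemma \ref{lmn3.2>1} pass jointly to the limit.
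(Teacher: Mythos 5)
Your proposal is correct and follows essentially the same route the paper takes: the paper's proof of Theorem~\ref{thm3.2>1} is a one-sentence reference to repeating the argument of Theorem~\ref{thm3.1<1} with Lemmas~\ref{lmn3.2>1} and~\ref{lmn3.3>1}, and your write-up is exactly the filled-in version of that, with the correct ingredients (diagonal extraction in $q$ and $k$, local strong/a.e.\ convergence, the uniform lower bound $u_n\geq C_K$ from Lemma~\ref{lmm2.6} for the dominated-convergence step in the singular term, weak limit identification of $T_k^{(\gamma+1)/2}(u_n)$ to recover the boundary datum, and weak-$*$ convergence for the measure). One small slip: the line ``$u\geq u_n\wedge w_1$'' is not literally true, since $u$ is only an a.e.\ limit of $u_n$; what you want and in fact use is simply that $u_n\geq C_K$ for all $n$ passes to the a.e.\ limit to give $u\geq C_K>0$ in $K$.
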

\begin{proof}
 Proceeding with similar arguments as in the Theorem \ref{thm3.1<1} combining with the results obtained in Lemma \ref{lmn3.2>1} and Lemma \ref{lmn3.3>1}, we conclude the existence of a weak solution to the problem \eqref{eq2:problem} in $ X_{loc}^{s,q}(\Omega)$ for every $ q < \frac{N}{N-1}.$   
\end{proof}
\noindent \textit{Proof of Theorem \ref{t1.1}}: The result follows from Theorem \ref{thm3.1<1} and Theorem \ref{thm3.2>1}.\\

We now turn our attention to the existence of a \emph{veryweak} solution to the problem \eqref{eq2:problem}. Prior to the existence of a \emph{veryweak} solution to \eqref{eq2:problem}, we consider the following measure data problem for the existence of a $\it{duality~solution}$.
\section{Duality solution to the mixed local-nonlocal elliptic PDEs with measure data}\label{sec4}
In this section, we prove the existence of a ``$\it{duality~solution}$" to the following boundary value problem,
 \begin{equation}\label{eq4.1 DP}
\begin{aligned}
   -\Delta u+(-\Delta)^s & u =\mu \,\, \text{in} \,\,\Omega,\\
  & u=0\,\, \text{in}\,\,\mathbb  R^N \setminus \Omega,
   \end{aligned}
\end{equation}
where $\Omega$ is an open bounded subset of $\mathbb R^N$ with Lipschitz boundary $\partial \Omega$, $0<s<1$, $N>2s$ and $\mu$ is any bounded Radon measure.

\par We first prove the existence and uniqueness of a $\it{duality~solution}$ to the problem \eqref{eq4.1 DP}.
\begin{definition}\label{dual def}
    We say $u \in L^1(\mathbb R^N)$ is a $duality~ solution$ to the problem \eqref{eq4.1 DP} if $u=0$ in $\mathbb R^N \setminus \Omega$ and
    \begin{equation}\label{eq4.2 DF}
        \int_\Omega ug \d x= \int_\Omega w \d \mu,~ \forall~ g \in C_c^\infty(\Omega),
    \end{equation}
     where $w$ is the weak solution to the following problem
    \begin{equation}\label{eq4.3 DWF}
    \begin{aligned}
   -\Delta w+(-\Delta)^s & w =g \,\, \text{in} \,\,\Omega,\\
  & w=0\,\, \text{in}\,\,\mathbb  R^N \setminus \Omega.
   \end{aligned}
\end{equation}
\end{definition}
\noindent  Note that if $w\in C(\bar{\Omega})$ (or, atleast $w\in L^\infty(\Omega)$ in case of $ \mu \in L^1(\Omega)$), the equation \eqref{eq4.2 DF} is well-defined. For the existence of a continuous solution of \eqref{eq4.3 DWF}, we refer to Remark \ref{RM4.2}. In particular, from \cite[Theorem 1.3, Theorem 1.4]{SVWZ2023}, we have$w\in C_{loc}^{2,\alpha}(\Omega)\cap C^{1,\alpha}(\bar{\Omega})$.\\

The next lemma provides a relation between the weak and $\it{duality}$ solutions.
\begin{lemma}\label{lmm 4.1}
    Every {\it{duality}} solution to the problem \eqref{eq4.1 DP} is a weak solution to the problem \eqref{eq4.1 DP} and vice-versa. 
\end{lemma}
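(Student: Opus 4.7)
The plan is to prove both implications through a single approximation scheme that makes the two formulations rigorously equivalent at the smooth level and then survives a limit. I would first regularize the datum: pick a sequence $(\mu_n) \subset C_c^\infty(\Omega)$ of nonnegative smooth approximants with $\mu_n \rightharpoonup \mu$ in the measure sense and $\|\mu_n\|_{L^1(\Omega)} \leq \|\mu\|_{\mathcal{M}(\Omega)}$, and apply Lax--Milgram in $X_0^{s,2}(\Omega)$ to obtain the unique weak solution $u_n$ of $\mathcal{L}u_n = \mu_n$. By the elliptic regularity recalled in Remark \ref{RM4.2}, each $u_n$ lies in $L^\infty(\Omega)\cap C^{1,\alpha}(\bar{\Omega})$, so every integration by parts below is legitimate.

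At this regularized level, the duality identity follows from the self-adjointness of $\mathcal{L}$. Given $g\in C_c^\infty(\Omega)$, the dual problem $\mathcal{L}w = g$ admits a unique smooth solution $w$, independent of $n$, which is $C^{1,\alpha}(\bar{\Omega})$. Testing the weak formulation for $u_n$ with $w$ and the weak formulation for $w$ with $u_n$, the symmetry of the bilinear form yields
\[
\int_\Omega u_n\, g\,\mathrm{d}x \;=\; \int_\Omega \nabla u_n\cdot\nabla w\,\mathrm{d}x + \int_\Omega (-\Delta)^{s/2}u_n\,(-\Delta)^{s/2}w\,\mathrm{d}x \;=\; \int_\Omega w\,\mathrm{d}\mu_n,
\]
so $u_n$ is simultaneously the weak and the duality solution for $\mu_n$.

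Next I would pass to the limit. Marcinkiewicz-type estimates in the spirit of Lemma \ref{lmn3.1} (with the measure datum replacing the singular source) give a uniform bound of $u_n$ in $X_0^{s,q}(\Omega)$ for every $q<N/(N-1)$, so up to a subsequence $u_n \rightharpoonup u$ in $X_0^{s,q}(\Omega)$ and $u_n\to u$ strongly in $L^1(\Omega)$. The weak formulation transfers to the limit by linearity and the convergence of $\mu_n$; on the duality side, $\int_\Omega u_n g\,\mathrm{d}x \to \int_\Omega u g\,\mathrm{d}x$ by the $L^1$ convergence, while $\int_\Omega w\,\mathrm{d}\mu_n \to \int_\Omega w\,\mathrm{d}\mu$ because $w$ is continuous on $\bar{\Omega}$ and vanishes on $\partial\Omega$. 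Thus $u$ satisfies both formulations, and invoking the uniqueness of the duality solution (Theorem \ref{t1.2}) together with the uniqueness of the weak solution to $\mathcal{L}u=\mu$, any weak or duality solution must agree with $u$, giving both implications.

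The hard part will be the passage to the limit in the duality identity: it relies crucially on the boundary regularity of the dual solution $w$ to pair it with the measure limit $\mu_n \rightharpoonup \mu$, and the regularity statement in Remark \ref{RM4.2} is doing essential work here. One must also be careful with the mixed bilinear form in the self-adjointness step, since the fractional piece is naturally an integral over $\mathbb{R}^N\times\mathbb{R}^N$; the Dirichlet condition in $\mathbb{R}^N\setminus\Omega$ enforced by the space $X_0^{s,2}(\Omega)$ is precisely what makes the symmetry $\int\nabla u_n\cdot\nabla w + (-\Delta)^{s/2}u_n\,(-\Delta)^{s/2}w = \int u_n \mathcal{L}w = \int w\,\mathcal{L} u_n$ rigorous.
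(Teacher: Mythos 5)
Your argument for the converse implication (duality $\Rightarrow$ weak) is sound and is essentially the paper's: the approximation scheme produces a particular $u$ that is simultaneously a weak and a duality solution, and uniqueness of duality solutions then identifies every duality solution with this $u$, which is a weak solution. Your regularization also has the pleasant side effect of making the symmetry step cleanly rigorous — the dual solution $w$ lies in $C^{1,\alpha}(\bar\Omega)$ but is not compactly supported, and the paper's direct argument is a bit quick about the density step needed to extend the admissible test functions from $C_c^\infty(\Omega)$ to $w$.

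The forward implication (weak $\Rightarrow$ duality), however, has a genuine gap. Your scheme only shows that \emph{one specific} weak solution, the limit of your $u_n$, is a duality solution. To conclude that \emph{every} weak solution is a duality solution you invoke ``the uniqueness of the weak solution to $\mathcal{L}u=\mu$,'' but this uniqueness is not established anywhere prior to Lemma \ref{lmm 4.1}, and it is precisely the kind of statement that the weak--duality equivalence is meant to deliver (the paper derives it as a by-product of the converse direction together with Theorem \ref{thm4.3 DS}). Invoking it here is circular. The paper's forward direction avoids the issue entirely and requires no approximation: given an arbitrary weak solution $u$, test its weak formulation against the dual solution $w$ of $\mathcal{L}w=g$ and test the weak formulation of $w$ against $u$; the symmetry of the bilinear form gives
\[
\int_\Omega u g\, \d x \;=\; \int_\Omega \nabla u\cdot\nabla w\, \d x + \int_\Omega (-\Delta)^{s/2}u\,(-\Delta)^{s/2}w\, \d x \;=\; \int_\Omega w\, \d\mu,
\]
so $u$ satisfies the duality identity directly, with no appeal to any a priori uniqueness of weak solutions. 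You should replace your forward direction by this direct pairing; the regularization is then only needed (if at all) to justify the extension of the test function class.
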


\begin{proof}
   Let $u$ be a weak solution of the problem \eqref{eq4.1 DP}. Then $u=0$ in $\mathbb R^N \setminus \Omega$ and
     $$\int_\Omega \nabla u\cdot\nabla \phi \d x+\int_\Omega (-\Delta)^\frac{s}{2} u\cdot(-\Delta)^\frac{s}{2} \phi \d x= \int_\Omega \phi \d \mu,\,\,\,\,\,\, \forall ~\phi \in C_c^\infty(\Omega).$$
     Now for any $g\in C_c^\infty(\Omega)$, let $w$ be the corresponding solution of \eqref{eq4.3 DWF}. We have $u=0$ in $\mathbb R^N\setminus \Omega$ and on using the integration by parts, we get
     \begin{align}
         \int_\Omega ug\d x & =\int_\Omega \nabla u\cdot\nabla w \d x+\int_\Omega (-\Delta)^\frac{s}{2} u \cdot (-\Delta)^\frac{s}{2} w \d x, \nonumber \\ 
         & = \int_\Omega w d\mu. \nonumber
     \end{align}
     Therefore, $u$ is a {\it{duality}} solution of the problem \eqref{eq4.1 DP}.\\
     Conversely, let $u$ be a {\it{duality}} solution of the problem \eqref{eq4.1 DP} in the sense of Definition \ref{dual def} and let $\bar u$ be a weak solution of the problem $\eqref{eq4.1 DP}$. Then using the above argument, we conclude that $\bar u$ is also a {\it{duality}} solution of the problem \eqref{eq4.1 DP}. Thus, we get 
     \begin{equation}\label{eq4.4}
         \int_\Omega ug\d x=\int_\Omega w d\mu =\int_\Omega \bar{u}g\d x, ~ \,\, \forall~ g \in C_c^\infty(\Omega).
     \end{equation}
     On using \eqref{eq4.4}, we obtain
     $$ \int_\Omega (u-\bar{u})g\d x=0,~\,\, \forall~ g \in C_c^\infty(\Omega).$$
     Therefore, implies that $u=\bar{u}$ {\it{a.e.}} in $\Omega$. This completes the proof.
\end{proof}
\noindent Let us recall the following two important theorems due to Biagi {\it{et al.}} \cite{BDVV2022} and Su {\it{et al.}} \cite{SVWZ2023}.
\begin{theorem}{\cite[Theorem 4.7]{BDVV2022}}\label{thm4.1}
    Let $\Omega$ is a bounded open subset of $\mathbb R^N$ with $C^1$ boundary $\partial \Omega$, $f\in L^p(\Omega)$ with $p>\frac{N}{2}$ and $u$ be the weak solution of the following problem.
    \begin{align}
         -\Delta u+(-\Delta)^s & u =f \,\, \text{in} \,\,\Omega, \nonumber \\ 
  & u=0\,\, \text{in}\,\,\mathbb  R^N \setminus \Omega. \nonumber
    \end{align}
    Then $u\in L^\infty(\mathbb R^N)$ and there exists constant $C>0$ such that 
    $$\|u\|_{L^\infty(\mathbb R^N)} \leq C\|f\|_{L^p(\Omega)}.$$
\end{theorem}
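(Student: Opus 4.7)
My plan is to prove the $L^\infty$ bound via the classical Stampacchia truncation method, adapted to the mixed local-nonlocal operator. The guiding idea is that when we test the equation against the truncation $G_k(u) = (u-k)^+$, both the local and nonlocal bilinear forms contribute nonnegative quantities dominating $\|\nabla G_k(u)\|_{L^2}^2$ and $[G_k(u)]_{W^{s,2}}^2$ respectively, so the standard Stampacchia iteration on super-level sets will carry over with essentially no new ingredients beyond verifying the positivity of the nonlocal piece.

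First I would fix $k>0$, set $A_k := \{x \in \Omega : u(x) > k\}$, and insert $\phi = G_k(u) \in X_0^{s,2}(\Omega)$ as a test function in the weak formulation. The local part gives $\int_\Omega \nabla u \cdot \nabla G_k(u)\d x = \int_{A_k}|\nabla G_k(u)|^2\d x$. For the nonlocal part, the crucial pointwise inequality
\begin{equation*}
(u(x)-u(y))\bigl((u-k)^+(x) - (u-k)^+(y)\bigr) \geq \bigl((u-k)^+(x)-(u-k)^+(y)\bigr)^2,
\end{equation*}
combined with the double-integral representation of $\int_\Omega (-\Delta)^{s/2}u \cdot (-\Delta)^{s/2}G_k(u)\d x$, yields a lower bound of $[G_k(u)]_{W^{s,2}(\mathbb{R}^N)}^2 \geq 0$. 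Dropping this nonnegative nonlocal contribution, we obtain
\begin{equation*}
\int_{A_k}|\nabla G_k(u)|^2\d x \leq \int_{A_k} f\,G_k(u)\d x.
\end{equation*}

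Next, by the Sobolev embedding $W_0^{1,2}(\Omega)\hookrightarrow L^{2^*}(\Omega)$ applied to $G_k(u)$ (which vanishes outside $A_k$), together with Hölder's inequality using the exponents $\frac{1}{p}+\frac{1}{2^*}+\alpha = 1$, I would estimate
\begin{equation*}
\|G_k(u)\|_{L^{2^*}(\Omega)}^2 \leq C\,\|f\|_{L^p(\Omega)}\,\|G_k(u)\|_{L^{2^*}(\Omega)}\,|A_k|^{\alpha}, \qquad \alpha = 1 - \frac{1}{p}-\frac{1}{2^*}.
\end{equation*}
Hence $\|G_k(u)\|_{L^{2^*}(\Omega)} \leq C\,\|f\|_{L^p(\Omega)}\,|A_k|^{\alpha}$. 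For $h>k$, the inclusion $A_h \subset A_k$ and Chebyshev give $(h-k)|A_h|^{1/2^*} \leq \|G_k(u)\|_{L^{2^*}(\Omega)}$, producing the recursive inequality
\begin{equation*}
|A_h| \leq \frac{C\,\|f\|_{L^p(\Omega)}^{2^*}}{(h-k)^{2^*}}\,|A_k|^{\alpha\,2^*}, \qquad h > k > 0.
\end{equation*}

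The hypothesis $p > N/2$ is exactly what ensures $\alpha\,2^* > 1$, so Stampacchia's lemma on decreasing real-valued functions applies and produces $k_0>0$ with $|A_{k_0}| = 0$, giving $u \leq k_0 = C\|f\|_{L^p(\Omega)}$ a.e., and similarly for $-u$ using $(-u-k)^+$. The condition $u=0$ in $\mathbb{R}^N\setminus\Omega$ extends the bound to all of $\mathbb{R}^N$. The main subtlety — really the only step that differs from the pure local setting — is justifying that the nonlocal bilinear form when paired against $G_k(u)$ is nonnegative; this rests on the Stroock--Varopoulos-type pointwise inequality above, together with the verification that $G_k(u) \in X_0^{s,2}(\Omega)$ (which follows from $u \in X_0^{s,2}(\Omega)$ and the Lipschitz character of $t \mapsto (t-k)^+$). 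The $C^1$ regularity of $\partial\Omega$ enters only to ensure the required Sobolev embedding and the admissibility of the truncation as a test function in $X_0^{s,2}(\Omega)$.
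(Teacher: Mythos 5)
Your proposal is correct and follows essentially the same route as the cited source: the paper itself does not prove this statement but quotes it as Theorem 4.7 of Biagi--Dipierro--Valdinoci--Vecchi (2022), whose proof is precisely a De Giorgi/Stampacchia truncation on super-level sets, with the nonlocal bilinear form discarded after checking its sign via the same pointwise inequality $(u(x)-u(y))\bigl(G_k(u)(x)-G_k(u)(y)\bigr)\geq \bigl(G_k(u)(x)-G_k(u)(y)\bigr)^2$. The one point worth flagging is that your use of $2^* = \frac{2N}{N-2}$ implicitly assumes $N\geq 3$; for $N=2$ (which the paper's standing assumption $N\geq 2$ allows) you would need the standard modification replacing $2^*$ by an arbitrary finite exponent $q$ large enough that the Stampacchia exponent exceeds $1$, which still works under $p>\frac{N}{2}=1$.
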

\begin{theorem}{\cite[Theorem 4.1]{SVWZ2023}}\label{thm4.2}
    Let $\Omega$ be a $C^{1,1}$ domain in $\mathbb R^N$, $s\in(0,1)$ and $f \in L^p(\Omega)$, where $p\in(1,\infty)$ if $s\in(0,\frac{1}{2}]$ and $p\in(N,\frac{N}{2s-1})$ if $s\in(\frac{1}{2},1).$ Then the following problem 
\begin{equation}\nonumber
-\Delta u+(-\Delta)^s u =f \,\, \text{in} \,\,\Omega,
\end{equation}
has a unique solution $u \in W^{2,p}(\Omega) \cap W_0^{1,p}(\Omega)$. Moreover, there exists $C(\Omega,N,s,p)$ such that $\|u\|_{W^{2,p}(\Omega)}\leq C (\|u\|_{L^p(\Omega)}+\|f\|_{L^p(\Omega)}).$
\end{theorem}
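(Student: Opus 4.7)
The plan is to regard $(-\Delta)^s$ as a subordinate perturbation of the classical Laplacian and reduce the mixed equation to the Poisson equation
\begin{equation*}
-\Delta u = f - (-\Delta)^s u =: F,
\end{equation*}
to which I would apply the classical Calder\'on--Zygmund $W^{2,p}$-theory once $F$ is shown to lie in $L^p(\Omega)$. Uniqueness is immediate from the weak maximum principle for $-\Delta + (-\Delta)^s$ established in Biagi \emph{et al.}\ \cite{BDVV2022}, applied to the difference of two solutions. For existence I would use the continuity method on the family $\mathcal L_t := -\Delta + t(-\Delta)^s$, $t\in[0,1]$: at $t=0$, the classical Agmon--Douglis--Nirenberg theorem gives an isomorphism $\mathcal L_0 : W^{2,p}(\Omega)\cap W_0^{1,p}(\Omega) \to L^p(\Omega)$ when $\partial\Omega\in C^{1,1}$, and for $t=1$ it suffices to establish a uniform a priori estimate $\|u\|_{W^{2,p}}\le C(\|\mathcal L_t u\|_{L^p}+\|u\|_{L^p})$ with $C$ independent of $t$.

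The key a priori bound, via Calder\'on--Zygmund applied to $-\Delta u=F$, reduces to controlling $(-\Delta)^s u$ in $L^p$ by a small multiple of $\|u\|_{W^{2,p}}$ plus a constant times $\|u\|_{L^p}$, so that the nonlocal contribution can be absorbed on the left. I would split the argument into the two $s$-regimes stated. For $s\in(0,\tfrac12]$ the operator $(-\Delta)^s$ is of order $2s\le 1$: the zero-extension of $u\in W_0^{1,p}(\Omega)$ lies in $W^{1,p}(\mathbb R^N)$, and a Mikhlin-type multiplier bound (since $(-\Delta)^s$ is the Fourier multiplier $|\xi|^{2s}$), combined with Gagliardo--Nirenberg interpolation, yields
\begin{equation*}
\|(-\Delta)^s u\|_{L^p(\mathbb R^N)} \le \epsilon\|u\|_{W^{2,p}(\Omega)} + C_\epsilon\|u\|_{L^p(\Omega)},
\end{equation*}
for any $p\in(1,\infty)$. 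For $s\in(\tfrac12,1)$ the hypothesis $p>N$ yields the Morrey embedding $W^{2,p}(\Omega)\hookrightarrow C^{1,\alpha}(\overline\Omega)$ with $\alpha=1-N/p$. A direct estimate on the principal-value integral defining $(-\Delta)^s u$ then shows that for the zero-extension of a $C^{1,\alpha}$-function vanishing on $\partial\Omega$ one has the pointwise bound
\begin{equation*}
|(-\Delta)^s u(x)| \le C\,\|u\|_{C^{1,\alpha}(\overline\Omega)}\,d(x)^{1-2s},\qquad d(x):=\operatorname{dist}(x,\partial\Omega),
\end{equation*}
which is $p$-th power integrable over $\Omega$ exactly when $p(2s-1)<N$, i.e., $p<N/(2s-1)$, matching the theorem's upper threshold.

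The main obstacle I expect is the boundary analysis for $s>\tfrac12$: deriving the sharp pointwise bound $|(-\Delta)^s u(x)|\lesssim d(x)^{1-2s}$ requires splitting the nonlocal integral into the near-diagonal piece $\{|y-x|\le d(x)/2\}$, a middle annulus $\{d(x)/2<|y-x|\le 1\}$, and a far tail, and using the $C^{1,\alpha}$-modulus of $u$ together with $u|_{\partial\Omega}=0$ to control the jump across $\partial\Omega$ created by the zero-extension. A secondary difficulty is closing the bootstrap: the estimate in the $s>\tfrac12$ case is not of immediate small-constant form, so I would combine it with the interpolation $\|u\|_{C^{1,\alpha}(\overline\Omega)} \le \epsilon\|u\|_{W^{2,p}(\Omega)} + C_\epsilon\|u\|_{L^p(\Omega)}$ to manufacture the needed absorption, after which the continuity method yields both the existence in $W^{2,p}(\Omega)\cap W_0^{1,p}(\Omega)$ and the claimed a priori bound.
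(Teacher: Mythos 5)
First, a point of comparison: the paper does not prove this statement at all — it is imported verbatim as \cite[Theorem 4.1]{SVWZ2023} — so there is no in-paper proof to measure you against. Your overall architecture (uniqueness from the maximum principle, existence by the continuity method from $-\Delta$ to $-\Delta+(-\Delta)^s$, with everything hinging on an absorption estimate $\|(-\Delta)^s u\|_{L^p(\Omega)}\le \epsilon\|u\|_{W^{2,p}(\Omega)}+C_\epsilon\|u\|_{L^p(\Omega)}$ for the zero extension) is the right skeleton and is essentially how such perturbative $W^{2,p}$ results are obtained; the $s\le\frac12$ branch via the multiplier bound on $|\xi|^{2s}$ and interpolation is sound.

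The $s\in(\frac12,1)$ branch, however, contains a genuine quantitative error that breaks the argument. You claim that $d(x)^{1-2s}$ is $p$-th power integrable over $\Omega$ exactly when $p(2s-1)<N$. That is the criterion for a \emph{point} singularity $|x|^{1-2s}$; here the singularity sits on the whole $(N-1)$-dimensional boundary, and by the coarea formula $\int_\Omega d(x)^{-\beta}\,dx\simeq\int_0^1 t^{-\beta}\,dt$, so the correct condition is $(2s-1)p<1$, i.e.\ $p<\frac{1}{2s-1}$. Since the theorem simultaneously requires $p>N$, your pointwise bound yields a nonempty range only when $\frac{1}{2s-1}>N$, i.e.\ $s<\frac12+\frac{1}{2N}$; for all other $s\in(\frac12,1)$ the estimate $|(-\Delta)^s u(x)|\lesssim d(x)^{1-2s}$ is simply not in $L^p(\Omega)$ on the stated range, so the absorption step and the claimed match with the threshold $p<\frac{N}{2s-1}$ both fail. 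A secondary problem: with $\alpha=1-N/p$ and $p<\frac{N}{2s-1}$ one only gets $\alpha<2-2s$, which does not guarantee $\alpha>2s-1$; for $s>\frac34$ these are incompatible, so the $C^{1,\alpha}$ modulus alone does not even control the near-diagonal part of the principal-value integral. To recover the full range $p\in(N,\frac{N}{2s-1})$ one must estimate $\|(-\Delta)^s\tilde u\|_{L^p}$ by genuinely $L^p$-based (Calder\'on--Zygmund/Riesz-potential or fractional-Sobolev-extension) arguments exploiting $D^2u\in L^p$, rather than by a pointwise distance bound; as written, your proposal does not close.
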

\begin{remark}\label{RM4.2}
     Let $2>\frac{N}{p}$. By the Sobolev embedding theorem \cite{EVANS2022}, we have,
     $$W^{2,p}(\Omega) \subset C^k(\bar{\Omega}),~\,\,\, k=~\text{integral part of}~\bigg(2-\frac{N}{p}\bigg).$$
    Therefore, $u \in C(\bar{\Omega})$, where $p\in(\frac{N}{2},\infty)$ if $s\in(0,\frac{1}{2}]$ and $p\in(N,\frac{N}{2s-1})$ if $s\in(\frac{1}{2},1).$
\end{remark}
With the above results, we now state and prove the existence and uniqueness of $\it{duality~ solution}$ to the problem \eqref{eq4.1 DP}. 
\begin{theorem}\label{thm4.3 DS}
    There exists a unique $\it{duality~solution}$ $u\in L^1(\Omega)$ for the problem \eqref{eq4.1 DP} in the sense of Definition \ref{dual def}. Moreover, $u\in L^q(\Omega)$, where $q\in(1,\frac{N}{N-2})$ if $s\in(0,\frac{1}{2}]$ and $q\in(\frac{N}{N-2s+1},\frac{N}{N-1})$ if $s\in(\frac{1}{2},1).$
\end{theorem}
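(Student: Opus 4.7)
The plan is to construct the duality solution as the Riesz representative of a bounded linear functional on an appropriate $L^p$ space, where the functional is defined via the solution of the dual problem. Concretely, for each $g \in C_c^\infty(\Omega)$, let $w_g$ denote the unique weak solution in $X_0^{s,2}(\Omega)$ of $-\Delta w_g + (-\Delta)^s w_g = g$ in $\Omega$, with $w_g = 0$ in $\mathbb{R}^N\setminus\Omega$ (existence and uniqueness follow via Lax--Milgram as in Lemma \ref{lmm2.5}). Define $T\colon C_c^\infty(\Omega) \to \mathbb{R}$ by
\begin{equation*}
T(g) \;=\; \int_\Omega w_g \, \d\mu.
\end{equation*}
Linearity of $T$ is clear from the uniqueness of $w_g$. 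The crux is to show $T$ extends to a bounded linear functional on $L^p(\Omega)$ for a suitable exponent $p$, because then the Riesz representation theorem produces a unique $u \in L^{p'}(\Omega)$ with $T(g) = \int_\Omega u g \, \d x$, and extending $u$ by zero outside $\Omega$ gives exactly the identity required by Definition \ref{dual def}.

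To bound $T$, I would use the regularity results of Theorems \ref{thm4.1}--\ref{thm4.2} combined with Remark \ref{RM4.2} to show $w_g \in C(\bar\Omega)$ with $\|w_g\|_{L^\infty(\Omega)} \le C \|g\|_{L^p(\Omega)}$, whence
\begin{equation*}
|T(g)| \;\le\; \|w_g\|_{L^\infty(\Omega)}\, \|\mu\|_{\mathcal{M}(\Omega)} \;\le\; C\, \|\mu\|_{\mathcal{M}(\Omega)}\, \|g\|_{L^p(\Omega)}.
\end{equation*}
For $s \in (0,\tfrac12]$, Theorem \ref{thm4.2} applies to any $p \in (1,\infty)$, and I choose $p > N/2$ so that Remark \ref{RM4.2} embeds $W^{2,p}(\Omega) \hookrightarrow C(\bar\Omega)$; letting $p \downarrow N/2$ recovers the full range $q \in (1, N/(N-2))$ for the dual exponent. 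For $s \in (\tfrac12, 1)$, Theorem \ref{thm4.2} forces $p \in (N, N/(2s-1))$, whose conjugate range is exactly $q = p' \in (N/(N-2s+1), N/(N-1))$; the condition $p > N$ again guarantees $w_g \in C(\bar\Omega)$ via Remark \ref{RM4.2}. By a density argument, $T$ extends continuously to $L^p(\Omega)$ and the Riesz representation theorem yields $u \in L^q(\Omega)$ satisfying \eqref{eq4.2 DF} for every $g \in C_c^\infty(\Omega)$. Since $\Omega$ is bounded and $q > 1$, it follows that $u \in L^1(\Omega)$.

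Uniqueness is straightforward: if $u_1, u_2 \in L^1(\Omega)$ both satisfy \eqref{eq4.2 DF}, then $\int_\Omega (u_1 - u_2) g \, \d x = 0$ for all $g \in C_c^\infty(\Omega)$, and the density of $C_c^\infty(\Omega)$ in $L^\infty(\Omega)^*$-duality forces $u_1 = u_2$ almost everywhere in $\Omega$. The main obstacle I foresee lies in the case $s \in (\tfrac12, 1)$: the restriction $p \in (N, N/(2s-1))$ in Theorem \ref{thm4.2} is delicate because $N/(2s-1) \to N^+$ as $s \to 1^-$, and one must verify that this interval is nonempty and genuinely yields $w_g \in C(\bar\Omega)$ with the claimed continuity estimate. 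Once this is secured, the remainder of the argument is a clean application of duality, and the equivalence with weak solutions asserted in the theorem follows immediately from Lemma \ref{lmm 4.1}.
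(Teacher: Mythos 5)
Your proposal is correct and follows essentially the same route as the paper: define the functional $T(g)=\int_\Omega w_g\,\d\mu$, bound it on $L^p(\Omega)$ via the $L^\infty$/$C(\bar\Omega)$ regularity of Theorems \ref{thm4.1}--\ref{thm4.2} and Remark \ref{RM4.2}, split into the cases $s\in(0,\tfrac12]$ and $s\in(\tfrac12,1)$ with the matching admissible ranges of $p$, and invoke the Riesz representation theorem to obtain $u\in L^q(\Omega)$ with $q=p'$, uniqueness then following from density of $C_c^\infty(\Omega)$. The concern you raise about the interval $(N,N/(2s-1))$ is harmless since it is nonempty for every $s\in(\tfrac12,1)$, which is exactly what the paper implicitly relies on.
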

\begin{proof}
    We prove the existence by dividing it into two cases. For any $p\in [1,\infty)$, using the density of $C^\infty_c(\Omega)$ in $L^p(\Omega)$, we get that the Definition \ref{dual def} holds for any $g\in L^p(\Omega)$.\\
    \par \textbf{Case I:} If $0<s \leq \frac{1}{2}$. Fix $p \in (\frac{N}{2},\infty)$. For any $g\in L^p(\Omega)$, we define $T:L^p(\Omega)\rightarrow \mathbb R$ as 
    $$T(g)=\int_\Omega w(x) \d \mu.$$
    Then by the using of Theorem \ref{thm4.1}, and Remark \ref{RM4.2}, we deduce that $T$ is well-defined and we get 
    $$|T(g)|\leq \|w\|_{L^\infty(\Omega)}|\mu|(\Omega)\leq C(\Omega,\mu,N,s,p) \|g\|_{L^p(\Omega)}.$$
    Thus, T is a bounded linear functional (continuous) on $L^p(\Omega)$. By Riesz representation theorem, there exists a unique function $u \in L^q(\Omega),$ $q=\frac{p}{p-1}$ such that
    \begin{equation}\label{eq4.5L}
        \int_\Omega ug \d x= \int_\Omega w \d \mu.
    \end{equation}
    We can repeat the above argument for any $p \in (\frac{N}{2},\infty)$ to obtain a unique $u \in L^q(\Omega),$ with $1<q<\frac{N}{N-2}$ such that \eqref{eq4.5L} holds.\\
    
    \par \textbf{Case II:} If $\frac{1}{2}<s<1$. Fix $p\in (N,\frac{N}{2s-1})$. For any $g\in L^p(\Omega)$, we define $T:L^p(\Omega)\rightarrow \mathbb R$ as
    $$T(g)=\int_\Omega w(x) \d \mu.$$
   Proceeding with similar arguments as in \textbf{Case I}, we obtain that T is a bounded linear functional (continuous) on $L^p(\Omega)$. Thus, the Riesz Representation theorem guarantees there exists a unique function $u \in L^q(\Omega)$ with $q=\frac{p}{p-1}$ satisfying the equation \eqref{eq4.5L}. Again repeating this argument for $p\in (N,\frac{N}{2s-1}),$ we get a unique $u \in L^q(\Omega)$ with $\frac{N}{N-2s+1}<q<\frac{N}{N-1}$ such that \eqref{eq4.5L} holds.\\
    It remains to prove the uniqueness result. Let $u$ and $v$ be two {\it{duality}} solutions of the problem \eqref{eq4.1 DP} in the sense of Definition \ref{dual def}. Therefore, we have
    $$\int_\Omega (u-v)g \d x=0, ~ \,\,\, \forall~ g\in C_c^\infty(\Omega).$$
    This implies that $u=v$ a.e $\Omega$. Therefore, the {\it{duality}} solution exists uniquely. This completes the proof.
\end{proof}
Let us consider $\mu$ a nonnegative, bounded Radon measure in Theorem \ref{thm4.3 DS}. Then we have the following Corollary related to the following problem,
 \begin{equation}\label{eq5.5NDP}
\begin{aligned}
   -\Delta u+(-\Delta)^s & u =\mu \,\, \text{in} \,\,\Omega,\\
  & u=0\,\, \text{in}\,\,\mathbb  R^N \setminus \Omega.
   \end{aligned}
\end{equation}
\begin{cor}\label{RM5.1}
     Theorem \ref{thm4.3 DS} holds for the problem \eqref{eq5.5NDP}, that is the problem \eqref{eq5.5NDP} possesses a \emph{duality} solution. Moreover, every $\it{duality}$ solution to the problem \eqref{eq5.5NDP} is a $\it{veryweak}$ solution in the sense of Definition \ref{def5.1 VW}. 
     \end{cor}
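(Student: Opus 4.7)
\noindent\textbf{Proof proposal for Corollary \ref{RM5.1}.}

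The first assertion is essentially a relabeling. Since every nonnegative bounded Radon measure is a bounded Radon measure, Theorem \ref{thm4.3 DS} applies verbatim to \eqref{eq5.5NDP}: the proof via Riesz representation on $L^p(\Omega)$ (in the two cases $s\in(0,\tfrac12]$ and $s\in(\tfrac12,1)$) never uses the sign of $\mu$, only $|\mu|(\Omega)<\infty$. So one obtains a unique duality solution $u\in L^1(\Omega)$ satisfying the same integrability bounds in $L^q(\Omega)$. I would write this step as a one-line invocation of Theorem \ref{thm4.3 DS}.

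For the second assertion, the strategy is to read the duality formulation \eqref{eq4.2 DF} as the veryweak formulation after an integration-by-parts move. Recall that \eqref{eq4.2 DF} says
\[
\int_\Omega u g\,\d x \;=\; \int_\Omega w\,\d\mu,\qquad \forall\,g\in C_c^\infty(\Omega),
\]
where $w$ solves $\mathcal{L}w=g$ in $\Omega$, $w=0$ on $\mathbb{R}^N\setminus\Omega$. The plan is: given any admissible test function $\phi$ in the veryweak class of Definition \ref{def5.1 VW} (functions vanishing on $\partial\Omega$ with $\mathcal{L}\phi$ at least in $L^\infty(\Omega)$, or the analogous regularity), set $g:=\mathcal{L}\phi$. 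By Theorem \ref{thm4.2} together with Remark \ref{RM4.2}, the unique weak/strong solution $w$ of $\mathcal{L}w=g$ with zero exterior datum coincides with $\phi$ (uniqueness of the Dirichlet problem for $\mathcal{L}$). Substituting into \eqref{eq4.2 DF} yields
\[
\int_\Omega u\,\mathcal{L}\phi\,\d x \;=\; \int_\Omega u g\,\d x \;=\; \int_\Omega \phi\,\d\mu,
\]
which is exactly the veryweak formulation.

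The main obstacle is matching the function classes on both sides: the duality definition uses $g\in C_c^\infty(\Omega)$, whereas the veryweak formulation tests against $\phi$ whose $\mathcal{L}$-image is typically only in some $L^p(\Omega)$. I would handle this by a density/approximation argument: approximate an arbitrary veryweak test function $\phi$ by a sequence $\phi_n$ with $g_n:=\mathcal{L}\phi_n\in C_c^\infty(\Omega)$, apply \eqref{eq4.2 DF} for each $g_n$, and pass to the limit. The passage is controlled by (i) the $L^\infty$-bound $\|w_n\|_\infty \le C\|g_n\|_{L^p}$ from Theorem \ref{thm4.1} (for $p>N/2$ when $s\le 1/2$, or $p\in(N,N/(2s-1))$ when $s>1/2$) together with the $C(\bar\Omega)$ regularity from Remark \ref{RM4.2}, which allows us to integrate against $\d\mu$; and (ii) the bound $\|u\|_{L^q}<\infty$ from Theorem \ref{thm4.3 DS} paired with $g_n\to\mathcal{L}\phi$ in $L^{q'}(\Omega)$, which handles the left-hand side. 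Dominated convergence on both sides yields the veryweak identity. Uniqueness of the veryweak solution (if it is part of Definition \ref{def5.1 VW}) follows from the same argument applied to a difference $u-\tilde u$ by choosing $\phi$ as the solution of $\mathcal{L}\phi = \operatorname{sgn}(u-\tilde u)$ after regularization.
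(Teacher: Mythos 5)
Your argument is correct and is, in substance, the implicit one: the paper states Corollary~\ref{RM5.1} without a separate proof, treating it as a direct consequence of Theorem~\ref{thm4.3 DS} plus the identification $w=\phi$ that you carry out. The first assertion, as you note, is just Theorem~\ref{thm4.3 DS} applied to a nonnegative measure. For the second assertion your route is sound, but the density step can be trimmed: inside the proof of Theorem~\ref{thm4.3 DS} the paper already records that, by density of $C_c^\infty(\Omega)$ in $L^p(\Omega)$ and the continuity of the functional $T$ there (via Theorem~\ref{thm4.1} and Remark~\ref{RM4.2}), Definition~\ref{dual def} holds for every $g\in L^p(\Omega)$ in the admissible range. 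So instead of building an approximating sequence $\phi_n$ with $\mathcal{L}\phi_n\in C_c^\infty(\Omega)$ and passing to the limit by dominated convergence, one may simply invoke this already-extended duality identity with $g=\mathcal{L}\phi$, identify $w=\phi$ by uniqueness of the Dirichlet problem (Theorem~\ref{thm4.2}), and read off the veryweak formulation directly. Your version re-proves a step the paper has already banked; the direct version skips it. Both are correct. The uniqueness remark at the end is extraneous here, since the Corollary claims only that every duality solution is a veryweak solution, not uniqueness of the latter.
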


\noindent \textit{Proof of Theorem \ref{t1.2}}: The result is straight forward from Lemma \ref{lmm 4.1} and  Theorem \ref{thm4.3 DS}.\\

\noindent We are now ready with sufficient tools to dive into the proof of the existence of a $\it{veryweak}$ solution to the problem \eqref{eq2:problem}. Note that  Theorem \ref{thm4.3 DS} remain true if we replace $\mu$ by $f+\mu$, where $0\leq f\in L^1(\Omega)$.

\section{Existence of veryweak solution to the problem \ref{eq2:problem} for \texorpdfstring{$0<\gamma<1$.}{0<γ<1}}\label{sec5}
In this section, we  assume $\Omega$ is a bounded domain of class $C^{2,\beta}$ for some $0<\beta<1$. We consider the following elliptic problem,
\begin{align}\label{eq5.1p<1}
   -\Delta u+(-\Delta)^s u &= \frac{f(x)}{u^\gamma} +\mu \,\, \text{in} \,\,\Omega,\\ 
   u&=0\,\, \text{in}\,\,\mathbb  R^N \setminus \Omega, \nonumber
  \end{align}
where $0 < \gamma< 1 $, $0<s<1$, $f(x)>0,\,\forall~ x\in \Bar{\Omega}$ such that $f\in C^{\beta}(\Bar{\Omega})$ and $\mu$ is a nonnegative, bounded Radon measure over $\Omega$. Observe that, we can give a suitable notion of solution to the problem \eqref{eq5.1p<1} by considering only the integrable functions. We fix $0<\gamma<1$ and define the ``{\it{veryweak}} solution''  as follows.
\begin{definition}\label{def5.1 VW}
A {\it{veryweak}} solution to the problem \eqref{eq5.1p<1} is a function $u \in L^1(\Omega)$ such that $u>0$ {\it{a.e.}} in $\Omega$, $fu^{-\gamma}\in L^1(\Omega)$ and 
\begin{align}\label{eq5.2 Vw}
    \int_\Omega u (-\Delta)\phi+\int_\Omega u (-\Delta)^s\phi=\int_\Omega \frac{f\phi}{u^\gamma} +\int_\Omega \phi \d \mu, ~ \ \forall\, \phi \in C^2_0(\Bar{\Omega}).
   % \text{such that}\,\,\, (-\Delta)^s \phi \in L^\infty(\Omega),\nonumber
\end{align}
\end{definition}
We prove the existence of a {\it{veryweak}} solution to the problem \eqref{eq5.1p<1} by the ``sub-super solution'' method. We define the {\it{veryweak subsolution}} and {\it{veryweak supersolution}} to \eqref{eq5.1p<1} as below.
\begin{definition}\label{def5.2SSS}
    A {\it{veryweak subsolution}} to the problem \eqref{eq5.1p<1} is a function $\underline u\in L^1(\Omega)$ such that $ \underline u>0$ in $\Omega$, $f\underline u^{-\gamma}\in L^1(\Omega)$ and for all nonnegative $\phi \in C^2_0(\Bar{\Omega})$,
    \begin{align}\label{eq5.3sub}
    \int_\Omega \underline u (-\Delta)\phi+\int_\Omega \underline u (-\Delta)^s\phi \leq \int_\Omega \frac{f\phi}{\underline u^\gamma} +\int_\Omega \phi \d \mu.\\ \nonumber
     %\text{such that}\,\,\, (-\Delta)^s \phi \in L^\infty(\Omega),
\end{align}
Similarly, a {\it{veryweak supersolution}} to the problem \eqref{eq5.1p<1} is a function $\Bar{u}\in L^1(\Omega)$ such that $ \Bar{ u} >0$ in $\Omega$, $f\bar{u}^{-\gamma}\in L^1(\Omega)$ and for all nonnegative $\phi \in C^2_0(\Bar{\Omega})$,
 \begin{align}\label{eq5.4sup}
    \int_\Omega \bar u (-\Delta)\phi+\int_\Omega \bar u (-\Delta)^s\phi \geq \int_\Omega \frac{f\phi}{\bar u^\gamma} +\int_\Omega \phi \d \mu.
    %\text{such that}\,\,\, (-\Delta)^s \phi \in L^\infty(\Omega),
    \end{align}
\end{definition}
Now we are focused on the existence of the \emph{veryweak} solution to the problem \eqref{eq5.1p<1}.
\begin{theorem}\label{thm5.1}
    Let $\underline u$ be a {\it{veryweak subsolution}} and $\bar u$ be a {\it{veryweak supersolution}} of the equation \eqref{eq5.1p<1} such that $\underline u \leq \bar u$ in $\Omega$. Then there exists a {\it{veryweak}} solution $u$ to the problem \eqref{eq5.1p<1}, in the sense of Definition \ref{def5.1 VW} such that $\underline u \leq u \leq \bar u$.
\end{theorem}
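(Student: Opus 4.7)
The plan is to apply Schauder's fixed point theorem to a truncated auxiliary problem within the duality/weak solution framework of Section \ref{sec4}, and then to show the fixed point is pinched between $\underline{u}$ and $\bar{u}$ via the Kato-type inequality and the veryweak maximum principle developed in Section \ref{sec6}.

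First I would freeze the singular nonlinearity by introducing, for every measurable $t$,
\[
H(x,t):=\frac{f(x)}{\bigl(\max\{\underline{u}(x),\min\{\bar{u}(x),t\}\}\bigr)^{\gamma}},
\]
so that $0\le H(x,t)\le f(x)/\underline{u}(x)^{\gamma}$, with the upper bound in $L^{1}(\Omega)$ by Definition \ref{def5.2SSS}. Crucially $H(x,t)=f(x)/t^{\gamma}$ whenever $\underline{u}(x)\le t\le\bar{u}(x)$, while $H(\cdot,v)$ stays uniformly $L^{1}$-bounded independently of $v\in L^{1}(\Omega)$. For each such $v$, Theorem \ref{thm4.3 DS} (applied with the datum $H(\cdot,v)+\mu$, as permitted by the remark following Theorem \ref{t1.2}) produces a unique duality solution $T(v)\in L^{q}(\Omega)$ for the admissible range of $q>1$ to $\mathcal{L}w=H(x,v)+\mu$ with zero exterior data. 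By Lemma \ref{lmm 4.1} this $T(v)$ is also a weak solution, which combined with Theorem \ref{t1.1} places $T(v)$ in $X_{0}^{s,r}(\Omega)$ for every $r<\tfrac{N}{N-1}$, with norm controlled solely by $\|f/\underline{u}^{\gamma}\|_{L^{1}(\Omega)}+\|\mu\|_{\mathcal{M}(\Omega)}$. The compact embedding $X_{0}^{s,r}(\Omega)\hookrightarrow L^{1}(\Omega)$ from Theorem \ref{thm cpt} then makes $T:L^{1}(\Omega)\to L^{1}(\Omega)$ compact, while continuity follows from dominated convergence applied to $H$ and the well-posedness of the linear duality problem. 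Schauder's fixed point theorem yields $u\in L^{1}(\Omega)$ with $\mathcal{L}u=H(x,u)+\mu$ in the veryweak/duality sense.

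The decisive step is to verify that $\underline{u}\le u\le\bar{u}$ a.e., so that the truncation becomes vacuous. For the lower bound, set $\phi:=\underline{u}-u$. Subtracting the veryweak equation for $u$ from the veryweak subsolution inequality cancels the measure terms and leaves
\[
\mathcal{L}\phi\le\frac{f}{\underline{u}^{\gamma}}-H(x,u)\qquad\text{in the veryweak sense.}
\]
On $\{\phi>0\}=\{u<\underline{u}\}$ the truncation forces $H(x,u)=f/\underline{u}^{\gamma}$, so the right-hand side vanishes pointwise on this set. The Kato-type inequality for $\mathcal{L}$ (Theorem \ref{thm6.1}, \eqref{keto inq 2}) then yields $\mathcal{L}\phi^{+}\le 0$ in the veryweak sense; since $\phi^{+}$ vanishes on $\mathbb{R}^{N}\setminus\Omega$, the veryweak maximum principle (Lemma \ref{eq6.15A}) gives $\phi^{+}\equiv 0$, hence $u\ge\underline{u}$ a.e. A fully symmetric argument applied to $\psi:=u-\bar{u}$, using the supersolution inequality and the fact that $H(x,u)=f/\bar{u}^{\gamma}$ on $\{u>\bar{u}\}$, yields $u\le\bar{u}$ a.e. With the sandwich secured, $H(x,u)=f/u^{\gamma}$, so $u$ satisfies \eqref{eq5.2 Vw}; moreover $u\ge\underline{u}>0$ and $fu^{-\gamma}\le f\underline{u}^{-\gamma}\in L^{1}(\Omega)$, completing the verification of Definition \ref{def5.1 VW}.

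The main obstacle I expect is executing the comparison at the veryweak level: both $u$ and $\underline{u}$ are only $L^{1}$, so classical test-function manipulations are not available, and one must rely entirely on the Kato-type inequality and the veryweak maximum principle of Section \ref{sec6}, taking care that the measure parts cancel exactly before the truncation is invoked to give the correct sign on the source term. A secondary technical point is the continuity of the fixed point map $T$ in $L^{1}$, which should reduce to a routine Vitali/dominated-convergence argument thanks to the uniform $L^{1}$ domination of $H(\cdot,v)$ by $f/\underline{u}^{\gamma}$.
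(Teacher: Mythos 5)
Your proposal is correct and follows essentially the same route as the paper's own proof: your truncated source $H(x,t)$ coincides pointwise with the paper's $\bar g(x,t)$, you run the same Schauder fixed-point argument on $L^1(\Omega)$ via the uniform $L^1$ bound $H(\cdot,v)\le f\underline u^{-\gamma}$ and the compact embedding $X_0^{s,q}(\Omega)\hookrightarrow L^1(\Omega)$, and you establish the pinching $\underline u\le u\le \bar u$ by subtracting the sub/supersolution inequalities and invoking the Kato-type inequality. The only cosmetic difference is that you conclude the comparison by applying Theorem \ref{thm6.1} together with the veryweak maximum principle (Lemma \ref{eq6.15A}), whereas the paper reaches the same conclusion by testing \eqref{keto inq 2} against the fixed $\phi_0$ with $\mathcal{L}\phi_0=1$; also note your citation of Theorem \ref{t1.1} for the $X_0^{s,q}$ bound on $T(v)$ is slightly misplaced---the relevant estimate is the one for the linear $L^1$-plus-measure problem, which the paper obtains from Oliva--Petitta \cite[Lemma 2.6]{OP2018} and V\'eron \cite[Corollary 2.8]{V2004}.
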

\begin{proof}
    With the notion of $\underline u>0$ and $\bar u$ as in the Definition \ref{def5.2SSS}, we truncate the singular term of the problem \eqref{eq5.1p<1} with a nonlinearity $\bar g(\cdot,\cdot):\Omega \cross \mathbb R \rightarrow \mathbb R$ which is given by
    $$\bar g(x,t)=\begin{cases}
        & f(x)\underline u(x)^{-\gamma}, \,\,\, \text{if}\,\,\, t < \underline u(x),\\
        & f(x)t^{-\gamma}, \,\,\,  \text{if}\,\,\, \underline u(x) \leq t \leq \bar u(x),\\
       & f(x)\bar u(x)^{-\gamma}, \,\,\,  \text{if}\,\,\, t> \bar u(x).
    \end{cases}$$
     Note that $\bar g$ is well-defined {\it{a.e.}} in $\Omega$. Moreover, using  $f\underline u^{-\gamma}$ and $f\bar u^{-\gamma}$ $\in$ $L^1(\Omega)$  we have $\bar g(x,v(x)) \in L^1(\Omega)$ for every $v \in L^1(\Omega)$. We now proceed to complete the proof. Consider the problem
     \begin{align}\label{eq5.6G}
-\Delta u+(-\Delta)^s  u &= \bar g(x,u)+\mu \,\, \text{in} \,\,\Omega,  \\ 
   u&=0\,\, \text{in}\,\,\mathbb  R^N \setminus \nonumber \Omega.
 \end{align}
    \noindent \textbf{Claim 1:} Let $u$ solves \eqref{eq5.6G} in the {\it{veryweak}} sense, then we have $\underline u \leq u \leq \bar u$. This serves the purpose of concluding that $u$ is a {\it{veryweak}} solution to the problem \eqref{eq5.1p<1} in the sense of Definition \ref{def5.1 VW} as $\bar g(.,u)=g(u)=fu^{-\gamma} \in L^1(\Omega)$. Now, the {\it{veryweak}} formulation for the truncated problem \eqref{eq5.6G} is given by
    \begin{align}\label{eq5.7}
    \int_\Omega u (-\Delta)\phi+\int_\Omega u (-\Delta)^s\phi=\int_\Omega \bar g(x,u) \phi +\int_\Omega \phi \d \mu, ~  \forall\, \phi \in C^2_0(\Bar{\Omega}). 
   % \text{such that}\,\,\, (-\Delta)^s \phi \in L^\infty(\Omega), 
   \end{align} 
    To establish the claim, we first prove $u\leq \bar u$ in $\Omega$, where $u$ is a solution to \eqref{eq5.6G} and $\bar u$ is a {\it{veryweak supersolution}} to \eqref{eq5.1p<1}. Now for all nonnegative $\phi \in C_0^2(\bar\Omega)$, we subtract \eqref{eq5.4sup} from \eqref{eq5.7} to get
    %$\forall$ $\phi \in C_0^2(\Omega)$ such that $\phi\geq 0$, 
     %and $(-\Delta )^s \phi \in L^\infty(\Omega)$ 
    \begin{align}
    \int_\Omega (u-\bar u) (-\Delta)\phi+\int_\Omega (u- \bar u) (-\Delta)^s\phi & \leq \int_\Omega \bigg( \bar g(x,u)-\frac{f}{\bar u^\gamma}\bigg)\phi \nonumber\\ 
    & = \int_\Omega \chi_{\{u\leq \bar u\}}\bigg( \bar g(x,u)-\frac{f}{\bar u^\gamma}\bigg)\phi.
    \end{align}
    On applying the Kato-type inequality \eqref{keto inq 2}, we obtain
    $$0\leq\int_\Omega (u-\bar u)^+ \leq  \int_\Omega \chi_{\{u\leq \bar u\}}\bigg( {\bar g(x,u)}-\frac{f}{\bar u^\gamma}\bigg)(sign_+(u-\bar u))\phi_0=0, $$
    where $\phi_0\in C_0^2(\bar \Omega)$ satisfy $((-\Delta)+(-\Delta)^s)\phi_0=1$ in $\Omega$ and $\phi=0$ in $\mathbb R^N \setminus \Omega.$ Thus, we have
    $$ \int_\Omega (u-\bar u)^+ = 0,$$
    which implies that $u \leq \bar u$ {\it{a.e.}} in $\Omega$. One can proceed on similar lines to obtain $\underline u \leq u$ in $\Omega$, where $\underline u$ is a {\it{veryweak subsolution}} to \eqref{eq5.1p<1}. This completes the claim.\\
   We now prove the existence of a $\it{veryweak}$ solution to the problem \eqref{eq5.1p<1} in the sense of Definition \ref{def5.1 VW}. Let us consider $$G:L^1(\Omega) \rightarrow L^1(\Omega)\text{ such that }G(v):=u,$$
   where $v \in L^1(\Omega)$ is assigned to a {\it{veryweak}} solution $u \in L^1(\Omega)$ to the problem \eqref{eq5.6G}. The map $G$ is well-defined. Indeed from the Theorem \ref{thm4.3 DS}, we have the existence of a unique $\it{veryweak}$ solution to the problem \eqref{eq5.6G} (see also Corollary \ref{RM5.1}).\\
\noindent \textbf{Claim 2:} The map $G$ is continuous and relatively compact in $L^1(\Omega)$.\\
 Consider a sequence $(v_n)$  converging to $v$ in $L^1(\Omega)$. Since $t^{-\gamma}$ is decreasing, we get $|\bar g(x,v_n)|\leq f\underline u^{-\gamma}.$ Thus by the Lebesgue dominated convergence theorem, we have
 $$\|(\bar g(x,v_n)+\mu)-(\bar g(x,v)+\mu)\|_{L^1(\Omega)}=\|\bar g(x,v_n)- \bar g(x,v)\|_{L^1(\Omega)} \rightarrow 0 \text{ as }  n \rightarrow \infty.$$ 
 The uniqueness of {\it{veryweak}} solution to the problem \eqref{eq5.6G}, asserts that $G(v_n):=u_n$ converges to $G(v):=u.$ This proves the continuity of $G$ in $L^1(\Omega)$. Now for every $v\in L^1(\Omega)$, we have
 $$\|\bar g(x,v)+\mu\|_{\mathcal{M}(\Omega)}\leq \|\bar g(x,v)\|_{\mathcal{M}(\Omega)}+\|\mu\|_{\mathcal{M}(\Omega)}\leq \left\|\frac{f}{\underline u^\gamma}\right\|_{L^1(\Omega)}+\|\mu\|_{\mathcal{M}(\Omega)}.$$
Therefore, due to Oliva and Petitta \cite[Lemma 2.6]{OP2018} (see also V\'eron \cite[Corollary 2.8]{V2004}), we get $G(v)=u$ is bounded in $ X_0^{s,p}(\Omega)$ for every $q<\frac{N}{N-1}$. On using the compact embedding $X_0^{s,p}(\Omega)\hookrightarrow L^1(\Omega)$, we obtain $G(L^1(\Omega))$ is bounded. Thus, we get $G$ is relatively compact in $L^1(\Omega)$. Therefore, by Schauder's fixed point theorem, we get $G$ has a fixed point $u\in L^1(\Omega)$ of $G$. Finally, using the results of \textbf{\textit{Claim 1}}, we conclude that $u$ is a {\it{veryweak}} solution to the problem \eqref{eq5.1p<1} such that $\underline u \leq u \leq \bar u$. This completes the proof.
\end{proof}
 \begin{theorem}\label{thm5.2}
     The problem \eqref{eq5.1p<1} possesses a $\it{veryweak}$ solution in the sense of the Definition \ref{def5.1 VW}.
 \end{theorem}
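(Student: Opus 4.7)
My plan is to derive Theorem \ref{thm5.2} from the sub--super solution principle of Theorem \ref{thm5.1} by exhibiting an ordered pair $\underline u\le \bar u$ of veryweak sub/supersolutions of \eqref{eq5.1p<1}. For the subsolution, I would take $\underline u$ to be a weak solution of the purely singular problem (i.e. \eqref{eq5.1p<1} with $\mu\equiv 0$), whose existence for $0<\gamma<1$ is produced by Theorem \ref{thm3.1<1}. Under the hypothesis $f\in C^{\beta}(\bar\Omega)$ with $f>0$ on $\bar\Omega$, the strong maximum principle together with a Hopf-type boundary estimate for $\mathcal L=-\Delta+(-\Delta)^s$ yields $\underline u(x)\ge c_0\,\delta(x)$ for some $c_0>0$, where $\delta(x)=\mathrm{dist}(x,\partial\Omega)$. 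Since $\gamma<1$ this gives $f\underline u^{-\gamma}\le C\delta^{-\gamma}\in L^1(\Omega)$. To pass from the weak to the veryweak formulation I would test against a nonnegative $\phi\in C^2_0(\bar\Omega)$ and use the integration-by-parts identities $\int_\Omega \nabla \underline u\cdot\nabla\phi=\int_\Omega \underline u(-\Delta)\phi$ and $\int_\Omega (-\Delta)^{s/2}\underline u\cdot(-\Delta)^{s/2}\phi=\int_\Omega \underline u(-\Delta)^s\phi$, both valid since $\underline u\in X_0^{s,q}(\Omega)$ and $\phi\in C^2_0(\bar\Omega)$; adding the nonnegative term $\int_\Omega \phi\,d\mu$ to the right-hand side then produces \eqref{eq5.3sub}.

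For the supersolution I would set $\bar u:=\underline u+w$, where $w\in L^1(\Omega)$ is the unique duality (equivalently veryweak) solution of the linear measure problem \eqref{eq5.5NDP} furnished by Corollary \ref{RM5.1}. Since $\mu\ge 0$, the veryweak maximum principle (Lemma \ref{eq6.15A}) yields $w\ge 0$ in $\Omega$, so $\bar u\ge \underline u>0$ and $f\bar u^{-\gamma}\le f\underline u^{-\gamma}\in L^1(\Omega)$. Summing the veryweak identities for $\underline u$ and $w$ and exploiting the monotonicity of $t\mapsto t^{-\gamma}$ on $(0,\infty)$ gives, for every nonnegative $\phi\in C^2_0(\bar\Omega)$,
\[
\int_\Omega \bar u (-\Delta)\phi+\int_\Omega \bar u (-\Delta)^s\phi=\int_\Omega\frac{f\phi}{\underline u^\gamma}+\int_\Omega \phi\,d\mu\ge \int_\Omega\frac{f\phi}{\bar u^\gamma}+\int_\Omega \phi\,d\mu,
\]
which is exactly \eqref{eq5.4sup}. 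All hypotheses of Theorem \ref{thm5.1} are then met, and that theorem delivers a veryweak solution $u\in L^1(\Omega)$ of \eqref{eq5.1p<1} with $\underline u\le u\le \bar u$.

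The principal technical obstacle is the Hopf-type lower bound $\underline u\ge c_0\,\delta(x)$ for the mixed operator, since it is precisely what secures $f\underline u^{-\gamma}\in L^1(\Omega)$ when $\gamma<1$. While this bound is textbook for $-\Delta$ and well documented for $(-\Delta)^s$, the mixed setting requires an adaptation combining the strong maximum principle of Biagi--Dipierro--Valdinoci--Vecchi with a suitable barrier construction; the weak comparison principle announced in Section \ref{sec6} is the natural tool. Once the boundary estimate is in hand, the remainder of the argument reduces to the monotonicity of $t\mapsto t^{-\gamma}$, linearity of $\mathcal L$, and routine integration by parts, so the bulk of the work is front-loaded into producing a sufficiently regular subsolution to the purely singular problem.
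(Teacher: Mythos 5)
Your proposal follows essentially the same route as the paper: produce the subsolution from the purely singular problem, obtain the supersolution by adding to it the duality (equivalently veryweak) solution of the linear measure problem \eqref{eq5.5NDP}, secure $f\underline u^{-\gamma}\in L^1(\Omega)$ via a boundary lower bound $\underline u\ge C\delta$, and invoke Theorem \ref{thm5.1}. The only place where your sketch diverges in detail is the proof of that boundary lower bound: the paper does not apply a Hopf lemma or a barrier-plus-comparison argument to $\underline u$ directly (which is only known to lie in a low-integrability space and has a singular right-hand side); instead it runs the monotone approximating sequence $v_n$ for the pure singular problem, observes that the first term $v_1\in L^\infty(\Omega)$ has bounded data $f_1/(v_1+1)^\gamma$, applies the Brezis--Cabr\'e type estimate of Lemma \ref{lm 6.4} to $v_1$ to get $v_1\ge C\delta$, and then uses monotonicity of $(v_n)$ to conclude $\underline u\ge v_1\ge C\delta$. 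The weak comparison principle (Lemma \ref{eq6.wcp}) is used only to establish strict positivity $v\ge\varepsilon\phi_1>0$, not the $\delta$-growth. So the ``principal technical obstacle'' you correctly identified is discharged by Lemma \ref{lm 6.4} applied to a bounded approximant rather than by a Hopf/barrier argument for $\underline u$ itself.
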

 \begin{proof}
    We establish the existence result as a consequence of Theorem \ref{thm5.1} by obtaining a {\it{veryweak subsolution}} and {\it{veryweak supersolution}} to the problem \eqref{eq5.1p<1} in the sense of Definition \ref{def5.2SSS}. We first proceed to obtain a {\it{veryweak subsolution}}. Consider the following problem with singularity,
    \begin{align}\label{eq5.10 WM}
   -\Delta v+(-\Delta)^s & v = \frac{f(x)}{v^\gamma} \,\, \text{in} \,\,\Omega,\\ 
  & v=0\,\, \text{in}\,\,\mathbb  R^N \setminus \Omega. \nonumber
  \end{align}
Proceeding with the steps as in \textbf{\textit{Claim 2}} of Theorem \ref{thm5.1} (see also Theorem \ref{thm3.1<1}), we guarantee the existence of a solution $v\in L^1(\Omega)$ to the problem \eqref{eq5.10 WM}. Let $\phi_1 >0$ be the principle Dirichlet eigenfunction of $(-\Delta +(-\Delta)^s)$ corresponding to the principle eigenvalue $\lambda_1>0$ in $\Omega$. Therefore, we have $\phi_1 |_{\mathbb R^N\setminus\Omega}=0$, $\phi_1>0$ and $\phi_1\in C_{loc}^{2,\alpha}(\Omega)\cap C^{1,\alpha}(\bar{\Omega})$ \cite[Theorem B.1]{SVWZ2023}. Since, $\phi_1>0$ and $v$ is a solution to \eqref{eq5.10 WM}, we choose  $\epsilon >0$, sufficiently small such that
  $$-\Delta (\epsilon \phi)+(- \Delta)^s (\epsilon \phi) -\frac{f}{(\epsilon \phi)^\gamma}<0 =  -\Delta v+(-\Delta)^s v - \frac{f}{v^\gamma}~\text{in the weak sense}.$$ Thus by the weak comparison principle, Lemma \ref{eq6.wcp}, we have $v\geq \epsilon\phi_1>0$ in $\Omega$. We now consider the following approximating problems, 
 \begin{align}\label{lll}
     -\Delta v_n + (-\Delta)^s v_n= \frac{f_n}{(v_n+\frac{1}{n})^\gamma}~ \text{in}~ \Omega, \\ 
      v_n=0 ~ \text{on} ~ \mathbb R^N \setminus \Omega, \nonumber
 \end{align}
 where $f_n=T_n(f)$. By Lemma \ref{lmm2.6}, we get the sequence $(v_n)$ is increasing and converges to a weak solution $v$ of the problem \eqref{eq5.10 WM} with $v_n\in L^\infty(\Omega)$ and $\frac{f_1}{(v_1+1)^\gamma}\in L^\infty(\Omega)$, (see also Theorem \ref{thm3.1<1}). Now on applying the inequality \eqref{Brezis inq}, we get 
 $$\frac{v_1(x)}{\delta(x)}\geq C \int_\Omega \frac{\delta(y)f_1(y)}{(v_1+1)^\gamma} \d y \geq  C \int_\Omega \frac{\delta(y)f_1(y)}{\|v_1\|_{L^\infty(\Omega)}+1} \d y\geq C>0, $$
 which implies
 $$v(x)\geq v_1(x)\geq C\delta(x) ~a.e ~\text{in}~ \Omega.$$
 Therefore, we have $0\leq fv^{-\gamma}\leq Cf\delta(x)^{-\gamma}$. Since, $f\in C^\beta(\bar{\Omega})$, we have $fv^{-\gamma}\in L^1(\Omega)$ for every $0<\gamma <1$. Thus, we obtain 
 $$\int_\Omega v(-\Delta)\phi+\int_\Omega v(-\Delta)^s\phi=\int_\Omega \frac{f\phi}{v^\gamma},\,\,\,\, \forall ~\phi \in C^2_0(\bar\Omega),$$
such that $v>0$ in $\Omega$ and $fv^{-\gamma}\in L^1(\Omega)$. Using the fact that $\mu$ is a nonnegative, bounded Radon measure, we get
  $$\int_\Omega v(-\Delta)\phi+\int_\Omega v(-\Delta)^s\phi\leq \int_\Omega \frac{f\phi}{v^\gamma}+\int_\Omega \phi d\mu,\,\,\,\, \forall~ \phi \in C^2_0(\bar\Omega),\, \phi\geq 0.$$
  Therefore, we conclude that $v$ is a {\it{veryweak subsolution}} to the problem \eqref{eq5.1p<1}.\\
  Next we establish the existence of a {\it{veryweak supersolution}} to the problem \eqref{eq5.1p<1}. Consider the following problem, 
   \begin{equation}\label{eq5.12M}
\begin{aligned}
   -\Delta z+(-\Delta)^s & z =\mu \,\, \text{in} \,\,\Omega,\\
  & z=0\,\, \text{in}\,\,\mathbb  R^N \setminus \Omega,
   \end{aligned}
\end{equation}
where $\mu$ is nonnegative, bounded Radon measure. Thus from Corollary \ref{RM5.1}, we get the existence of a nonnegative {\it{veryweak}} solution $z\in L^1(\Omega)$ to the problem \eqref{eq5.12M}. We set, $w=v+z$, where $v$ and $z$ are {\it{veryweak}} solutions to the problems \eqref{eq5.10 WM} and \eqref{eq5.12M}, respectively. Therefore, we get
\begin{align}\label{eq5.13}
    \int_\Omega w(-\Delta) \phi+\int_\Omega w(-\Delta)^s \phi  & = \int_\Omega v(-\Delta)\phi+ \int_\Omega v(-\Delta)^s\phi+\int_\Omega z(-\Delta)\phi+\int_\Omega z(-\Delta)^s\phi \nonumber \\ 
    & = \int_\Omega \frac{f\phi}{v^\gamma}+\int_\Omega \phi~ d\mu,~\forall\,\phi \in C_0^2(\bar \Omega).
\end{align}
Since, $z\geq0$, $f>0$ and $v>0$, we have $w>0$. Moreover, $fw^{-\gamma}\leq fv^{-\gamma}\in L^1(\Omega)$. Thus from \eqref{eq5.13}, we deduce
\begin{align}
    \int_\Omega\frac{f\phi}{w^\gamma}+\int_\Omega \phi~ d\mu & \leq  \int_\Omega \frac{f\phi}{v^\gamma}+\int_\Omega \phi~ d\mu \nonumber  = \int_\Omega w(-\Delta) \phi+\int_\Omega w(-\Delta)^s \phi,~\forall\, \phi\in C_0^2(\bar \Omega),~\phi \geq 0.
\end{align}
Therefore, we have produced a $w\in L^1(\Omega)$ such that $w>0$, $fw^{-\gamma}\in L^1(\Omega)$ and
$$ \int_\Omega w(-\Delta) \phi+\int_\Omega w(-\Delta)^s \phi \geq  \int_\Omega\frac{f\phi}{w^\gamma}+\int_\Omega \phi~ d\mu,~\forall\, \phi\in C_0^2(\bar \Omega),\,\phi \geq 0.$$
Hence, we conclude that $w$ is a {\it{veryweak supersolution}} to the problem $\eqref{eq5.1p<1}$ such that $w>v$. Therefore, Theorem \ref{thm5.1} asserts that there exists a  $\it{veryweak}$ solution $u\in L^1(\Omega)$ to the problem $\eqref{eq5.1p<1}$ in the sense of Definition \ref{def5.1 VW}. This completes the proof.
 \end{proof}  
In the next theorem, we let out the assumption on $f\in C^\beta(\bar \Omega)$, $0<\beta<1$ with $f\in L^1(\Omega)\cap L^\infty(\Omega_\eta)$ and prove the existence result, where  $\Omega_\eta=\{x\in \Omega:dist(x,\partial \Omega)<\eta\}$ for $\eta>0$ (fixed).
 \begin{theorem}\label{thm5.3}
     Let $f\in L^1(\Omega)\cap L^\infty(\Omega_\eta)$ such that $f>0$ {\it{a.e.}} in $\Omega$ for some fixed $\eta>0$. Then there exists a {\it{veryweak}} solution to the problem $\eqref{eq5.1p<1}$ in the sense of Definition $\ref{def5.1 VW}$.
 \end{theorem}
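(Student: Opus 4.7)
The plan is to mimic the proof of Theorem \ref{thm5.2} by producing a \emph{veryweak subsolution} and a \emph{veryweak supersolution} to \eqref{eq5.1p<1} and then invoking Theorem \ref{thm5.1}. First, I will consider the purely singular auxiliary problem \eqref{eq5.10 WM} and obtain a weak solution $v \in L^1(\Omega)$ by approximation, exactly as in the proof of Theorem \ref{thm5.2} (using the scheme \eqref{lll} with $f_n = T_n(f)$ and invoking Lemma \ref{lmm2.6} and Theorem \ref{thm3.1<1}); the hypothesis $f \in L^1(\Omega)$ is enough for this part since the arguments of Section \ref{sec3} only require integrability of $f$.

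Next, I want the Hopf-type lower bound $v(x) \geq C\,\delta(x)$ a.e.\ in $\Omega$. Reproducing the argument of Theorem \ref{thm5.2}, the first approximant $v_1 \in L^\infty(\Omega)$ satisfies $-\Delta v_1 + (-\Delta)^s v_1 = f_1/(v_1+1)^\gamma$, and since $f_1 = T_1(f)$ is bounded and nonnegative, the boundary-type inequality \eqref{Brezis inq} still applies to yield $v_1(x) \geq C\,\delta(x)$; monotonicity $v \geq v_1$ from Lemma \ref{lmm2.6} then transfers this bound to $v$. The main obstacle is now verifying that $f v^{-\gamma} \in L^1(\Omega)$, which is precisely where the weaker hypothesis on $f$ must be exploited carefully. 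I split the integral as
\begin{equation*}
\int_\Omega \frac{f}{v^\gamma}\,dx
= \int_{\Omega\setminus\Omega_\eta} \frac{f}{v^\gamma}\,dx + \int_{\Omega_\eta} \frac{f}{v^\gamma}\,dx.
\end{equation*}
On $\Omega\setminus\Omega_\eta$, which is compactly contained in $\Omega$, the strong positivity from Lemma \ref{lmm2.6} gives $v \geq C_\eta > 0$, hence $fv^{-\gamma} \leq C_\eta^{-\gamma} f \in L^1(\Omega\setminus\Omega_\eta)$ by $f \in L^1(\Omega)$. On the boundary strip $\Omega_\eta$, the bound $v \geq C\delta$ combined with $f \in L^\infty(\Omega_\eta)$ yields
\begin{equation*}
\int_{\Omega_\eta} \frac{f}{v^\gamma}\,dx
\leq C\,\|f\|_{L^\infty(\Omega_\eta)} \int_{\Omega_\eta} \frac{dx}{\delta(x)^\gamma},
\end{equation*}
and the last integral is finite because $0<\gamma<1$ and $\Omega$ has Lipschitz boundary. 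Together these estimates give $f v^{-\gamma} \in L^1(\Omega)$.

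With $fv^{-\gamma}\in L^1(\Omega)$ in hand, the same computation as in Theorem \ref{thm5.2} shows that $v$ satisfies, for every nonnegative $\phi \in C_0^2(\bar\Omega)$,
\begin{equation*}
\int_\Omega v(-\Delta)\phi + \int_\Omega v(-\Delta)^s\phi = \int_\Omega \frac{f\phi}{v^\gamma} \leq \int_\Omega \frac{f\phi}{v^\gamma} + \int_\Omega \phi\,d\mu,
\end{equation*}
so $v$ is a \emph{veryweak subsolution} of \eqref{eq5.1p<1} in the sense of Definition \ref{def5.2SSS}. Finally, let $z \in L^1(\Omega)$ be the nonnegative duality/veryweak solution of \eqref{eq5.12M} furnished by Corollary \ref{RM5.1}, and put $w := v+z \geq v > 0$. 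Then $f w^{-\gamma} \leq f v^{-\gamma} \in L^1(\Omega)$, and adding the two identities gives, for every nonnegative $\phi \in C_0^2(\bar\Omega)$,
\begin{equation*}
\int_\Omega w(-\Delta)\phi + \int_\Omega w(-\Delta)^s\phi
= \int_\Omega \frac{f\phi}{v^\gamma} + \int_\Omega \phi\,d\mu
\geq \int_\Omega \frac{f\phi}{w^\gamma} + \int_\Omega \phi\,d\mu,
\end{equation*}
so $w$ is a \emph{veryweak supersolution} with $v \leq w$. An application of Theorem \ref{thm5.1} then produces a \emph{veryweak} solution $u \in L^1(\Omega)$ with $v \leq u \leq w$, in the sense of Definition \ref{def5.1 VW}, completing the proof. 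The genuinely new ingredient compared to Theorem \ref{thm5.2} is only the two-region splitting to recover $fv^{-\gamma} \in L^1(\Omega)$ without H\"older regularity of $f$.
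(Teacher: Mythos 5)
Your proposal is correct and follows essentially the same route as the paper: build the subsolution $v$ via the approximation scheme \eqref{lll}, obtain the lower bound $v\geq C\delta$ from \eqref{Brezis inq}, conclude $fv^{-\gamma}\in L^1(\Omega)$, and then take $w=v+z$ as supersolution and invoke Theorem \ref{thm5.1}. The only difference is cosmetic: you make the integrability of $fv^{-\gamma}$ explicit through the two-region split over $\Omega_\eta$ and $\Omega\setminus\Omega_\eta$, whereas the paper states $Cf\delta^{-\gamma}\in L^1(\Omega)$ without spelling out that this uses $f\in L^\infty(\Omega_\eta)$ near the boundary and $f\in L^1(\Omega)$ (with $\delta$ bounded below) in the interior — your elaboration is exactly what is implicit there.
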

 \begin{proof}
     Let us consider the following approximating problems:
     \begin{equation}
    \begin{aligned}\label{eq5.15 WM}
    -\Delta v_n +(-\Delta)^s v_n&=\frac{f_n}{(v_n + \frac{1}{n})^{\gamma}}\,\,  \text{in} \,\,\Omega,\\
     v_n &=0 \,\, \,\text{in} \,\, \mathbb R^N \setminus \Omega,
   \end{aligned}
   \end{equation}
    where $f_n=T_n(f)$. Proceeding with similar arguments as in Theorem \ref{thm5.2}, we conclude that the sequence $(v_n)$ is increasing and converges to a weak solution to the problem \eqref{eq5.15 WM} with $v_n\in L^\infty(\Omega)$ and $\frac{f_1}{(v_1+1)^\gamma}\in L^\infty(\Omega)$. Therefore, applying the inequality \eqref{Brezis inq}, we get 
 $$\frac{v_1(x)}{\delta(x)}\geq C \int_\Omega \frac{\delta(y)f_1(y)}{(v_1+1)^\gamma} \d y \geq  C \int_\Omega \frac{\delta(y)f_1(y)}{\|v_1\|_{L^\infty(\Omega)}+1} \d y\geq C>0.$$
 Thus we have $$v(x)\geq v_1(x)\geq C\delta(x), a.e ~\text{on}~ \Omega.$$
 As $f\in L^\infty({\Omega_\eta})$, we conclude that $fv^{-\gamma}\leq Cf\delta(x)^{-\gamma}\in L^1(\Omega)$ for all $\gamma\in(0,1)$. Thus, the {\it{veryweak subsolution}} is bounded from below. This allows us to proceed as in the proof of Theorem \ref{thm5.2} to obtain the existence of a {\it{veryweak}} solution to the problem \eqref{eq5.1p<1}. This completes the proof.
 \end{proof}
 \iffalse
 \begin{remark}
     We mention here that if $f\in C^{\alpha}(\Omega)$ and $0<\gamma<1$, then every nonnegative weak solution $u$ of the problem \eqref{eq5.10 WM} is classical solution. Indeed, from Lemma 3.1,  with $\mu=0$, we have an increasing sequence of solutions $(u_n)$ to \eqref{lll} such that $u_n\in X_0^{s,2}(\Omega)\cap L^{\infty}(\Omega)$. Therefore, by the monotone convergence theorem and proceeding as Theorem , one can prove the existence of a weak solution $u\in X_0^{s,2}(\Omega)\cap L^{\infty}(\Omega)$ to the problem \eqref{eq5.10 WM}. Thus, the regularity $u\in C^{2,\alpha}(\Omega)$ follows from \cite[Theorem 1.4]{SVWZ2023}, which implies $u$ is a classical solution to \eqref{eq5.10 WM}.
 \end{remark}
 \fi
 \noindent \textit{Proof of Theorem \ref{t1.3}}: The proof follows from Theorem \ref{thm5.2} and Theorem \ref{thm5.3}.\\
 \section{Some important results}\label{sec6}
This section is devoted to establishing some important results like Kato-type inequality, weak comparison principle etc. which are essential to guarantee the existence of solutions. We begin with the following Kato-type inequality for the mixed operator $-\Delta +(-\Delta)^s$.
 
 %\subsection{\textbf{Kato-type inequality for mixed local and nonlocal operator}}
 \begin{theorem}[Kato-type inequality]\label{thm6.1}
 Let $f \in L^1(\Omega)$ and $u$ be a {\it{veryweak}} solution to the following problem. Then 
 \begin{equation}\label{eq6.1 K}
\begin{aligned}
   -\Delta u+(-\Delta)^s & u \leq f \,\, \text{in} \,\,\Omega,\\
  & u=0\,\, \text{in}\,\,\mathbb  R^N \setminus \Omega,
   \end{aligned}
\end{equation}
implies $$ (-\Delta +(-\Delta)^s) u^+ \leq (sign_+u)f,$$ in the {\it{veryweak}} sense, where  $sign_+u=\chi_{\{x\in \Omega:u(x) \geq 0\}}$, that is 
\begin{equation}\label{eq6.2}
    \int_\Omega u^+ ( (-\Delta +(-\Delta)^s))\psi \leq \int_\Omega (sign_+u) f \psi, ~ \forall~\psi \in C_0^2(\bar\Omega)~\text{such that}~ \psi \geq 0.
\end{equation}
 \end{theorem}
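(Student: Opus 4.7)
The plan is to establish the Kato-type inequality \eqref{eq6.2} via a double approximation: regularize the positive-part map $t \mapsto t^+$ by smooth convex functions, and simultaneously approximate the low-regularity veryweak solution $u$ by classical smooth solutions obtained from regularized data. For smooth approximants the inequality will reduce to a pointwise Kato estimate, which I will prove separately for the local and nonlocal parts of $\mathcal{L}$ using convexity; I then pass to the limit in the dual (distributional) formulation tested against $\psi$.

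Concretely, I would choose a family of nondecreasing convex functions $\Phi_\epsilon \in C^2(\mathbb{R})$ approximating $t \mapsto t^+$ with $0 \leq \Phi_\epsilon' \leq 1$, $\Phi_\epsilon'' \geq 0$, $\Phi_\epsilon(t) \nearrow t^+$, and $\Phi_\epsilon'(t) \to \chi_{\{t \geq 0\}}$ pointwise as $\epsilon \to 0^+$. Reading the veryweak inequality as $\mathcal{L}u = g$ with $g \in L^1(\Omega)$ satisfying $g \leq f$ (the form in which the theorem is actually applied inside the proof of Theorem \ref{thm5.1}), I would mollify the right-hand side into $g_n \in C^\infty_c(\Omega)$ with $g_n \to g$ in $L^1(\Omega)$, and invoke the regularity theory of Section \ref{sec4} (Theorem \ref{thm4.2} and Remark \ref{RM4.2}) to obtain classical solutions $u_n \in C^{2,\alpha}_{loc}(\Omega)\cap C(\overline{\Omega})$ of $\mathcal{L}u_n = g_n$ with $u_n \equiv 0$ on $\mathbb{R}^N \setminus \Omega$, satisfying $u_n \to u$ in $L^1(\Omega)$.

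For smooth $u_n$ and fixed $\epsilon$, the pointwise estimate $\mathcal{L}\Phi_\epsilon(u_n) \leq \Phi_\epsilon'(u_n)\,g_n$ on $\Omega$ follows from two convexity arguments. The local part is the chain-rule computation
\begin{align*}
-\Delta \Phi_\epsilon(u_n) = -\Phi_\epsilon''(u_n)|\nabla u_n|^2 + \Phi_\epsilon'(u_n)(-\Delta u_n) \leq \Phi_\epsilon'(u_n)(-\Delta u_n),
\end{align*}
which uses $\Phi_\epsilon'' \geq 0$. For the nonlocal part, the convex subgradient inequality $\Phi_\epsilon(b) \geq \Phi_\epsilon(a) + \Phi_\epsilon'(a)(b-a)$ applied to $a=u_n(x)$, $b=u_n(y)$ yields
\begin{align*}
(-\Delta)^s\Phi_\epsilon(u_n)(x) - \Phi_\epsilon'(u_n(x))(-\Delta)^s u_n(x) = C_{N,s}\int_{\mathbb{R}^N}\frac{\Phi_\epsilon(u_n(x))-\Phi_\epsilon(u_n(y))-\Phi_\epsilon'(u_n(x))(u_n(x)-u_n(y))}{|x-y|^{N+2s}}\,dy \leq 0.
\end{align*}
Adding the two estimates and substituting $\mathcal{L}u_n = g_n$ gives the pointwise bound. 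Integrating against any $0 \leq \psi \in C^2_0(\overline{\Omega})$ and transferring $\mathcal{L}$ onto $\psi$ by integration by parts (legitimate because $u_n$ is smooth with trivial exterior extension) produces the tested inequality $\int_\Omega \Phi_\epsilon(u_n)\,\mathcal{L}\psi\,dx \leq \int_\Omega \Phi_\epsilon'(u_n)\,g_n\,\psi\,dx$.

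Finally I would send $n \to \infty$: from $u_n \to u$ in $L^1$ (and a.e.\ along a subsequence), the uniform Lipschitz bound on $\Phi_\epsilon$, boundedness of $\Phi_\epsilon'$, and $g_n \to g$ in $L^1$, dominated convergence gives $\int_\Omega \Phi_\epsilon(u)\,\mathcal{L}\psi\,dx \leq \int_\Omega \Phi_\epsilon'(u)\,g\,\psi\,dx$. Letting $\epsilon \to 0^+$, monotone convergence on the left ($\Phi_\epsilon(u) \nearrow u^+$) and dominated convergence on the right ($\Phi_\epsilon'(u) \to \mathrm{sign}_+ u$ pointwise a.e.\ and $|\Phi_\epsilon'| \leq 1$) deliver
\begin{align*}
\int_\Omega u^+\,\mathcal{L}\psi\,dx \leq \int_\Omega (\mathrm{sign}_+ u)\,g\,\psi\,dx \leq \int_\Omega (\mathrm{sign}_+ u)\,f\,\psi\,dx,
\end{align*}
the last inequality using $g \leq f$ and $(\mathrm{sign}_+ u)\psi \geq 0$, which is precisely \eqref{eq6.2}. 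The main obstacle I anticipate is justifying the pointwise split of the nonlocal piece: both $(-\Delta)^s \Phi_\epsilon(u_n)(x)$ and $\Phi_\epsilon'(u_n(x))(-\Delta)^s u_n(x)$ must be defined separately so that their difference can be written as the single convex integrand above. This rests on the $C^{2,\alpha}_{loc}(\Omega)$ regularity of $u_n$ together with the decay at infinity guaranteed by the zero exterior extension, which makes the principal-value integrals absolutely convergent. A secondary subtlety is the convergence $\Phi_\epsilon'(u) \to \mathrm{sign}_+ u$ on the set $\{u=0\}$, which is harmless here because the right-hand-side inequality is preserved in the limit by the uniform $L^\infty$ bound on $\Phi_\epsilon'$ and the sign of $(\mathrm{sign}_+ u)f\psi$ relative to $\Phi_\epsilon'(u)\,g\,\psi$.
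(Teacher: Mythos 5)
Your proposal is correct, and it takes the same core route as the paper — regularize a nonsmooth convex function of $u$ and use convexity to split $\mathcal{L}$ across the composition — but you tighten two places where the paper's argument is loose.

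First, the paper works with $\phi_\epsilon(s)=(s^2+\epsilon^2)^{1/2}-\epsilon\to|s|$, obtains $\int_\Omega |u|\,\mathcal{L}\psi\leq\int_\Omega \operatorname{sign}(u)f\psi$ in the limit, and then asserts the target estimate \eqref{eq6.2} ``using $u^+\leq|u|$ and $\operatorname{sign}(u)\leq\operatorname{sign}_+(u)$.'' That last step is not a valid pointwise implication, since $\mathcal{L}\psi$ has no sign and $f$ is merely $L^1$; one must instead average the $|u|$-estimate with the original veryweak inequality for $u$ itself and then still assume $f\geq 0$. By choosing $\Phi_\epsilon$ to approximate $t\mapsto t^+$ directly (convex, $0\leq\Phi_\epsilon'\leq1$), you land on the $u^+/\operatorname{sign}_+$ version in one shot and avoid that gap entirely; the final comparison $g\leq f$ then only needs $(\operatorname{sign}_+u)\psi\geq 0$, which is automatic.

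Second, the paper's computation \eqref{eq6.3}--\eqref{eq6.4} is performed ``for every $u\in C_0^2(\bar\Omega)$'' and, crucially, uses $\psi\,\phi_\epsilon'(u)$ as a test function in the veryweak formulation — a function that is only admissible if $u$ itself is $C^2$. Since the hypotheses give only $u\in L^1$, this is a genuine regularity gap in the paper's proof. Your double approximation (mollify the datum $g$, solve $\mathcal{L}u_n=g_n$ with classical regularity from Theorem \ref{thm4.2}/Remark \ref{RM4.2}, pass to the limit using $L^1$-stability from the duality characterization of Section \ref{sec4}) fills exactly that hole, and your discussion of why the principal-value split of $(-\Delta)^s\Phi_\epsilon(u_n)$ requires $u_n\in C^{2,\alpha}_{loc}$ with compact support is the right technical point to flag. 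Two minor remarks: on the left-hand side of your $\epsilon\to 0$ limit you should invoke dominated convergence (dominant $|u|\in L^1$) rather than monotone convergence, since $\mathcal{L}\psi$ is sign-indefinite; and your reading of the hypothesis as $\mathcal{L}u=g$ with $g\in L^1$, $g\leq f$ is exactly how the theorem is applied in Theorem \ref{thm5.1}, so nothing is lost. On balance your version is more careful than the one in the paper while following the same underlying mechanism.
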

 \begin{proof}
     Let $\epsilon \in (0,1)$. Consider the function $\phi_\epsilon(s)=(s^2+\epsilon^2)^\frac{1}{2} - \epsilon$. Observe that for each $\epsilon>0$, $\phi_\epsilon(s)$ is convex, Lipschitz function such that $\phi_\epsilon(0)=0$, $\phi'_\epsilon(0)=0$ and $\phi'_\epsilon(s)$ is bounded. Moreover, as $\epsilon \rightarrow 0$, $\phi_\epsilon(s) \rightarrow |s|$ uniformly and $\phi'_\epsilon(s)\rightarrow \operatorname{sign}(s)$, where $\operatorname{sign}(s)=\begin{cases}
          1, ~ &\text{if} ~ s >0,\\
        -1, ~ &\text{if} ~ s <0,\\
          0, ~ &\text{if} ~ s =0.
     \end{cases}$\\
     For every $u\in C_0^2(\bar\Omega)$, we have
    \begin{align}\label{eq6.3}
         (-\Delta)^s \phi_\epsilon(u(x)) & = C \int_\Omega \frac{(\phi_\epsilon(u(x)) -\phi_\epsilon(u(y)))}{|x-y|^{N+2s}} \d y \nonumber \\
         & \leq C \int_\Omega \phi'_\epsilon(u(x)) \frac{(u(x)-u(y))}{|x-y|^{N+2s}} \d y \nonumber \\
         & = \phi'_\epsilon(u(x)) (-\Delta)^su(x).
    \end{align}
  Let $\psi \in C_0^2(\bar\Omega)$ such that $ \psi \geq 0$. Thus using \eqref{eq6.3} and the integration by parts, we get
   \begin{align}\label{eq6.4}
       \int_\Omega &\phi_\epsilon(u(x))((-\Delta)+(-\Delta)^s)\psi(x) =\int_\Omega \phi_\epsilon(u(x)) (-\Delta)\psi(x) +\int_\Omega \phi_\epsilon(u(x)) (-\Delta)^s)\psi(x)  \nonumber \\ 
       %&=\int_\Omega \nabla (\phi_\epsilon(u(x))).\nabla \psi(x) + \int_\Omega(-\Delta)^\frac{s}{2}(\phi_\epsilon(u(x))).(-\Delta)^\frac{s}{2}\psi(x) \nonumber \\ 
       =&\int_\Omega \psi(x) (-\Delta (\phi_\epsilon(u(x))))+\int_\Omega \psi(x) (-\Delta)^s(\phi_\epsilon(u(x))) \nonumber\\ 
       =&-\int_\Omega \psi(x)\phi''_\epsilon(u(x))|\nabla u(x)|^2  + \int_\Omega\psi(x) \phi'_\epsilon(u(x))(-\Delta)u(x) +\int_\Omega\psi(x)(-\Delta)^s(\phi_\epsilon(u(x))) \nonumber \\ 
       &\leq \int_\Omega \psi(x)\phi'_\epsilon(u(x))(-\Delta)u(x)  +\int_\Omega \psi(x)\phi'_\epsilon(u(x)) (-\Delta)^su(x) ~ \nonumber \\
       =&\int_\Omega \psi (x)\phi'_\epsilon(u(x)) ((-\Delta)+(-\Delta)^s)u(x)=\int_\Omega u(x) ((-\Delta)+(-\Delta)^s)(\psi (x)\phi'_\epsilon(u(x))) \nonumber\\
       &\leq \int_\Omega f(x) \psi (x)\phi'_\epsilon(u(x)).
   \end{align}
   Therefore, passing the limit as $\epsilon \rightarrow 0$ in \eqref{eq6.4}, we obtain 
   
   \begin{align}\label{eq6.5k}
       \int_\Omega |u|((-\Delta)+(-\Delta)^s)\psi(x) \d x & \leq \int_\Omega sign(u) f(x) \psi(x)\d x.
   \end{align}
   Hence, using $u^+ \leq |u|$ and $sign(u) \leq sign_+(u)$ in \eqref{eq6.5k}, we get
   \begin{equation}
    \int_\Omega u^+ ( (-\Delta +(-\Delta)^s))\psi \leq \int_\Omega (sign_+u) f \psi, ~ \forall\, \psi\in C_0^2(\bar\Omega)~\text{such that}~ \psi \geq 0.\nonumber
\end{equation}
This completes the proof.
 \end{proof}
 Now using Theorem \ref{thm6.1}, we obtain the following Kato-type inequality for our problem \eqref{eq5.1p<1}.
 \begin{lemma}
 Let $u$ be a $\it{veryweak}$ solution and $\bar u$ be a {\it{veryweak supersolution}} to the problem \eqref{eq5.1p<1}. Then for $f>0$, we have
 $$\int_\Omega(u- \bar u)^+  \leq \int_\Omega \bigg(\frac{1}{u_1^\gamma} -\frac{1}{\bar u^\gamma}\bigg)(sign_+(u-\bar u))f\phi_0,$$
where $\phi_0\in C_0^2(\bar \Omega)$ such that $((-\Delta)+(-\Delta)^s)\phi_0=1$ in $\Omega$ and $\phi=0$ in $\mathbb R^N \setminus \Omega.$
\end{lemma}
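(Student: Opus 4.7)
The plan is to subtract the two variational formulations for $u$ and $\bar u$, reduce the problem to an inequality of the form $(-\Delta+(-\Delta)^s)w \leq g$ in the veryweak sense, and then invoke the Kato-type inequality of Theorem \ref{thm6.1} with a carefully chosen test function. We read $u_1$ in the statement as a typo for $u$.

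First, let $\phi\in C_0^2(\bar\Omega)$ with $\phi\geq 0$. Writing the veryweak formulation for $u$ (Definition \ref{def5.1 VW}) and the veryweak supersolution inequality for $\bar u$ (Definition \ref{def5.2SSS}) and subtracting, the measure terms cancel and we obtain
\begin{equation*}
\int_\Omega (u-\bar u)\bigl((-\Delta)+(-\Delta)^s\bigr)\phi \;\leq\; \int_\Omega f\left(\frac{1}{u^\gamma}-\frac{1}{\bar u^\gamma}\right)\phi.
\end{equation*}
Since $u$ and $\bar u$ are positive on $\Omega$ and $fu^{-\gamma},\,f\bar u^{-\gamma}\in L^1(\Omega)$ by the definitions of veryweak (super)solution, the right-hand side defines an $L^1$ datum; set
\begin{equation*}
F := f\left(\frac{1}{u^\gamma}-\frac{1}{\bar u^\gamma}\right)\in L^1(\Omega).
\end{equation*}
Thus $w:=u-\bar u$ satisfies $-\Delta w+(-\Delta)^s w\leq F$ in the veryweak sense of \eqref{eq6.1 K}.

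Next I would apply Theorem \ref{thm6.1} to $w$ with datum $F$. This gives the Kato-type inequality
\begin{equation*}
\int_\Omega (u-\bar u)^+ \bigl((-\Delta)+(-\Delta)^s\bigr)\psi \;\leq\; \int_\Omega (sign_+(u-\bar u))\,F\,\psi
\end{equation*}
for every $\psi\in C_0^2(\bar\Omega)$ with $\psi\geq 0$. The conclusion is then obtained by testing with the specific function $\phi_0\in C_0^2(\bar\Omega)$ that solves
\begin{equation*}
\bigl((-\Delta)+(-\Delta)^s\bigr)\phi_0 = 1 \text{ in } \Omega, \qquad \phi_0 = 0 \text{ in } \mathbb R^N\setminus\Omega.
\end{equation*}
The existence and $C_0^2(\bar\Omega)$-regularity of $\phi_0$ follow from the results invoked in Section \ref{sec4} (Theorem \ref{thm4.2} together with Remark \ref{RM4.2} and the global regularity of \cite{SVWZ2023}); note $\phi_0\geq 0$ by the maximum principle of \cite[Theorem 1.2]{BDVV2022}. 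With this choice, the left-hand side collapses to $\int_\Omega (u-\bar u)^+$ and the right-hand side is exactly the claimed expression.

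The only technical obstacle is checking that $\phi_0$ is an admissible test function simultaneously in the veryweak formulations of $u$ and $\bar u$ and in the Kato inequality \eqref{eq6.2}; this is why the regularity $\phi_0\in C^2_{loc}(\Omega)\cap C^{1,\alpha}(\bar\Omega)$, which in particular yields $\phi_0\in C_0^2(\bar\Omega)$, is needed. Everything else is a direct combination of the two variational inequalities with Theorem \ref{thm6.1}.
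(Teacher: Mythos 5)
Your proposal is correct and follows essentially the same route as the paper: subtract the supersolution inequality \eqref{eq5.4sup} from the veryweak formulation \eqref{eq5.2 Vw} (the measure terms cancel), recognize the result as the veryweak formulation of $-\Delta(u-\bar u)+(-\Delta)^s(u-\bar u)\leq f(u^{-\gamma}-\bar u^{-\gamma})$, apply the Kato-type inequality of Theorem \ref{thm6.1}, and then test with the function $\phi_0$ satisfying $((-\Delta)+(-\Delta)^s)\phi_0=1$. Your identification of $u_1$ as a typo for $u$ is correct, and the extra remarks on $F\in L^1(\Omega)$, the existence and $C_0^2$-regularity of $\phi_0$, and $\phi_0\geq 0$ via the maximum principle are all welcome additions that the paper leaves implicit.
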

\begin{proof}
Recall Definition \ref{def5.1 VW} and Definition \ref{def5.2SSS} for {\it{veryweak}} solution and {\it{veryweak supersolution}} to the problem \eqref{eq5.1p<1}, respectively. Thus on subtracting \eqref{eq5.4sup} from \eqref{eq5.2 Vw}, we get
\begin{equation}\label{eq6.9}
\int_\Omega(u- \bar u) (-\Delta)\phi+\int_\Omega (u-\bar u) (-\Delta)^s\phi \leq \int_\Omega 
\bigg(\frac{1}{u^\gamma} -\frac{1}{\bar u^\gamma}\bigg)f\phi,~\forall~\phi\in C_0^2(\bar \Omega),\,\phi\geq0.
\end{equation}
Note that the equation \eqref{eq6.9} is the $\it{veryweak}$ formulation to the following problem:
\begin{align}\label{eq6.10}
-\Delta (u-\bar u)+(-\Delta)^s & (u-\bar u) \leq \bigg( \frac{1}{u^\gamma}-\frac{1}{\bar u^\gamma}\bigg)f \,\, \text{in} \,\,\Omega,\\ 
  & (u-\bar u) =0\,\, \text{in}\,\,\mathbb  R^N \setminus \Omega.\nonumber
  \end{align}
Therefore, using the Kato-type inequality \eqref{eq6.2} in Theorem \ref{thm6.1} to the problem \eqref{eq6.10}, we deduce that
\begin{equation}\label{eq6.11}
    \int_\Omega(u- \bar u)^+ ((-\Delta)+(-\Delta)^s)\phi \leq \int_\Omega \bigg(\frac{1}{u^\gamma} -\frac{1}{\bar u^\gamma}\bigg)(sign_+(u-\bar u))f\phi,
    \end{equation}
where $sign_+(u-\bar u)=\chi_{\{x\in \Omega:u(x) \geq \bar u(x)\}}$. Choosing $\phi_0\in C_0^2(\bar \Omega)$ such that $((-\Delta)+(-\Delta)^s)\phi_0=1$ in $\Omega$ and $\phi=0$ on $\mathbb R^N \setminus \Omega$ together with \eqref{eq6.11}, we obtain
\begin{align}\label{keto inq 2}
\int_\Omega(u- \bar u)^+  \leq \int_\Omega \bigg(\frac{1}{u^\gamma} -\frac{1}{\bar u^\gamma}\bigg)(sign_+(u-\bar u))f\phi_0,
\end{align}
which is the required Kato-type inequality. This completes the proof.
\end{proof}
\noindent We now prove a maximum principle for the mixed operator $-\Delta + (-\Delta)^s$ followed by the Lemma \ref{lm 6.4} which ensures the integrability of a solution for obtaining the {\it{veryweak}} solution. We proceed with defining the notion of {\it{mixed-superharmonic}} function.
\begin{definition}\label{def6.1}
We say $v$ is a ``{\it{mixed-superharmonic}}" to the following  Dirichlet problem \eqref{f last eq},
     \begin{equation}\label{f last eq}
        \begin{aligned}
            -\Delta v+ (-\Delta)^s v & \geq 0 ~ \,\, \text{in}~\,\, \Omega,\\
            v & = 0 ~ \, \text{in}~\,\, \mathbb R^N \setminus \Omega,
        \end{aligned}
    \end{equation}
      if $v=0$ in $\mathbb R^N\setminus\Omega$ and $v$ satisfies 
    \begin{equation}
        \int_\Omega v (-\Delta) \phi +\int_\Omega v (-\Delta)^s \phi \geq 0, ~ \,\, \forall~\phi\in C_0^2(\bar\Omega)~\text{such that}~\phi\geq 0.
    \end{equation}
    Similarly, one can define {\it{mixed-subharmonic}} functions. A function $v$ is said to be {\it{mixed-harmonic}} if it is {\it{mixed-subharmonic}} and {\it{mixed-superharmonic}}.
\end{definition}

\begin{lemma}[Maximum principle]\label{eq6.15A}
    If $v\in L^1(\Omega)$ be a $\it{veryweak}$ solution to the following
     problem,
     \begin{align}\label{eq6.14A}
          -\Delta v+ (-\Delta)^s v & \geq 0 ~ \,\, \text{in}~\,\, \Omega,\\
            v & = 0 ~ \, \text{in}\,\,~ \mathbb R^N \setminus \Omega, \nonumber
     \end{align}
     then we have $v\geq0$ {\it{a.e.}} in $\Omega$.
     \end{lemma}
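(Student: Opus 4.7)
The natural plan is to reduce the question to showing $\int_\Omega v^- = 0$, and to do this by applying the Kato-type inequality of Theorem \ref{thm6.1} to $-v$ together with the torsion-type dual test function $\phi_0$ already used in the proof of Theorem \ref{thm5.1}.

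First, I would set $w := -v \in L^1(\Omega)$, note that $w = 0$ in $\mathbb{R}^N \setminus \Omega$, and observe that the veryweak inequality \eqref{eq6.14A} satisfied by $v$ translates into the veryweak inequality $-\Delta w + (-\Delta)^s w \leq 0$ in $\Omega$. Thus $w$ is a veryweak solution to the problem \eqref{eq6.1 K} with $f \equiv 0$. Applying Theorem \ref{thm6.1} to $w$ gives the veryweak Kato inequality
$$\int_\Omega w^+\bigl((-\Delta)+(-\Delta)^s\bigr)\psi \leq \int_\Omega (\mathrm{sign}_+ w)\cdot 0 \cdot \psi = 0,$$
for every nonnegative $\psi \in C_0^2(\bar{\Omega})$. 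Since $w^+ = v^-$, this yields
$$\int_\Omega v^-\bigl((-\Delta)+(-\Delta)^s\bigr)\psi \leq 0,\qquad \forall\,\psi \in C_0^2(\bar{\Omega}),\ \psi \geq 0.$$

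Next, I would insert the canonical test function $\phi_0 \in C_0^2(\bar{\Omega})$ that solves $(-\Delta)\phi_0 + (-\Delta)^s\phi_0 = 1$ in $\Omega$ with $\phi_0 = 0$ in $\mathbb{R}^N \setminus \Omega$. Its existence and regularity are guaranteed by Theorem \ref{thm4.2} applied with $f \equiv 1 \in L^p(\Omega)$ for a suitable $p$, and its nonnegativity by the weak maximum principle of Biagi \textit{et al.}~\cite{BDVV2022}. Plugging $\psi = \phi_0$ into the displayed inequality produces
$$\int_\Omega v^- \leq 0,$$
and since $v^- \geq 0$ a.e., we immediately conclude $v^- \equiv 0$, i.e., $v \geq 0$ a.e.~in $\Omega$.

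The main obstacle is a minor regularity issue: the function $\phi_0$ furnished by Theorem \ref{thm4.2} and the regularity results of \cite{SVWZ2023} lies in $C^{2,\alpha}_{\mathrm{loc}}(\Omega) \cap C^{1,\alpha}(\bar{\Omega})$, which is slightly weaker than strict membership in $C_0^2(\bar{\Omega})$ required by Definition \ref{def5.1 VW}. To handle this I would use a standard approximation argument: select smooth cutoffs $\eta_k \in C_c^\infty(\Omega)$ with $0\leq\eta_k\leq 1$ and $\eta_k \uparrow 1$ on $\Omega$, mollify the product $\eta_k \phi_0$ to obtain $\phi_0^{(k)} \in C_c^\infty(\Omega) \subset C_0^2(\bar\Omega)$, and verify that $((-\Delta)+(-\Delta)^s)\phi_0^{(k)} \to 1$ in $L^\infty_{\mathrm{loc}}$ with a uniform $L^\infty(\Omega)$ bound. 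Since $v^- \in L^1(\Omega)$, the dominated convergence theorem lets us pass to the limit and recover $\int_\Omega v^- \leq 0$, closing the argument.
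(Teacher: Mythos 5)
Your argument is correct and reaches the stated conclusion, but it follows a genuinely different route from the paper's. The paper proves the maximum principle by a direct duality argument: for an arbitrary nonnegative $f\in C^\infty(\bar\Omega)$ it takes the nonnegative solution $w\in C_0^2(\bar\Omega)$ of $(-\Delta+(-\Delta)^s)w=f$ as the test function in the veryweak inequality for $v$, obtaining $\int_\Omega fv\geq 0$, and then approximates $\chi_{\{v<0\}}$ by bounded smooth nonnegative $f_n$ and passes to the limit by dominated convergence to force $\int_{\{v<0\}}v=0$. You instead apply the Kato-type inequality of Theorem \ref{thm6.1} to $w=-v$ with $f\equiv 0$, which gives $\int_\Omega v^-((-\Delta)+(-\Delta)^s)\psi\leq 0$ for all admissible $\psi\geq 0$, and then specialize to the single torsion function $\phi_0$ solving $((-\Delta)+(-\Delta)^s)\phi_0=1$. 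Both routes hinge on precisely the same regularity fact --- that solutions of the auxiliary mixed Dirichlet problem with smooth data belong to $C_0^2(\bar\Omega)$ --- the paper using a whole family $w_f$, you using a single instance $\phi_0$. The paper's argument is more elementary in that it never invokes the $\phi_\epsilon$-machinery behind Theorem \ref{thm6.1}; yours is shorter once the Kato inequality is accepted as a black box, and the two tools are logically independent (the paper's proof of Theorem \ref{thm6.1} does not use the maximum principle), so there is no circularity. You are right to flag the $C_0^2(\bar\Omega)$-membership of $\phi_0$ as nontrivial; the regularity the paper actually quotes from \cite{SVWZ2023} is only $C^{2,\alpha}_{\mathrm{loc}}(\Omega)\cap C^{1,\alpha}(\bar\Omega)$, and this same gap appears (uncommented) in the paper's own proof of Theorem \ref{thm5.1} and again in its proof of the present lemma. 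One caution about your proposed fix: mollifying $\eta_k\phi_0$ and asking for a uniform $L^\infty(\Omega)$ bound on $((-\Delta)+(-\Delta)^s)\phi_0^{(k)}$ is delicate in the boundary layer, where $\Delta\phi_0$ need not be bounded if $\phi_0\notin C^2(\bar\Omega)$; a cleaner variant within your own framework is to approximate the datum $1$ by $g_n\in C_c^\infty(\Omega)$, take the corresponding $C_0^2$ solutions $\psi_n\geq 0$ (now genuinely classical, since the data are compactly supported), apply Kato with $\psi=\psi_n$ to get $\int_\Omega v^-g_n\leq 0$, and pass to the limit in $n$ --- which is in effect what the paper does with its family $w_{f_n}$.
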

     \begin{proof}
         The $\it{veryweak}$ formulation of the problem \eqref{eq6.14A} is given by
         \begin{equation}\label{eq6.18}
        \int_\Omega v (-\Delta) \phi +\int_\Omega v (-\Delta)^s \phi \geq 0, ~ \,\, \forall~\phi\in C_0^2(\bar\Omega)~\text{such that}~\phi\geq 0.
    \end{equation}
    Let $w\in C_0^2(\bar\Omega)$ be a solution to the following problem:
    \begin{equation}\label{eq6.19}
        \begin{aligned}
            -\Delta w+ (-\Delta)^s w & =f ~ \,\, \text{in}~\,\, \Omega,\\
            w & = 0 ~ \, \text{in}~\,\, \mathbb R^N \setminus \Omega,
            \end{aligned}
            \end{equation}
            for every $f\in C^\infty(\bar\Omega)$, (refer to \cite[Theorem 1.4]{SVWZ2023}).
            Now if $f\geq 0$, then whenever $w\geq 0$, we have $w$ is {\it{mixed-subharmonic}} to \eqref{eq6.19}. Again, using \eqref{eq6.19}, we deduce
             \begin{equation}\label{eq6.19A}
        \int_\Omega w (-\Delta) \phi +\int_\Omega w (-\Delta)^s \phi = \int_\Omega f\phi, ~ \,\, \forall~\phi\in C_0^2(\bar\Omega)~\text{with}~\phi\geq 0.
    \end{equation}
   Thus choosing $\phi=v$ as the test function and using the integration by parts, we obtain
    \begin{equation}\label{eq6.21A}
        \int_\Omega v (-\Delta) w +\int_\Omega v (-\Delta)^s w = \int_\Omega f v.
    \end{equation}
    Therefore, using $\eqref{eq6.18}$, we get
    \begin{equation}\label{eq6.21}
        \int_\Omega f v \geq 0, ~ \forall~ f\in C^\infty(\bar\Omega)~ \text{such that}~f\geq 0.
    \end{equation}
    Hence, $v\geq0$ {\it{a.e.}} in $\Omega$. Indeed, consider a bounded, nonnegative sequence $(f_n)\subset C^\infty(\bar\Omega)$ such that $f_n\rightarrow\chi_{\{v<0\}}$ {\it{a.e.}} in $\Omega$. On using the Lebesgue dominated convergence theorem in \eqref{eq6.21}, we get $$0\leq \int_{\{v<0\}} v \leq 0.$$
    Thus, $\int_{\{v<0\}} v=0,$ completing the proof.
     \end{proof}
     
\begin{lemma}\label{lm 6.4}
    Let $0\leq h \in L^\infty(\Omega)$ and let $v$ be a $\it{veryweak}$ solution to the following problem,
    \begin{equation}\label{last eq}
        \begin{aligned}
            -\Delta v+ (-\Delta)^s v & =h ~ \,\, \text{in}~\,\, \Omega,\\
            v & = 0 ~ \, \text{in}~\,\, \mathbb R^N \setminus \Omega.
        \end{aligned}
    \end{equation}
  Then there exists a $C>0$ such that
  \begin{equation}\label{Brezis inq}
      \frac{v(x)}{\delta(x)}\geq C \int_\Omega h(y)\delta(y)\d y~\,\, \forall~ x\in \Omega,\end{equation}
  where $\delta(x)=\operatorname{dist}(x,\partial \Omega).$ 
\end{lemma}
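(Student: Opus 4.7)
The strategy is a Green-function representation argument in the spirit of Brezis--Cabré. Since $h \in L^\infty(\Omega) \subset L^p(\Omega)$ for every $p < \infty$, Theorem \ref{thm4.2} together with Remark \ref{RM4.2} yields $v \in C^{1,\alpha}(\bar{\Omega})$, and Lemma \ref{eq6.15A} gives $v \geq 0$ in $\Omega$. If $h \equiv 0$ the statement is trivial, so we may assume $h \not\equiv 0$. The first step is to introduce the Green function $G$ of $\mathcal{L} = -\Delta + (-\Delta)^s$ on $\Omega$ with zero exterior Dirichlet condition: for each $y \in \Omega$, $G(\cdot, y)$ solves $\mathcal{L}\,G(\cdot, y) = \delta_y$ in $\Omega$, $G(\cdot, y) = 0$ in $\mathbb{R}^N \setminus \Omega$, obtained, for instance, as an $L^q_{\mathrm{loc}}$-limit (for suitable $q$) of solutions to $\mathcal{L} u_\varepsilon = |B(y,\varepsilon)|^{-1}\chi_{B(y,\varepsilon)}$ as $\varepsilon \to 0$, whose existence and uniform estimates are guaranteed by Theorem \ref{thm4.2}. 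Self-adjointness of $\mathcal{L}$ gives the symmetry $G(x,y) = G(y,x)$, and testing $\mathcal{L} v = h$ against $G(\cdot, x)$ (justified by smooth approximation) provides
\[
v(x) = \int_\Omega G(x, y)\, h(y)\, dy, \qquad x \in \Omega.
\]

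The heart of the argument is the bilateral boundary lower bound
\[
G(x, y) \geq C_0\, \delta(x)\, \delta(y), \qquad \forall\, x, y \in \Omega,
\]
for a constant $C_0 > 0$ depending only on $\Omega$, $N$, $s$. To establish it, fix $y_0 \in \Omega$ and set $u_{y_0}(x) := G(x, y_0)$. Then $u_{y_0}$ is nonnegative, vanishes in $\mathbb{R}^N \setminus \Omega$, and is mixed-harmonic on $\Omega \setminus \{y_0\}$. The Hopf-type boundary estimate for $\mathcal{L}$ (Theorem B.1 of \cite{SVWZ2023}) combined with an interior Harnack inequality yields $u_{y_0}(x) \geq c(y_0)\, \delta(x)$ throughout $\Omega$. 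The correct $\delta(y_0)$-dependence of $c(y_0)$ is then recovered by running the same Hopf argument in the $y$-variable for $x$ near $\partial\Omega$, invoking the symmetry $G(x,y) = G(y,x)$, and reconciling the two normalizations via a boundary Harnack principle for the mixed operator. Substituting the estimate into the representation gives
\[
v(x) \geq C_0\, \delta(x) \int_\Omega h(y)\, \delta(y)\, dy,
\]
which is the claim.

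The principal technical obstacle is precisely the bilateral $\delta(x)\delta(y)$ scaling of $G$. The Hopf lemma alone controls $G(\cdot, y_0)$ as a function of $x$ only up to a constant $c(y_0)$ whose $\delta(y_0)$-behaviour is not a priori explicit; producing one constant $C_0$ that works uniformly in both arguments requires a quantitative boundary Harnack (or Martin-boundary) argument for $\mathcal{L}$, which is the true technical heart of the proof. The approximation step needed to define $G$ and to justify the integral representation against a measure-valued right-hand side is routine given the $L^p$-theory of Theorem \ref{thm4.2} and the maximum principle Lemma \ref{eq6.15A}, so once the Green-function bound is in hand the conclusion follows immediately.
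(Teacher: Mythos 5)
Your proposal takes a genuinely different route from the paper, and it has a real gap at its centre. The paper follows Brezis and Cabr\'e's original two-step scheme and deliberately \emph{avoids} constructing or estimating the Green function. In Step I, the paper covers a compact $K\Subset\Omega$ by finitely many balls $B_r(x_i)$, solves $\mathcal{L}u_i=\chi_{B_r(x_i)}$, obtains $u_i(x)\geq C\delta(x)^s$ from the Hopf lemma of \cite{AC2023}, and combines this with a mean-value type lower bound $v(x)\geq C\int_{B_{2r}(x)}v$ and integration by parts to deduce $v(x)\geq C\int_\Omega h\,\delta^s$ on $K$. In Step II it propagates this interior estimate to $\Omega\setminus K$ by comparing $v$ against an auxiliary solution $w$ of $\mathcal{L}w=0$ in $\Omega\setminus K$ with $w=1$ on $\partial K$, again via the Hopf lemma and the \emph{veryweak} maximum principle (Lemma \ref{eq6.15A}). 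Nowhere is a pointwise lower bound on the Green function itself required.

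Your argument, by contrast, reduces the lemma to the bilateral Green function estimate $G(x,y)\geq C_0\,\delta(x)\delta(y)$. This is the crux, and you do not prove it: you invoke a boundary Harnack principle for the mixed operator $-\Delta+(-\Delta)^s$ to recover the correct $\delta(y_0)$-dependence of the Hopf constant $c(y_0)$, but no such boundary Harnack principle is established in the paper, in \cite{SVWZ2023}, or in \cite{AC2023}, and it is a substantive result in its own right. You flag this honestly (``the true technical heart of the proof''), but that is precisely the point: the step you defer is harder than the lemma you are trying to prove, and it is not available in the toolbox the paper has built. The Brezis--Cabr\'e argument is designed exactly to sidestep this. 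The solutions $u_i$ to $\mathcal{L}u_i=\chi_{B_r(x_i)}$ play the role of regularized Green functions, but only their \emph{one-sided} $\delta(x)^s$ lower bound (pure Hopf, no boundary Harnack, no $\delta(y)$-scaling) is ever needed; the $\delta(y)$ weight on the right-hand side then enters trivially because $u_i(y)\geq C\delta(y)^s\geq C\delta(y)$ under the integral. To make your route rigorous you would need to supply the boundary Harnack principle (or an equivalent Martin-boundary argument) for $\mathcal{L}$, which would be a substantial addition; short of that, the proposal has a genuine gap, whereas the paper's two-step comparison argument closes the lemma with tools already at hand.
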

\begin{proof}
We follow Brezis and Cabr\'e \cite[Lemma 3.2]{BC1998}. We divide the proof into two steps. In the first part, we prove the result on every compact subset of $\Omega$. Next using the maximum principle Lemma \ref{eq6.15A}, we prove it for $\Omega$.\\
\textbf{Step I:} For each compact subset $K\subset \Omega$, we have 
    \begin{equation}\label{eq6.20}
    v(x)\geq C \int_\Omega h \delta,~\forall~ x\in K.
    \end{equation}
   Define, $r=\frac{1}{2}\operatorname{dist}(K,\partial \Omega)$. Since $K$ is compact, it is totally bounded, then there exists $m$ balls of radius $r$ such that $x_1,x_2,.....x_m \in K$ and
    $$K \subset B_r(x_1)\cup B_r(x_2) \cup ..... \cup B_r(x_m) \subset \Omega.$$
    Let $u_1,u_2,....u_m$ be the $\it{veryweak}$ solutions to the following problem, 
     \begin{equation}\label{eq6.18L}
        \begin{aligned}
            -\Delta u_i+ (-\Delta)^s u_i & =\chi_{B_r(x_i)} ~ \,\, \text{in}~\,\, \Omega,\\
            u_i & = 0 ~ \, \text{in}~\,\, \mathbb R^N \setminus \Omega,
        \end{aligned}
    \end{equation}
    where $\chi_A$ denotes the characteristic function of a set $A$. By Hopf boundary lemma \cite{AC2023}, there exists a constant $C>0$ such that $$u_i(x)\geq C\delta(x)^s~\forall~ x\in \Omega,~\forall~ 1\leq i\leq m.$$
   Suppose $x\in K$. Then there exists $x_i\in K$ such that $x\in B_r(x_i) \subset B_{2r}(x)\subset \Omega$. Therefore,  using the fact $ -\Delta v+ (-\Delta)^s v \geq 0$ in $\Omega$ and the integration by parts, we get
    \begin{align*}
        v(x)&\geq C\int_{B_{2r}(x)} v \\
        &\geq C\int_{B_{r}(x_i)} v\\
        &= C \int_\Omega u_i(-\Delta+ (-\Delta)^s) v\\ 
        &= C \int_\Omega v(-\Delta+ (-\Delta)^s) u_i\\
        &= C \int_\Omega u_i h \\ 
        & \geq  C \int_\Omega \delta(x)^s h. 
    \end{align*}
  Therefore, for any compact set $K\subset \Omega$, we have $v(x)\geq C \int_\Omega h \delta^s,~ \forall~ x\in K.$\\  
  \textbf{Step II:} Let us fix a compact set $K\subset \Omega$ with smooth boundary $\partial K$. From the equation \eqref{eq6.20}, we get $v(x)\geq C \int_\Omega h \delta^s$ in $K$. Thus, it remains to prove   \eqref{Brezis inq} in $\Omega \setminus K.$ Now consider the problem, 
 \begin{equation}\label{eq6.22}
        \begin{aligned}
            -\Delta w+ (-\Delta)^s w & =0 ~ \,\, \text{in}~\,\, \Omega \setminus K,\\ 
            w & = 0 ~ \, \text{in}~\,\, \mathbb R^N \setminus \Omega, \\ 
            w & =1 ~\, \text{on}~\,\, \partial K.
        \end{aligned}
    \end{equation}
    Let $w$ be a $\it{veryweak}$ solution to \eqref{eq6.22}. Then by the Hopf boundary lemma \cite{AC2023}, we obtain
$$w(x)\geq C \delta(x)^s~\forall\, x\in \Omega \setminus K.$$
From \eqref{last eq} and \eqref{eq6.22}, we deduce
\begin{align}\label{eq6.23}
     -\Delta (v-\lambda w)+ (-\Delta)^s(v-\lambda w) & \geq 0 ~ \,\, \text{in}~\,\, \Omega \setminus K,\\ 
            (v-\lambda w) & = 0 ~ \, \text{in}~\,\, \mathbb R^N \setminus \Omega,\nonumber
\end{align}
where $\lambda=C\bigg(\int_\Omega h \delta^s \bigg)$. Since $v$ is {\it{mixed-superharmonic}} in the sense Definition \ref{def6.1}, we get $v(x)\geq \lambda w(x)$ on $\partial K$. Therefore, applying Lemma \ref{eq6.15A} to the problem \eqref{eq6.23} and using $\delta^s(x)>\delta(x)$, we get
\begin{align}\label{eq6.24}
v(x)-\lambda w(x) &\geq 0~ \forall\, x \,\in \Omega\setminus K. \nonumber
\end{align}
Thus, we have $v(x) \geq C \lambda\delta(x)^s \geq C\lambda \delta(x),~\forall~x \,\in \Omega\setminus K.$ Therefore, we obtain
\begin{align}
    \frac{v(x)}{\delta(x)}\geq C \int_\Omega h \delta^s \geq C \int_\Omega h \delta,~ \forall~x\in \Omega\setminus K.
\end{align}
This completes the proof.
\end{proof}
\noindent We conclude this section with the following weak comparison principle.
\begin{lemma}\label{eq6.wcp}
    Let $u, v\in X_0^{s,2}(\Omega)$ and $f>0$ in $\Omega$. Suppose 
     \begin{align}\label{eq6.wcpineq}
          -\Delta u+ (-\Delta)^s u -\frac{f}{u^{\gamma}} & \geq -\Delta v+ (-\Delta)^s v -\frac{f}{v^{\gamma}} ~ \,\, \text{weakly in}~\,\, \Omega
            \end{align}
            and $u=v=0$ in $\mathbb R^N \setminus \Omega$. Then $u\geq v$ {\it{a.e.}} in $\Omega$.
     \end{lemma}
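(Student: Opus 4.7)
The plan is to employ the standard energy trick of testing the difference of the weak inequalities with $(v-u)^+$, exploiting the monotonicity of $t\mapsto -1/t^\gamma$ on the set $\{v>u\}$.

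First I would subtract the weak inequalities defining \eqref{eq6.wcpineq}. Writing the inequality in weak form against a nonnegative test function $\phi\in X_0^{s,2}(\Omega)$ (which is admissible by density of $C_c^\infty(\Omega)$), one obtains
\begin{equation}\label{eq:plandiff}
\int_\Omega \nabla(u-v)\cdot\nabla\phi\,\mathrm d x + \int_{\mathbb R^N}\!\!\int_{\mathbb R^N}\frac{((u-v)(x)-(u-v)(y))(\phi(x)-\phi(y))}{|x-y|^{N+2s}}\,\mathrm d x\,\mathrm d y \geq \int_\Omega f\phi\left(\frac{1}{u^\gamma}-\frac{1}{v^\gamma}\right)\mathrm d x.
\end{equation}
Next I would verify that $\phi := (v-u)^+ \in X_0^{s,2}(\Omega)$ is an admissible test function. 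Since $u,v\in X_0^{s,2}(\Omega)$ and both vanish outside $\Omega$, the positive part $(v-u)^+$ also vanishes outside $\Omega$, lies in $W_0^{1,2}(\Omega)$, and its Gagliardo seminorm is controlled by that of $v-u$ (by the elementary pointwise bound $|(v-u)^+(x)-(v-u)^+(y)|\leq |(v-u)(x)-(v-u)(y)|$), hence $\phi\in X_0^{s,2}(\Omega)$.

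Then I would insert $\phi=(v-u)^+$ in \eqref{eq:plandiff} and analyze each side. Setting $w:=u-v$, the local part gives $\nabla w\cdot\nabla w^- = -|\nabla(v-u)^+|^2$. For the nonlocal part, the standard pointwise inequality
\begin{equation*}
(w(x)-w(y))(w^-(x)-w^-(y)) \leq -\left((v-u)^+(x)-(v-u)^+(y)\right)^2
\end{equation*}
(verified by a four-case check on the signs of $w(x),w(y)$) yields that the left-hand side of \eqref{eq:plandiff} is bounded above by
\begin{equation*}
-\int_\Omega|\nabla(v-u)^+|^2\,\mathrm d x - \int_{\mathbb R^N}\!\!\int_{\mathbb R^N}\frac{((v-u)^+(x)-(v-u)^+(y))^2}{|x-y|^{N+2s}}\,\mathrm d x\,\mathrm d y \leq 0.
\end{equation*}
On the other hand, the right-hand side of \eqref{eq:plandiff} is supported on $\{v>u\}$ (since $(v-u)^+$ vanishes elsewhere), and on this set, using $u,v>0$, $f>0$, and the strict monotonicity of $t\mapsto t^{-\gamma}$, one has $\tfrac{1}{u^\gamma}-\tfrac{1}{v^\gamma}>0$, so the RHS is nonnegative.

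Combining the two bounds gives
\begin{equation*}
\|(v-u)^+\|_{X_0^{s,2}(\Omega)}^2 \leq -\int_\Omega f(v-u)^+\left(\frac{1}{u^\gamma}-\frac{1}{v^\gamma}\right)\mathrm d x \leq 0,
\end{equation*}
forcing $(v-u)^+\equiv 0$ and hence $u\geq v$ a.e.\ in $\Omega$. The only delicate point I foresee is the admissibility of $(v-u)^+$ as test function together with the justification that both singular integrals $\int_\Omega f u^{-\gamma}(v-u)^+$ and $\int_\Omega f v^{-\gamma}(v-u)^+$ are finite; this uses that $u,v>0$ in $\Omega$ together with the $L^1$ bound on $fu^{-\gamma},\,fv^{-\gamma}$ coming from the hypothesis that $u,v$ are weak solutions (so that the singular term already appears in their weak formulation). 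Everything else reduces to the sign arithmetic above.
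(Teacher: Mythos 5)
Your proposal is correct and follows essentially the same strategy as the paper's proof: subtract the two weak inequalities, test with $(v-u)^+\in X_0^{s,2}(\Omega)$, use the sign of $\nabla w\cdot\nabla w^-$ for the local part and the algebraic inequality $(A-B)(A^+-B^+)\ge(A^+-B^+)^2$ (which you encode pointwise in the Gagliardo kernel, while the paper phrases it through $(-\Delta)^{s/2}$) for the nonlocal part, and exploit the monotonicity of $t\mapsto t^{-\gamma}$ together with $f>0$ to control the singular term. Your extra remarks on the admissibility of $(v-u)^+$ and the finiteness of the singular integrals fill in details the paper leaves implicit but introduce no new idea.
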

     \begin{proof}
         Let $u, v\in X_0^{s,2}(\Omega)$ and $f>0$ in $\Omega$. Since,  $-\Delta u+ (-\Delta)^s u -\frac{f}{u^{\gamma}} \geq -\Delta v+ (-\Delta)^s v -\frac{f}{v^{\gamma}}$, weakly and  $u=v=0$ in $\mathbb R^N \setminus \Omega$, then $\forall~\phi\in  X_0^{s,2}(\Omega)$, we get
\begin{align}\label{eq6.26}
         \int_\Omega (\nabla u -\nabla v)\cdot\nabla \phi+\int_\Omega ((-\Delta)^\frac{s}{2} u - (-\Delta)^\frac{s}{2} v)\cdot(-\Delta)^\frac{s}{2} \phi-\int_\Omega\left( \frac{1}{u^\gamma}- \frac{1}{v^\gamma}\right)f\phi\geq 0.
\end{align}
Choose, $\phi=(v-u)^+$ in \eqref{eq6.26}. Therefore, using $f>0$, we get 
\begin{align}\label{eq6.27}
         \int_\Omega (\nabla u -\nabla v)\cdot\nabla (v-u)^+ +\int_\Omega ((-\Delta)^\frac{s}{2} u - (-\Delta)^\frac{s}{2} v)\cdot(-\Delta)^\frac{s}{2} (v-u)^+\geq 0.
\end{align}
On using $u=u^+-u^-$, we get
\begin{align}\label{eq6.28}
         \int_\Omega (\nabla u -\nabla v)\cdot\nabla (v-u)^+ &=-\int_\Omega \nabla (v - u)\cdot\nabla (v-u)^+\nonumber\\
         &=-\int_\Omega |\nabla (v - u)^+|^2\nonumber\\
         &\leq 0.
\end{align}
We have the algebraic inequality,
$$(A-B)(A^+-B^+)\geq (A^+-B^+)^2.$$
Therefore, putting $A=v(x)-u(x)$ and $B=v(y)-u(y)$, we obtain
\begin{align}\label{eq6.29}
         \int_\Omega ((-\Delta)^\frac{s}{2} u - (-\Delta)^\frac{s}{2} v)\cdot(-\Delta)^\frac{s}{2} (v-u)^+&\leq 0.
\end{align}
Thus, using \eqref{eq6.27}, \eqref{eq6.28} and \eqref{eq6.29} together with \eqref{eq6.26}, we get
\begin{align*}
         0\leq&\int_\Omega (\nabla u -\nabla v)\cdot\nabla (v-u)^+ +\int_\Omega ((-\Delta)^\frac{s}{2} u - (-\Delta)^\frac{s}{2} v)\cdot(-\Delta)^\frac{s}{2} (v-u)^+\\
         &-\int_\Omega\left( \frac{1}{u^\gamma}- \frac{1}{v^\gamma}\right)f(v-u)^+\leq 0.
\end{align*}
 Therefore, we conclude that the Lebesgue measure of $\{x\in\Omega: v(x)>u(x)\}$ is zero. Hence, we have $v\leq u$  {\it{a.e.}} in $\Omega$. This completes the proof.   
     \end{proof}

\section*{Conflict of interest statement}
\noindent On behalf of the authors, the corresponding author states that there is no conflict of interest.
 \section*{Data availability statement}
\noindent Data sharing does not apply to this article as no datasets were generated or analysed during the current study.

\section*{Acknowledgement}
\noindent Souvik Bhowmick would like to thank the Council of Scientific and Industrial Research (CSIR), Govt. of India for the financial assistance to carry out this research work [grant no. 09/0874(17164)/2023-EMR-I]. SG acknowledges the research facilities available at the Department of Mathematics, NIT Calicut under the DST-FIST support, Govt. of India [project no. SR/FST/MS-1/2019/40 Dated. 07.01.2020].

%\bibliographystyle{plain}
%\bibliography{svik}

\end{document}